\tikzset{
	> = stealth,
	every pin/.style = {pin edge = {}},
	flow/.style = {decoration = {markings, mark=at position #1 with {\arrow{>}}},
		postaction = {decorate}
	},
	flow/.default = 0.5,
	main/.style = {color=#1, line width=0.5pt, line cap=round, line join=round},
	main/.default = black,
	fontscale/.style={font=\relsize{#1}},
}
\newtheorem{theorem}{Theorem}
\newtheorem{proposition}[theorem]{Proposition}
\newtheorem{lemma}[theorem]{Lemma}
\newtheorem{corollary}[theorem]{Corollary}
\theoremstyle{definition}
\newtheorem{definition}[theorem]{Definition}
\theoremstyle{remark}
\numberwithin{equation}{section}
\renewcommand{\epsilon}{\varepsilon}
\newcommand{\N}{\mathbb{N}}
\renewcommand{\phi}{\varphi}
\newcommand{\R}{\mathbb{R}}
\newcommand{\Sph}{\mathbb{S}}
\DeclareMathOperator{\dist}{dist}
\DeclareMathOperator{\ran}{ran}
\let\oldtocsection=\tocsection
\let\oldtocsubsection=\tocsubsection
\let\oldtocsubsubsection=\tocsubsubsection
\renewcommand{\tocsection}[2]{\hspace{0em}\oldtocsection{#1}{#2}}
\renewcommand{\tocsubsection}[2]{\hspace{1em}\oldtocsubsection{#1}{#2}}
\renewcommand{\tocsubsubsection}[2]{\hspace{2em}\oldtocsubsubsection{#1}{#2}}
\newif\ifSolution
\newif\ifNoSolution
\newenvironment{solution}{\ifSolution\begin{proof}
		\else\setbox0\vbox\bgroup\fi}{\ifSolution \end{proof}\else\egroup\fi}
\begin{document}

	\begin{titlepage}
		\huge \title[Degenerate Stability of the CKN-Inequality along the FS-Curve]{Degenerate Stability of the Caffarelli--Kohn--Nirenberg Inequality along the Felli--Schneider Curve} \setcounter{page}{1}
		\vspace{7cm}
	\end{titlepage}
	
	\author{Rupert L.~Frank}
	\author{Jonas W.~Peteranderl}
	
	\address[Rupert L.~Frank]{Mathe\-matisches Institut, Ludwig-Maximilians Universit\"at M\"unchen, The\-resienstr.~39, 80333 M\"unchen, Germany, and Munich Center for Quantum Science and Technology, Schel\-ling\-str.~4, 80799 M\"unchen, Germany, and Mathematics 253-37, Caltech, Pasa\-de\-na, CA 91125, USA}
	\email{r.frank@lmu.de}
	\address[Jonas W.~ Peteranderl]{Mathematisches Institut, Ludwig-Maximilians Universit\"at M\"unchen, Theresienstr.~39, 80333 M\"unchen, Germany}	\email{peterand@math.lmu.de}
	
	\thanks{\copyright\, 2023 by the authors. This paper may be reproduced, in its entirety, for non-commercial purposes.\\
		Partial support through US National Science Foundation grant DMS-1954995 (R.L.F.), as well as through the Deutsche Forschungsgemeinschaft through Germany’s Excellence Strategy EXC-2111-390814868 (R.L.F.) and through TRR 352 -- Project-ID 470903074 (R.L.F. \& J.W.P.) is acknowledged.}

	\begin{abstract}
		We show that the Caffarelli--Kohn--Nirenberg (CKN) inequality holds with a remainder term that is quartic in the distance to the set of optimizers for the full parameter range of the Felli--Schneider (FS) curve. The fourth power is best possible. This is due to the presence of non-trivial zero modes of the Hessian of the deficit functional along the FS-curve. Following an iterated Bianchi--Egnell strategy, the heart of our proof is verifying a `secondary non-degeneracy condition'. Our result completes the stability analysis for the CKN-inequality to leading order started by Wei and Wu. Moreover, it is the first instance of degenerate stability for non-constant optimizers and for a non-compact domain.
	\end{abstract}
	
	\maketitle
	
	\tableofcontents
	\newpage
	
	\pagenumbering{arabic}\setcounter{page}{2}
	\section{Introduction and main result}
	
	\subsection{The CKN-inequality}
	Caffarelli, Kohn, and Nirenberg \cite{CaKoNi} introduced, among others, the family of functional inequalities
	\begin{equation}\label{CKN}
		\int_{\R^d} \frac{|\nabla v|^2}{|x|^{2a}}\ \mathrm{d} x \geq C_{a,b}	\left(\int_{\R^d} \frac{|v|^q}{|x|^{bq}}\ \mathrm{d}x\right)^{\sfrac{2}{q}} ,
	\end{equation}
	nowadays known as CKN-inequality,
	with dimension $d\in\N$ and parameters $a,b\in\R$ such that \begin{equation}\label{adm}
		a<\operatorname{min}\{0,b\}+\frac{d-2}{2}
		\qquad\text{and}\qquad
		0\leq b-a\leq 1\,.
	\end{equation}
	We will call pairs $(a,b)$ satisfying \eqref{adm} \textit{admissible}. Note that the first condition reduces to $2a<d-2$ in case $d>2$. We use the convention that $C_{a,b}$ denotes the optimal constant in \eqref{CKN}. By a scaling argument, one can show that $q$ has to satisfy
	\begin{equation}\label{q}
		q=\frac{2d}{d-2+2(b-a)} \,.
	\end{equation}
	Note that for admissible $(a,b)$ the exponent $q$ ranges between $2$ and $2^*$. Here $2^*$ denotes the critical Sobolev exponent with $2^*=2d(d-2)^{-1}$ in case $d\geq 3$ and $2^*=\infty$ in case $d=1,2$. In fact, the CKN-inequality contains the classical Sobolev inequality ($a=b=0$, $d\geq3$) as well as the Hardy inequality ($a=0$, $b=1$, $d\geq 3$) as special cases. If $(a,b)$ is admissible, $v$ is allowed to be a function in $\mathcal D^{1}_a(\R^d)$, the completion of $C_c^\infty(\R^d)$ with respect to the norm
	\begin{equation*}
		\left(\int_{\R^d} \frac{|\nabla v|^2}{|x|^{2a}}\ \mathrm{d} x\right)^{\sfrac{1}{2}}.
	\end{equation*}
	Horiuchi \cite{Ho} ($d\geq 2$) and Catrina and Wang \cite{CaWa} ($d\geq 1$) were able to complete the existing results on whether the optimal constant for \eqref{CKN} is attained. Indeed, among all admissible $(a,b)$, an affirmative result was proved in case
	\begin{equation}\label{adm2}
		0<b-a<1 
		\qquad\text{or}\qquad
		b=a\geq0 \,,
	\end{equation}which is sharp. We will call admissible pairs $(a,b)$ satisfying \eqref{adm2} \textit{attainable} and denote the set of optimizers of \eqref{CKN} by
	\[\mathcal Z\coloneqq \{v\in \mathcal D^{1}_a(\R^d): \text{\eqref{CKN} becomes an equality}\}.\] 
	If we restrict \eqref{CKN} to radial functions, that is, functions that only depend on the radial coordinate, and call the corresponding optimal constant $ C_{a,b}^*$, then, obviously, $C_{a,b}\leq C_{a,b}^*$. We will call an admissible pair $(a,b)$ \textit{symmetric} if $C_{a,b}=C_{a,b}^*$. Otherwise, symmetry breaking is said to occur. The constant $C_{a,b}^*$ can be determined explicitly, and, for $a\leq b<a+1$, the set of radial optimizers is given by \begin{equation}\label{radopt}
		\left\{\lambda\frac{\mu^{\sqrt{\Lambda}}(2q\Lambda)^{\frac{1}{q-2}}}{(1+|\mu \cdot|^{\sqrt{\Lambda}(q-2)})^{\frac{2}{q-2}}}\right\}_{\mu>0, \lambda\in\R} \text{ with } \Lambda\coloneqq \left(\frac{d-2-2a}{2}\right)^2>0
	\end{equation} 
	(see \cite[p. 236 f.]{CaWa}). This set agrees with $\mathcal Z$ in case $(a,b)$ is symmetric \cite{DoEsLo2}. The only exception is $(a,b)=(0,0)$, where the set of optimizers contains, in addition, the translates of functions in \eqref{radopt}.
	
	Admissible pairs $(a,b)$ with $a\geq 0$ are well-known to be symmetric (see, e.g., \cite{Ta,Au}, \cite{Lieb}, and \cite{ChCh}), so, if symmetry breaking occurs, then necessarily $a<0$. The fact that symmetry breaking does occur for some parameters was observed by Catrina and Wang \cite{CaWa}. Thereafter, Felli and Schneider found an explicit curve that encloses a region where symmetry breaking occurs; see \cite[Corollary 1.2]{FeSc} for $d\geq 3$ and \cite[Theorem 1.1]{DoEsTa} for $d=2$.
	More specifically, among all attainable $(a,b)$, they showed that the pair $(a,b)$ is \textit{not} symmetric if
	\begin{equation}\label{symbreak}
		\Lambda>4 \, \frac{d-1}{q^2-4}\eqqcolon \Lambda_{FS} \,.
	\end{equation}
	Note that for fixed dimension $d$, $\Lambda= \Lambda_{FS}$ describes a curve in the $(a,b)$-plane since $q=q(a,b)$ and $\Lambda=\Lambda(a)$ are parametrized by $(a,b)$ and $a$, respectively. The condition \eqref{symbreak} is trivially satisfied if $d=1$, and hence symmetry is broken for all attainable $(a,b)$ in this case, which is in line with \cite[Theorem 7.2]{CaWa}. After various partial results \cite{SmWi, LiWa,DoEsLoTa, DoEsLo}, Dolbeault, Esteban, and Loss \cite[Theorem 1.1]{DoEsLo2} were able to settle the longstanding conjecture on the optimal symmetry range by proving that the value $\Lambda_{FS}$ indeed separates the symmetry from the symmetry breaking region. More concretely, among all attainable $(a,b)$, they proved that the pair $(a,b)$ is symmetric if and only if
	\begin{equation}\label{adm3}
		\Lambda\leq \Lambda_{FS}\,.
	\end{equation}
	In fact, they established that \textit{all} optimizers are radial in the symmetric case. Assuming $a<0$, the case of equality in \eqref{adm3} then determines the FS-curve. At this point, let us briefly mention that being attainable and being symmetric are not disjoint properties. Indeed, we have $C_{a,a+1}=C^*_{a,a+1}$ by continuity in case $d\geq 2$ \cite[Theorem 1.1(i), Theorem 7.5(i), Remark 3.4]{CaWa}, so $(a,b)$ is symmetric for $b=a+1$ but not attainable. (Note that the formula of $C_{a,b}^*$ given in \cite[Eq.~(2.13)]{CaWa} holds for $d=2$ as well.) If $d=1$, this follows from \cite[p. 254]{CaWa} as $C_{a,b}^*-C_{a,b}\to 0$ for $1+a-b\to 0$. \
	On the other hand, $C_{a,a}<C^*_{a,a}$ as computed in \cite[Proof of Theorem 1.3(ii)]{CaWa}, so admissible $(a,b)$ with $b=a$ are neither symmetric nor attainable.
	
	\subsection{Stability for the CKN-inequality -- our main result}
	
	In this paper we are interested in the question of stability, that is, whether the closeness to $1$ of the quotient of the two sides in \eqref{CKN} for some $v$ implies the closeness of $v$ to the set $\mathcal Z$. 
	
	For the Sobolev inequality ($a=b=0$, $d\geq3$), this question was raised by Brezis and Lieb \cite{BrLi}. Bianchi and Egnell \cite{BiEg} gave an affirmative answer with the gradient $L^2$-norm $\|\nabla \cdot\|_{L^2(\R^d)}$ as a measure for the distance to the set of optimizers. They showed that this distance vanishes at least quadratically in the difference between $1$ and the quotient of the two sides in \eqref{CKN}. Bianchi and Egnell introduced a very robust technique, which has been adapted to many other functional inequalities; see, for instance, \cite{RaSmWi} for the Hardy--Sobolev inequality or \cite{ChFrWe} for the fractional Sobolev inequality. This technique is based on two ingredients, namely, a compactness theorem for optimizing sequences and a spectral analysis around an optimizer. For a recent quantitative variant of the basic Bianchi--Egnell argument, leading to an optimal dependence of the stability constants, we refer to \cite{DoEsFiFrLo}. Further progress on related questions can be found in \cite{ChLuTa,Ko,Ko2}. For an introduction to the Sobolev inequality and its stability, see \cite{Fr2}. A quantitative version of the Hardy inequality ($a=0$, $b=1$, $d\geq 3$) appeared in \cite{CiFe}.
	
	For the CKN-inequality, a Bianchi--Egnell-type stability inequality was recently shown by Wei and Wu \cite{WeWu} in the interior of the symmetric regime ($\Lambda<\Lambda_{FS}$); see also \cite{RaSmWi} for an earlier contribution in case $a=0$. 
	
	Our main result is a stability inequality on the boundary of the symmetric regime, that is, on the FS-curve $\Lambda=\Lambda_{FS}$. Remarkably, while the Wei--Wu result in the interior of the symmetric regime involves a remainder term \emph{quadratic} in the distance to the set of optimizers, our bound will involve a remainder term that is \emph{quartic} in this distance. We will also show that this quartic vanishing is best possible. The reason is that in the spectral analysis part of the Bianchi--Egnell strategy additional zero modes appear, namely, zero modes that do not come from symmetries of the problem. [In passing, we note an inaccuracy in \cite{WeWu}; their quadratic stability result only holds in the parameter range \textit{excluding} the FS-curve, as the inequality \cite[Eq.~(4.4)]{WeWu} breaks down due to the existence of non-trivial zero modes. This was also noticed in \cite{DeTi2}.]
	
	\begin{theorem}[Degenerate stability of the CKN-inequality along the FS-curve]\label{thm}
		Let $(a,b)\in\R^2$ satisfy $a<0$ and $\Lambda=\Lambda_{FS}$ with $d\geq 2$ and $q$ given by \eqref{q}.
		Then there is a constant $ c(q,d)>0$ such that for all $v\in \mathcal D^{1}_a (\R^d)$,
		\begin{equation*}
			\int_{\R^d}\frac{|\nabla v|^2}{ |x|^{2a}}\ \mathrm{d} x-	C_{a,b}\left(	\int_{\R^d} \frac{|v|^q}{ |x|^{bq}}\ \mathrm{d} x\right)^{\sfrac{2}{q}}\geq  c(q,d)\inf_{\chi\in\mathcal Z}\left(\int_{\R^d}\frac{|\nabla (v-\chi)|^2}{ |x|^{2a}}\ \mathrm{d} x\right)^{2}\left(\int_{\R^d}\frac{|\nabla v|^2}{ |x|^{2a}}\ \mathrm{d} x\right)^{-1}.
		\end{equation*}
		Moreover, the inequality is best possible with respect to the quartic vanishing of the distance to $\mathcal Z$, that is, there is a sequence $(v_n)_n\subset\mathcal D^1_a(\R^d)\setminus\{0\}$ with 
		$$
		\lim_{n\to\infty} \frac{ \int_{\R^d}\frac{|\nabla v_n|^2}{ |x|^{2a}}\ \mathrm{d} x}{ \left(	\int_{\R^d} \frac{|v_n|^q}{ |x|^{bq}}\ \mathrm{d} x\right)^{\sfrac{2}{q} } } = C_{a,b}
		$$
		and
		$$
		\limsup_{n\to\infty} \frac{\int_{\R^d}\frac{|\nabla v_n|^2}{ |x|^{2a}}\ \mathrm{d} x-	C_{a,b}\left(	\int_{\R^d} \frac{|v_n|^q}{ |x|^{bq}}\ \mathrm{d} x\right)^{\sfrac{2}{q}}}{\inf_{\chi\in\mathcal Z}\left(\int_{\R^d}\frac{|\nabla (v_n-\chi)|^2}{ |x|^{2a}}\ \mathrm{d} x\right)^{2}\left(\int_{\R^d}\frac{|\nabla v_n|^2}{ |x|^{2a}}\ \mathrm{d} x\right)^{-1}} < \infty \,.
		$$
	\end{theorem}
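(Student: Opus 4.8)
The plan is to carry out an \emph{iterated} Bianchi--Egnell argument in Emden--Fowler variables; the reduction to a finite-dimensional problem is standard, and the real work is an explicit computation verifying a positivity ("secondary non-degeneracy") condition.

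\medskip
\textbf{Reduction to the cylinder and to a neighbourhood of $\mathcal Z$.} Writing $v(x)=|x|^{-\sqrt\Lambda}u(-\log|x|,x/|x|)$ turns \eqref{CKN} into the equivalent inequality
\begin{equation*}
\int_{\R\times\Sph^{d-1}}\!\bigl(|\partial_s u|^2+|\nabla_\omega u|^2+\Lambda u^2\bigr)\,\mathrm ds\,\mathrm d\omega\ \geq\ C_{a,b}\Bigl(\int_{\R\times\Sph^{d-1}}\!|u|^q\,\mathrm ds\,\mathrm d\omega\Bigr)^{2/q},
\end{equation*}
the identity \eqref{q} being exactly what makes the weight disappear on the right-hand side; here $\mathcal Z$ is carried onto the two-parameter family $\mathcal M=\{c\,u_\star(\cdot-\sigma):c\in\R,\ \sigma\in\R\}$ with $u_\star(s)=c_\star(\cosh\alpha s)^{-2/(q-2)}$, $\alpha=\tfrac{(q-2)\sqrt\Lambda}{2}$. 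Write $\delta[u]$ for the corresponding (scale-invariant, $2$-homogeneous) deficit on the cylinder and $\dist(\cdot,\mathcal M)$ for the distance in the Dirichlet norm. A concentration--compactness analysis of $\delta$, resting on the classification of optimizers in \cite{DoEsLo2} (applicable since $\Lambda=\Lambda_{FS}$ still lies in the symmetric regime $\Lambda\le\Lambda_{FS}$), together with $2$-homogeneity, reduces the theorem to the local statement: there are $\epsilon_0,c_0>0$ with $\delta[u]\ge c_0\,\dist(u,\mathcal M)^4$ whenever $u$ is normalized and $\dist(u,\mathcal M)\le\epsilon_0$. For such $u$ we use translations, homogeneity and the implicit function theorem to write $u=u_\star+\phi$ with $\phi$ orthogonal to the tangent space $T=\operatorname{span}\{u_\star,u_\star'\}$ and $\|\phi\|\sim\dist(u,\mathcal M)$.

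\medskip
\textbf{The Hessian and its degeneracy.} Expanding $\delta[u_\star+\phi]=\tfrac12Q_2[\phi]+\tfrac16Q_3[\phi]+\tfrac1{24}Q_4[\phi]+o(\|\phi\|^4)$ (the linear term vanishes because $u_\star$ is a critical point) and decomposing $\phi=\sum_{k\ge0}\phi_k(s)Y_k(\omega)$ into spherical harmonics, $-\Delta_\omega Y_k=\lambda_k Y_k$, $\lambda_k=k(k+d-2)$, the form $Q_2$ is block-diagonal: on block $k$ it is, up to a positive rank-one term present only for $k=0$, the quadratic form of the Schr\"odinger operator $L_k=-\partial_s^2+\Lambda+\lambda_k-(q-1)V_\star$ with P\"oschl--Teller potential $V_\star\propto(\cosh\alpha s)^{-2}$ normalized by $(-\partial_s^2+\Lambda)u_\star=V_\star u_\star$. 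Differentiating the Euler--Lagrange equation gives $L_0\ge0$ with kernel $T$ (the scaling and translation modes); a direct computation shows that $\psi_\star(s):=(\cosh\alpha s)^{-q/(q-2)}\propto u_\star^{q/2}$ is the ground state of $L_1$ with eigenvalue $(d-1)-\Lambda\tfrac{q^2-4}{4}$, which is $\ge0$ with equality exactly on the FS-curve; and $L_k$ is positive with a spectral gap for $k\ge2$. Hence on the FS-curve $Q_2\ge0$ has, besides $T$, the \emph{non-trivial} kernel $K_0=\operatorname{span}\{\psi_\star Y_1^{(1)},\dots,\psi_\star Y_1^{(d)}\}$, and $Q_2$ is coercive on the orthogonal complement of $T\oplus K_0$.

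\medskip
\textbf{The effective quartic and its positivity.} Since $\delta$ is invariant under rotations and reflections of $\Sph^{d-1}$, we may reduce to $\phi=\epsilon\,\psi_\star\omega_1+w$ with $w\perp T\oplus K_0$ and minimize $\delta$ over $w$ for fixed $\epsilon$. As $Q_2$ annihilates $\psi_\star\omega_1$ and $\int_{\Sph^{d-1}}\omega_1^3\,\mathrm d\omega=0$ (so the pure cubic in the zero-mode direction vanishes), the minimizing $w$ has size $\epsilon^2$, $w=\epsilon^2\widetilde w+O(\epsilon^3)$, where $\widetilde w$ solves in $(T\oplus K_0)^\perp$ the linear equation $Q_2(\widetilde w,\cdot)=-\tfrac12Q_3(\psi_\star\omega_1,\psi_\star\omega_1,\cdot)$; substituting back and using the reflection $\omega_1\mapsto-\omega_1$ to kill all odd powers of $\epsilon$ yields $\inf_w\delta[u_\star+\epsilon\psi_\star\omega_1+w]=\mathcal B\,\epsilon^4+O(\epsilon^6)$ with
\begin{equation*}
\mathcal B:=\tfrac1{24}\,Q_4[\psi_\star\omega_1]-\tfrac18\bigl\langle P g,\,(Q_2|_{(T\oplus K_0)^\perp})^{-1}P g\bigr\rangle,
\end{equation*}
$g$ the $L^2$-representative of $\phi\mapsto Q_3(\psi_\star\omega_1,\psi_\star\omega_1,\phi)$ and $P$ the projection onto $(T\oplus K_0)^\perp$. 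The secondary non-degeneracy condition to be verified is $\mathcal B>0$. Granting it, absorbing the cross-terms (legitimate since $\epsilon,\|w\|$ are small) gives $\delta[u]\gtrsim\|w\|^2+\epsilon^4\gtrsim(\epsilon^2+\|w\|^2)^2\sim\dist(u,\mathcal M)^4$, which is the inequality; and the sequence corresponding to $u_\star+\tfrac1n\psi_\star\omega_1$, for which $\delta\sim\mathcal B\,n^{-4}$ while $\dist(\cdot,\mathcal Z)\sim n^{-1}$ (because $\psi_\star\omega_1\perp T$), shows the quartic exponent is optimal.

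\medskip
\textbf{Main obstacle.} Everything above follows the by-now standard iterated scheme; the substantive step is proving $\mathcal B>0$ for all admissible $(q,d)$ on the FS-curve. Because $\omega_1^2=\tfrac1d+(\omega_1^2-\tfrac1d)$ splits into the $k=0$ and $k=2$ sectors, $g$ lives only in those two blocks, so $(Q_2|_{(T\oplus K_0)^\perp})^{-1}g$ reduces to solving $L_0 y_0=g_0$ on $\{u_\star,u_\star'\}^\perp$ and $L_2 y_2=g_2$, with right-hand sides that are explicit combinations of powers of $\cosh\alpha s$; these P\"oschl--Teller equations can be integrated in closed form (the sources turn out to be $L_j$ applied to combinations of $u_\star$, $s\,u_\star'$ and $u_\star\log\cosh\alpha s$), while $Q_4[\psi_\star\omega_1]$ reduces to Beta-function values and elementary sphere integrals. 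Assembling all of this turns $\mathcal B$ into an explicit closed-form expression in $q$ (with $d$ linked to $q$ through $\Lambda=\Lambda_{FS}$), and the final, delicate point is to prove that this expression is strictly positive throughout $q\in(2,2^*)$. This positivity---where the very explicit $\cosh$-form of the CKN optimizers, and hence the solvability of the underlying P\"oschl--Teller spectral problem, is indispensable---is the heart of the argument.
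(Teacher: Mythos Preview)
Your overall strategy coincides with the paper's: Emden--Fowler reduction to the cylinder, identification of the non-trivial kernel $K_0=\operatorname{span}\{u_\star^{q/2}\omega_i\}$ via the P\"oschl--Teller spectrum, spherical-harmonic decomposition reducing the secondary non-degeneracy check to the $k=0$ and $k=2$ blocks, and a completion-of-the-square computation of $\mathcal B$. However, two of your claimed simplifications hide the real work and are not correct as stated.

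First, the quartic Taylor expansion $\delta[u_\star+\phi]=\tfrac12Q_2[\phi]+\tfrac16Q_3[\phi]+\tfrac1{24}Q_4[\phi]+o(\|\phi\|^4)$ is not valid when $2<q<4$, since $t\mapsto|t|^q$ is not four times differentiable at the origin and $\phi/u_\star$ is not uniformly small (as $u_\star$ decays exponentially). The paper addresses this by splitting the domain of integration according to whether $|R_n|<u_\star^{q/2}$ or $|R_n|\ge u_\star^{q/2}$: on the first set the perturbation is pointwise dominated by a power of $u_\star$ and a genuine fourth-order expansion is available; on the second set one expands only to second order and controls the remainder using $u_\star^{q/2}\le|R_n|$. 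This is a necessary technical device, not a formality.

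Second, and more seriously, your assertion that the $k=2$ equation $L_2y_2=g_2$ ``can be integrated in closed form'' with sources that are ``$L_j$ applied to combinations of $u_\star$, $s\,u_\star'$ and $u_\star\log\cosh\alpha s$'' does not hold. The paper explicitly records that no simple closed form was found for the $k=2$ solution (even in the model case $\mu_n=0$); instead it constructs a power-series solution in $\cosh^{-2}(\alpha s)$ whose coefficients are products of Pochhammer symbols, and evaluates the boundary derivative at $s=0^+$ via an Abelian theorem. Consequently $\mathcal B$ is \emph{not} a closed-form expression but contains an infinite series $\sum_{k\ge0}(P(k-\xi)-P(k))$ for an explicit ratio $P$ of Gamma functions. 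The positivity $\mathcal B>0$ is then obtained by proving that $P$ is strictly convex on $[-1,\infty)$ (through a sign analysis of Digamma and Trigamma functions), which covers all parameters except $d=2$, $2<q\le2.8$; that residual strip is handled separately by a cruder bound using only the operator norm of $(L_2)^{-1}$. By contrast, the $k=0$ equation \emph{is} solved explicitly by a combination $Ku_\star^{q-1}+Lu_\star$, roughly as you suggest---but this mechanism does not extend to $k=2$.
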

	
	This theorem establishes the stability of the CKN-inequality along the FS-curve, but only in a degenerate sense, where the distance to the set of optimizers vanishes faster than quadratically; compare \eqref{van}. The interest in such degenerate stability of functional inequalities has been raised recently through a work by Engelstein, Neumayer, and Spolaor \cite{EnNeSp}, who investigated the quantitative stability of the Yamabe problem for closed Riemannian manifolds. While non-degenerate stability (with a square of the distance to the set of optimizers) was proved for manifolds of a generic type, it was also shown that the manifold
	\begin{equation}\label{SchoenEx}
		\mathbb S^1((d-2)^{-\sfrac{1}{2}})\times \mathbb S^{d-1}(1)
	\end{equation}
	with its standard product metric exhibits only degenerate stability, namely, with an (unspecified) power of the distance that is strictly larger than two. This example builds upon work by Schoen \cite{Sc}. Recently, it was shown \cite{Fr} that in the example \eqref{SchoenEx} the sharp stability exponent is four. The same phenomenon was observed in other types of Sobolev-type inequalities. Let us stress that the example \eqref{SchoenEx} indeed describes a degenerate scenario since the stability becomes non-degenerate -- that is, a stability inequality with a quadratic distance to the optimizers -- when varying the radius of the one-dimensional sphere in \eqref{SchoenEx}; see \cite{Fr2} for more details. Similarly, as we show in this paper, while non-degenerate stability was shown for the CKN-inequality in the interior of the symmetry region \cite{WeWu}, only the weaker notion of degenerate stability with a fourth power in the distance is available along the FS-curve. Therefore, our result proves a loss of stability and highlights the phase transition occurring due to symmetry breaking.
	
	The underlying mechanism for degenerate stability in \cite{Fr} and in the present paper is similar. It is caused by the presence of zero modes of the Hessian of the deficit functional that do not come from symmetries of the problem. As we will explain below, there are various features of the CKN-setting that make the present analysis substantially harder than the one in \cite{Fr}.
	
	We emphasize that the degeneracy along the FS-curve occurs only on a finite-dimensional subspace of $\mathcal D^{1}_a (\R^d)$, and hence an actual stronger stability result holds, with right side proportional to
	\begin{equation*}
		\inf_{\chi\in\mathcal Z}\left(\int_{\R^d}\frac{|\nabla (\Pi_d v-\chi)|^2}{ |x|^{2a}}\ \mathrm{d} x\right)^{2}\left(\int_{\R^d}\frac{|\nabla (\Pi_dv)|^2}{ |x|^{2a}}\ \mathrm{d} x\right)^{-1}+	\inf_{\chi\in\mathcal Z}\left(\int_{\R^d}\frac{|\nabla (\Pi_d^\perp v-\chi)|^2}{ |x|^{2a}}\ \mathrm{d} x\right),
	\end{equation*}
	where $\Pi_d$ is the orthogonal projection in $H^1(\mathcal C)$ onto the $d$-dimensional subspace of non-trivial zero modes and $\Pi_d^\perp\coloneqq1- \Pi_d$. (For the precise definition of non-trivial zero modes, we refer to Subsection \ref{subsec1.4}.) This follows by a slight modification of our proof, as in \cite{Fr2}. Such a mix of quadratic and quartic stability was first observed by Brigati, Dolbeault, and Simonov \cite{BrDoSi} in the setting of the log-Sobolev inequality on the sphere, which is yet another example of a degenerately stable functional inequality.
	
	Let us conclude this subsection by highlighting in which respect the present paper goes beyond the above mentioned works on degenerate stability. An obvious difference is that, in contrast to the inequalities covered in \cite{Fr}, the CKN-inequality contains integrals over a non-compact domain, and that the optimizers are non-constant functions. This leads to several technical complications -- the crucial one being the verification of a certain secondary non-degeneracy condition, which we will describe in detail below. This part of the proof, whose analogue in the case of constant minimizers in \cite{Fr} follows by a straightforward computation, is one of our main achievements here and takes up a significant part of this paper. It involves a series solution of a certain inhomogeneous second order equation and then the verification of certain positivity properties of an infinite series; see Section \ref{sec:solving} and the proof of Proposition~\ref{prop3}. We stress that our treatment is fully analytical and does not rely on numerical assistance. Finally, we want to stress a novel approach to deal with the quartic order expansion for the $L^q$-norm when $2<q<4$, which would simplify and unify the different ad-hoc approaches employed in \cite{Fr}. We hope this will be useful in other related works on degenerate stability.
	
	
	\subsection{A reformulation} \label{subsec1.2}
	As is common practice, we employ logarithmic coordinates to transform \eqref{CKN} to a Sobolev inequality on the cylinder $\mathcal{C}\coloneqq\R\times \mathbb S^{d-1}$ without weights. With the so-called Emden--Fowler transformation
	\begin{equation*}
		v(r,\omega)=r^{a-\frac{(d-2)}{2}}  \phi(s,\omega)\,,
	\end{equation*}
	where $r=|x|$, $s=-\log r$, and $\omega=x/r$, we can write \eqref{CKN} as 
	\begin{equation}\label{CKNEF}
		\|\partial_s \phi\|^2_{L^2(\mathcal{C})}+\|\nabla_\omega \phi\|^2_{L^2(\mathcal{C})}+\Lambda\|\phi\|^2_{L^2(\mathcal{C})}\geq C_{a,b} \| \phi\|^2_{L^q(\mathcal{C})}
	\end{equation}
	for $\phi\in H^1(\mathcal{C})$. Here $\nabla_\omega$ denotes the gradient on $\mathbb S^{d-1}$. Via logarithmic variables the scaling invariance of the CKN-inequality turns into the translation invariance of the Sobolev inequality on the cylinder. Note that \eqref{CKNEF} is an equality if and only if \eqref{CKN} is, and, calling functions on $\mathcal C$ that depend only on $s$ \emph{radial}, the results on symmetry and symmetry breaking carry over to \eqref{CKNEF} as well.
	
	For symmetric, attainable $(a,b)\neq (0,0)$, equality in \eqref{CKNEF} is attained if and only if $\phi$ equals (up to a scalar multiple and a translation) the radial function
	\begin{equation*}
		u\coloneqq \beta (\cosh(\alpha\, \ \cdot \ ))^{-\frac{2}{q-2}}\,,\qquad \alpha\coloneqq \frac{q-2}{2} \sqrt{\Lambda}\,,\qquad\beta\coloneqq \left(\frac{q}{2}\Lambda\right)^{\frac{1}{q-2}};
	\end{equation*} see \cite[Cor.~1.3]{DoEsLo2} for a reference. By the Emden--Fowler transformation, the set $\mathcal Z$ is cast to
	\[\mathcal{M}\coloneqq \{\lambda u(\ \cdot - t)\}_{\lambda,t\in\R}\,,\]
	the set of optimizers for \eqref{CKNEF}. Some authors neglect the non-regular value $\lambda=0$ to obtain a differentiable manifold \cite{BiEg} or drop the multiplication by a scalar multiple entirely \cite{FeSc}. For our purposes, it is convenient to keep the full set of optimizers when deriving orthogonality relations.
	
	The values $\alpha$ and $\beta$ in the definition of $u$ are chosen such that $u$ is the unique even, positive function solving the Euler--Lagrange equation 
	\begin{equation}\label{EL}
		-\partial_s^2  u+ \Lambda u =u^{q-1}
	\end{equation} 
	on $\mathcal C$. We will use this and the following related equations frequently:
	\begin{align}
		\left( -\partial_s^2+ \Lambda-(q-1)u^{q-2} \right) \partial_s u &=0\,,\label{EL1}\\
		\left(-\partial_s^2 + \frac{q^2}{4}\Lambda-(q-1)u^{q-2}\right) u^{\sfrac{q}{2}} &=0\,.\label{EL2}
	\end{align}
	
	In the following, the norm in $L^q(\mathcal C)$ is denoted $\|\cdot\|_q$. In $H^1(\mathcal C)$ we use the ($\Lambda$-dependent) norm
	$$
	\|\cdot\| \coloneqq \left( \|\partial_s\cdot\|_2^2+\|\nabla_\omega \cdot\|_2^2+\Lambda \|\cdot\|_2^2 \right)^{\sfrac12}.
	$$
	The inner products in $L^2(\mathcal C)$ and $H^1(\mathcal C)$ are $\langle\cdot,\cdot\rangle_2$ and $\langle\cdot,\cdot\rangle$, respectively. Moreover, we will consider $a$, $b$, and $d$ satisfying \eqref{adm}, \eqref{adm2}, and \eqref{adm3}, that is, $(a,b)$ is admissible, attainable, and symmetric. As an immediate consequence, we know that $2<q<2^*$ and $d\geq 2$. In particular, the assumption $d\geq2$ in Theorem \ref{thm} is redundant and for clarity only. As the Sobolev inequality admits an additional symmetry, we exclude $(a,b)=(0,0)$.
	These assumptions on $a$, $b$, $d$, $q$, and $\Lambda$ are standard and will be used throughout Section \ref{sec2}, \ref{sec3}, and \ref{sec4}. In the latter two sections we will restrict ourselves to the FS-curve, that is, $\Lambda=\Lambda_{FS}$ for $a<0$.
	
	
	\subsection{Strategy of the proof}\label{subsec1.4}
	
	We will prove Theorem \ref{thm} in the equivalent formulation on the cylinder presented in the previous subsection. This will appear in Corollary \ref{cor} below. We will also present some of the main ingredients that go into the proof of this corollary. Those are stated in three propositions, whose proofs will be given in the remaining sections of this paper.
	
	Our basic technique in this paper will be the iterated Bianchi--Egnell strategy introduced in \cite{Fr}: While Bianchi and Egnell project on the space of trivial zero modes of the Hessian of the \textit{deficit functional} \[\mathcal F(\phi)\coloneqq \|\phi\|^2- C_{a,b} \| \phi\|^2_{q}\,, \qquad \phi\in H^1(\mathcal C)\,,\] it is possible to project further on the nearest non-trivial zero mode. This leads to a Taylor-type expansion of the deficit to quartic order of the distance to the set of optimizers
	\[\operatorname{dist}(\phi,\mathcal M)\coloneqq \inf_{\chi\in\mathcal M}\|\phi-\chi\|\,,\qquad \phi\in H^1(\mathcal C)\,.\]
	
	In our first step, we consider a minimizing sequence $(u_n)_n$ for the functional inequality \eqref{CKNEF} and project it on the nearest trivial zero mode. As the CKN-inequality \eqref{CKN} is invariant under dilations, and hence \eqref{CKNEF} under translations, we will have to handle the emerging lack of compactness. The content of our first proposition is that the projection can be chosen to be orthogonal in $H^1(\mathcal C)$ to \[
	\operatorname{span}\{u,\partial_s u\}\,, \] 
	the \textit{trivial} zero modes of the Hessian of $\mathcal F$ at $u$. We call these zero modes trivial, because they come from symmetries of $\mathcal F$, namely, from multiplication by a constant and from translations. The proposition will lead us to a decomposition of $(u_n)_n$ into an optimizer and a remainder term that converges to $0$ in the $H^1(\mathcal C)$-norm. 
	
	\begin{proposition}[Projection on the trivial zero modes of the Hessian]\label{prop1}
		Let $(a,b)\in\R^2\setminus \{(0,0)\}$ be admissible \eqref{adm}, attainable \eqref{adm2}, and symmetric \eqref{adm3} with $d\geq 2$ and $q$ given by \eqref{q}. Let $(u_n)_n$ be a sequence in $H^1(\mathcal C)$ such that 
		\begin{equation}
			\label{eq:optseq}
			\|u_n\|^2_q\to1\qquad \text{ and }\qquad \|u_n\|^2\to C_{a,b} \qquad\text{ for } n\to\infty\,.
		\end{equation}
		Then there are $\lambda_n\in\R\setminus\{0\}$, $ t_n\in\R$, and $r_n\in H^1(\mathcal{C})$ such that, along a subsequence, we have
		\begin{equation}\label{dec2}
			u_n(s,\omega)=\lambda_n (u+r_n)(s-t_n,\omega)\,,
			\qquad (s,\omega)\in\mathcal C\,,
		\end{equation}
		with $\operatorname{dist}(u_n,\mathcal M)=\|u_n-\lambda_n u(\ \cdot-t_n)\|=\|\lambda_n r_n\|$ and the following convergence and orthogonality properties.
		\begin{itemize}
			\item	\textit{Convergence properties}:  $\|r_n\|\to 0$ and $\lambda_n\to \lambda^*$
			hold for $n\to\infty$ and some $\lambda^*\in\R\setminus\{0\}$ with $|\lambda^*|= \|u\|^{-1}_q$.
			\item 	\textit{Orthogonality properties}: For all $n\in\N$ we have
			\begin{equation}\notag
				\langle r_n, u \rangle=\langle r_n, \partial_s u\rangle =0\,.
			\end{equation}
		\end{itemize}
	\end{proposition}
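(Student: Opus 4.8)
The plan is to run a concentration--compactness argument to handle the lack of compactness, identify the limiting bubble with $u$ via the known classification of optimisers, and then gain the orthogonality by a short variational argument. In detail: since $\|u_n\|_q^2\to1$ and $\|u_n\|^2\to C_{a,b}$, the rescaled functions $\tilde u_n\coloneqq u_n/\|u_n\|_q$ satisfy $\|\tilde u_n\|_q=1$ and $\|\tilde u_n\|^2\to C_{a,b}=\inf\{\|\phi\|^2:\phi\in H^1(\mathcal C),\ \|\phi\|_q=1\}$, so $(\tilde u_n)_n$ is a minimising sequence for \eqref{CKNEF}. The first step is to show it is relatively compact modulo the translations $\phi\mapsto\phi(\ \cdot+\tau)$, $\tau\in\R$, by Lions' concentration--compactness principle. \emph{Vanishing} is excluded because it would force $\tilde u_n\to0$ in $L^q(\mathcal C)$, contradicting $\|\tilde u_n\|_q=1$. \emph{Dichotomy} is excluded by the strict subadditivity of $\theta\mapsto S(\theta)\coloneqq\inf\{\|\phi\|^2:\|\phi\|_q^q=\theta\}$, which follows from the homogeneity $S(\theta)=\theta^{2/q}S(1)$ (obtained by rescaling $\phi\mapsto c\phi$) and the strict concavity of $\theta\mapsto\theta^{2/q}$ for $q>2$. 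Hence, along a subsequence and for suitable $\tau_n\in\R$, $\tilde u_n(\ \cdot+\tau_n)$ is tight, and local Rellich compactness on $[-R,R]\times\Sph^{d-1}$ gives $\tilde u_n(\ \cdot+\tau_n)\to w$ in $L^q(\mathcal C)$ with $\|w\|_q=1$; weak lower semicontinuity of $\|\cdot\|$ then yields $\|w\|^2\le C_{a,b}=C_{a,b}\|w\|_q^2\le\|w\|^2$, so $w$ optimises \eqref{CKNEF}, $\|\tilde u_n(\ \cdot+\tau_n)\|\to\|w\|$, and therefore $\tilde u_n(\ \cdot+\tau_n)\to w$ in $H^1(\mathcal C)$.

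By the classification of optimisers recalled in Subsection~\ref{subsec1.2} (valid since $(a,b)\neq(0,0)$ is attainable and symmetric), $w=\lambda_\infty\,u(\ \cdot-s_\infty)$ for some $\lambda_\infty\in\R\setminus\{0\}$, $s_\infty\in\R$, with $|\lambda_\infty|\,\|u\|_q=\|w\|_q=1$. Absorbing $s_\infty$ into $\tau_n$ and using $\|u_n\|_q\to1$, this gives $u_n(\ \cdot+\tau_n)\to\lambda_\infty u$ in $H^1(\mathcal C)$, i.e.\ $\|u_n-\lambda_\infty u(\ \cdot-\tau_n)\|\to0$; since $\lambda_\infty u(\ \cdot-\tau_n)\in\mathcal M$, we conclude $\dist(u_n,\mathcal M)\to0$.

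The final step is to replace $(\lambda_\infty,\tau_n)$ by parameters that actually realise the distance; these will automatically yield the orthogonality. I would first show that the infimum $\dist(u_n,\mathcal M)=\inf_{\lambda,t\in\R}\|u_n-\lambda u(\ \cdot-t)\|$ is attained once $n$ is large. Along a minimising pair-sequence $(\lambda_k,t_k)$, the estimate $|\lambda_k|\,\|u\|\le\|u_n\|+\|u_n-\lambda_k u(\ \cdot-t_k)\|$ keeps $(\lambda_k)$ bounded; and if $|t_k|\to\infty$ along a subsequence, then $\lambda_k u(\ \cdot-t_k)\rightharpoonup0$, so $\liminf_k\|u_n-\lambda_k u(\ \cdot-t_k)\|^2\ge\|u_n\|^2$, which for large $n$ strictly exceeds $\dist(u_n,\mathcal M)^2$ since $\dist(u_n,\mathcal M)\to0$ while $\|u_n\|^2\to C_{a,b}>0$, contradicting minimality. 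Hence $(t_k)$ is bounded, a subsequence converges to some $(\lambda_n,t_n)$ realising the distance, and $\lambda_n\neq0$ (otherwise $\dist(u_n,\mathcal M)=\|u_n\|\not\to0$). Defining $r_n\in H^1(\mathcal C)$ by $u_n(\ \cdot+t_n)=\lambda_n(u+r_n)$, one has $u_n-\lambda_n u(\ \cdot-t_n)=\lambda_n r_n(\ \cdot-t_n)$, so $\dist(u_n,\mathcal M)=\|\lambda_n r_n\|\to0$; bounding the $L^q$-norm by the $H^1$-norm via \eqref{CKNEF} gives $|\lambda_n|\,\|u\|_q\le\|u_n\|_q+C_{a,b}^{-1/2}\dist(u_n,\mathcal M)$ as well as $\|u_n\|_q\le|\lambda_n|\,\|u\|_q+C_{a,b}^{-1/2}\dist(u_n,\mathcal M)$, hence $|\lambda_n|\to\|u\|_q^{-1}$; passing to a further subsequence, $\lambda_n\to\lambda^*$ with $|\lambda^*|=\|u\|_q^{-1}$, and then $\|r_n\|=\|\lambda_n r_n\|/|\lambda_n|\to0$. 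The orthogonality is then the first-order condition at this interior minimiser: the map $(\lambda,t)\mapsto\|u_n-\lambda u(\ \cdot-t)\|^2$ is $C^1$ (the curve $t\mapsto u(\ \cdot-t)$ is $C^1$ into $H^1(\mathcal C)$ because $u\in H^2(\mathcal C)$), so vanishing of its $\lambda$- and $t$-derivatives at $(\lambda_n,t_n)$ gives $\langle u_n-\lambda_n u(\ \cdot-t_n),u(\ \cdot-t_n)\rangle=0$ and $\langle u_n-\lambda_n u(\ \cdot-t_n),(\partial_s u)(\ \cdot-t_n)\rangle=0$; substituting $u_n-\lambda_n u(\ \cdot-t_n)=\lambda_n r_n(\ \cdot-t_n)$, using translation invariance of $\langle\cdot,\cdot\rangle$, and dividing by $\lambda_n$ yields $\langle r_n,u\rangle=\langle r_n,\partial_s u\rangle=0$.

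The step I expect to be the main obstacle is the concentration--compactness argument of the first paragraph: since $\mathcal C$ is non-compact in the $s$-direction, a minimising sequence of \eqref{CKNEF} can a priori leak mass to infinity or split into several bubbles, which one rules out via the strict subadditivity above (equivalently, the fact that the optimal constant is not lowered by splitting the mass). Everything downstream of compactness — identifying the limit through the known classification of optimisers and extracting the orthogonality from first-order conditions at the distance-minimising parameters — is routine.
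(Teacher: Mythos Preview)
Your proof is correct and follows essentially the same route as the paper: concentration--compactness to obtain $H^1$-convergence of suitably translated $u_n$ to a multiple of $u$, then selection of distance-realising parameters $(\lambda_n,t_n)$, with the orthogonality read off as the first-order optimality condition. The only cosmetic difference is that the paper shows attainment of $\dist(u_n,\mathcal M)$ by observing that $t\mapsto\langle u(\cdot-t),u_n\rangle^2$ is continuous and vanishes at infinity (so $\lambda_n=\|u\|^{-2}\langle u(\cdot-t_n),u_n\rangle$ is explicit), and then deduces $\lambda_n\to\lambda^*$ without a further subsequence; your compactness argument on minimising pairs $(\lambda_k,t_k)$ and subsequent extraction of a subsequence for $\lambda_n$ is an equally valid variant.
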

	
	We have written the orthogonality conditions in terms of the $H^1(\mathcal C)$-inner product. Using the equations \eqref{EL} and \eqref{EL1}, we can rewrite them in terms of the $L^2(\mathcal C)$-inner product as
	\begin{equation}\label{ortho1}
		\langle r_n, u^{q-1} \rangle_2=\langle r_n, (q-1)u^{q-2} \partial_s u\rangle_2=0\,.
	\end{equation}
	
	For $\Lambda<\Lambda_{FS}$, the Hessian of the deficit functional $\mathcal F$ at $u$ has trivial zero modes only. In case $\Lambda=\Lambda_{FS}$, however, the Hessian of $\mathcal F$ admits \textit{non-trivial} zero modes as well \cite{FeSc}, that is, the kernel of the Hessian strictly contains $\operatorname{span}\{u,\partial_s u\}$. While some authors refer to all zero modes that are not trivial as non-trivial zero modes, we will call a zero mode non-trivial if it lies in the orthogonal complement of the trivial zero modes. As we will see later, the space of non-trivial zero modes is given by the span of
	\[
	u^{\sfrac{q}{2}}\omega_i,
	\qquad
	i= 1,\dots,d \,,
	\]
	where $\omega_1,\dots,\omega_d$ denote the Cartesian coordinates restricted to $\mathbb S^{d-1}$. These modes do not arise from symmetries of $\mathcal F$.
	
	We will now see that, if we require the functional $\mathcal F$ to decay faster than the distance to the set of optimizers squared,
	\begin{equation}
		\label{van}	\lim_{n\to\infty} \frac{\mathcal F(u_n)}{\operatorname{dist}(u_n,\mathcal M)^2}=0\,,
	\end{equation}
	then a non-trivial zero mode contributes to $u_n$ satisfying \eqref{eq:optseq}, and we are able to further expand the minimizing sequence $(u_n)_n$. It turns out that we can decompose the previous remainder $r_n$ into the nearest non-trivial zero mode of the Hessian of $\mathcal F$ with decaying amplitude and a new remainder term that converges even faster in the $H^1(\mathcal C)$-norm. This new remainder can be chosen to be $H^1(\mathcal C)$-orthogonal to all zero modes of the Hessian.
	
	\begin{proposition}[Projection on the non-trivial zero modes of the Hessian]\label{prop2}
		Let $(a,b)\in\R^2$ satisfy $a<0$ and $\Lambda=\Lambda_{FS}$ with $d\geq 2$ and $q$ given by \eqref{q}. Let $(u_n)_n$ be a sequence in $H^1(\mathcal C)$ satisfying \eqref{eq:optseq} and \eqref{van}.	Then there are $\lambda_n\in\R\setminus\{0\}$, $t_n,\mu_n\in\R$, $D_n\in \mathcal{O}(d)$,
		and $R_n\in H^1(\mathcal{C})$ such that, along a subsequence, we have
		\begin{equation}\label{dec3}
			u_n(s,\omega)=\lambda_n (u+\mu_n(u^{\sfrac{q}{2}}\omega_d+R_n))(s-t_n, D_n\omega)\,, \qquad (s,\omega)\in \mathcal C\,,
		\end{equation}
		with $\lambda_n$, $t_n$, and $u$ from Proposition \ref{prop1} and the following additional convergence and orthogonality properties.
		\begin{itemize}
			\item	\textit{Convergence properties}: $\mu_n \to 0$ and $\|R_n\|\to 0$ hold for $n\to\infty$.
			\item 	\textit{Orthogonality properties:} For all $n\in\N$ we have 
			\begin{equation*}
				\langle R_n, u\rangle=\langle R_n, \partial_s u\rangle=\langle R_n, u^{\sfrac{q}{2}}\omega_i\rangle=0 \qquad\text{ for } i=1, ..., d\,.
			\end{equation*}
		\end{itemize}	Here $\mathcal O(d)$ denotes the set of orthogonal $d\times d$ matrices.
	\end{proposition}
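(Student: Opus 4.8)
The plan is to run a second Bianchi--Egnell-type decomposition, now starting from the remainder $r_n$ produced by Proposition \ref{prop1}. By that proposition (applied along a subsequence) we may write $u_n(s,\omega) = \lambda_n(u+r_n)(s-t_n,\omega)$ with $\|r_n\|\to 0$, $\langle r_n,u\rangle = \langle r_n,\partial_s u\rangle = 0$, and $\lambda_n\to\lambda^*$ with $|\lambda^*| = \|u\|_q^{-1}$. First I would expand the deficit $\mathcal F(u_n) = |\lambda_n|^2\mathcal F(u+r_n)$ to second order in $r_n$. Using the Euler--Lagrange equation \eqref{EL}, the quadratic form is governed by the Hessian $Q(r) \coloneqq \|r\|^2 - C_{a,b}\|u\|_q^{2-q}\bigl((q-1)\langle u^{q-2}r,r\rangle_2 + \cdots\bigr)$, whose nonnegativity on the orthogonal complement of the trivial zero modes is the spectral input. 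On the FS-curve the kernel of this Hessian, restricted to that complement, is exactly $\mathrm{span}\{u^{q/2}\omega_i : i=1,\dots,d\}$ (the non-trivial zero modes), and $Q$ is \emph{strictly} coercive on the orthogonal complement of \emph{all} zero modes, with a spectral gap $\kappa>0$. This is where the hypothesis $\Lambda = \Lambda_{FS}$, $a<0$ enters, via the analysis of the operator $-\partial_s^2 + \Lambda - (q-1)u^{q-2}$ on $\mathcal C$ decomposed into spherical harmonics, together with \eqref{EL2}.

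Next I would introduce the projection. Decompose $r_n = \sum_{i=1}^d \nu_{n,i}\,u^{q/2}\omega_i + \tilde R_n$ where $\tilde R_n \perp u, \partial_s u, u^{q/2}\omega_1,\dots,u^{q/2}\omega_d$ in $H^1(\mathcal C)$. Then $\mathcal F(u_n) \geq |\lambda_n|^2\bigl(Q(\tilde R_n) + o(\|r_n\|^2)\bigr) \geq |\lambda_n|^2\bigl(\kappa\|\tilde R_n\|^2 + o(\|r_n\|^2)\bigr)$, the error term being cubic, hence $o(\|r_n\|^2)$ since $\|r_n\|\to 0$. On the other hand $\mathrm{dist}(u_n,\mathcal M)^2 = \|\lambda_n r_n\|^2 \asymp \sum_i \nu_{n,i}^2 + \|\tilde R_n\|^2$. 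Combining with the faster-decay hypothesis \eqref{van}, $\mathcal F(u_n)/\mathrm{dist}(u_n,\mathcal M)^2 \to 0$, I conclude $\|\tilde R_n\|^2 = o\bigl(\sum_i \nu_{n,i}^2 + \|\tilde R_n\|^2\bigr)$, which forces $\|\tilde R_n\|^2 = o\bigl(\sum_i \nu_{n,i}^2\bigr)$; in particular the vector $\nu_n = (\nu_{n,1},\dots,\nu_{n,d})$ is nonzero for large $n$, and $|\nu_n| \to 0$ because $\|r_n\|\to 0$.

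Finally I would use the $O(d)$-symmetry of \eqref{CKNEF} to align the zero-mode direction. Since $\omega\mapsto D\omega$ for $D\in\mathcal O(d)$ is an isometry of $\mathcal C$ preserving $u$ and permuting the span $\{u^{q/2}\omega_i\}$ exactly as $D$ permutes the coordinate functions, I can pick $D_n\in\mathcal O(d)$ mapping the unit vector $\nu_n/|\nu_n|$ to $e_d$; set $\mu_n \coloneqq |\nu_n|$ and let $R_n$ be the rotated-and-rescaled version of $\tilde R_n$, namely $R_n(s,\omega) \coloneqq \mu_n^{-1}\tilde R_n(s, D_n^{-1}\omega)$ after absorbing the (possibly sign-changing) scalar so that the coefficient of $u^{q/2}\omega_d$ is $\mu_n>0$. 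Then \eqref{dec3} holds, the orthogonality relations are preserved since $D_n$ is an isometry fixing $u$, $\partial_s u$ and acting on the $u^{q/2}\omega_i$, and $\|R_n\| = \mu_n^{-1}\|\tilde R_n\| \to 0$ by the estimate $\|\tilde R_n\|^2 = o(\mu_n^2)$ just obtained. The main obstacle is establishing the strict spectral gap $\kappa>0$ of the Hessian on the complement of all zero modes on the FS-curve — i.e.\ confirming that the kernel is \emph{exactly} $\mathrm{span}\{u,\partial_s u, u^{q/2}\omega_1,\dots,u^{q/2}\omega_d\}$ with nothing degenerate lurking in higher spherical-harmonic sectors — and carefully handling the subsequence extraction so that $\lambda_n, t_n$ remain those of Proposition \ref{prop1}; the remaining bookkeeping with the cubic remainder and the rotation is routine.
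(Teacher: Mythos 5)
Your proposal follows essentially the same route as the paper's proof: start from the decomposition of Proposition~\ref{prop1}, expand the deficit to quadratic order around $u$ so that the Hessian $D_u^2\mathcal F$ governs the leading behaviour (with error $\mathcal O(\|r_n\|^{q\wedge 3}) = o(\|r_n\|^2)$ as in~\eqref{expan}), project $r_n$ orthogonally onto $\operatorname{span}\{u^{q/2}\omega_i\}$ to split off $\tilde R_n$, use the spectral gap of the Hessian on the orthogonal complement of the full kernel together with hypothesis~\eqref{van} to force $\|\tilde R_n\|^2 = o(|\nu_n|^2)$, and finally use an $\mathcal O(d)$-rotation to align $\nu_n$ with $e_d$ and rescale to get $R_n$. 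This is precisely the structure of the paper's argument, including the identification of the spectral gap via the analysis of $-\partial_s^2 - \Delta_\omega + \Lambda - (q-1)u^{q-2}$ on spherical-harmonic sectors (Subsection~\ref{subsec3.1} and Lemma~\ref{pt}), which you correctly single out as the place where $\Lambda = \Lambda_{FS}$ and \eqref{EL2} enter. The paper in addition records the exact normalization $\|r_n\|^2 = \|\tilde R_n\|^2 + \frac{q-1}{d}\|u^{q-1}\|_2^2\,\mu_n^2$ and handles the degenerate case $\mu_n = 0$ explicitly by setting $R_n = 0$, but these are bookkeeping points that your sketch implicitly covers.
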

	Using the equations \eqref{EL}, \eqref{EL1}, and \eqref{EL2}, we can rewrite the orthogonality conditions as
	\begin{equation}
		\label{eq:prop1orthol2}
		\langle R_n, u^{q-1} \rangle_2=\langle R_n, (q-1)u^{q-2} \partial_s u\rangle_2=\langle R_n, (q-1)u^{q-2}  u^{\sfrac{q}{2}}\omega_i
		\rangle_2=0 \qquad\text{ for } i=1, ..., d\,.
	\end{equation}
	
	It is not hard to show that, in fact, \eqref{van} and $\|u_n\|^2\to C_{a,b}$ for $n\to \infty$ make the assumption $\|u_n\|_q\to 1$ redundant.
	
	The decomposition in Proposition \ref{prop2} allows us to expand $\mathcal F$ to quartic order. In this way, we will obtain the following crucial asymptotic inequality, which will imply our main theorem.
	
	\begin{proposition}[Non-vanishing of the quartic order]\label{prop3}
		Let $(a,b)\in\R^2$ satisfy $a<0$ and $\Lambda=\Lambda_{FS}$ with $d\geq 2$ and $q$ given by \eqref{q}. There is an explicit constant $$J(q,d)>0\,,$$ which is given in \eqref{bestconst}, such that for every sequence $(u_n)_n\subset H^1(\mathcal C)$ satisfying \eqref{eq:optseq} we have
		\begin{equation}\label{finalest}
			\liminf_{n\to\infty} \frac{\|u_n\|^2\mathcal F(u_n)}{\operatorname{dist}(u_n,\mathcal M)^4}\geq J(q,d)\,.
		\end{equation}
		Moreover, the bound \eqref{finalest} is best possible in the sense that there is a sequence $(u_n)_n$ satisfying \eqref{eq:optseq} for which equality is attained in \eqref{finalest}.
	\end{proposition}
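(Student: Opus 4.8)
The plan is to run the second stage of the iterated Bianchi--Egnell scheme: reduce to the degenerate regime \eqref{van}, insert the decomposition of Proposition~\ref{prop2}, expand the deficit $\mathcal F$ to fourth order along it, and show that the resulting quartic form -- after minimizing out the remainder -- is bounded below by a strictly positive constant, which becomes $J(q,d)$.

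\emph{Reduction and set-up.} First I would pass to a subsequence realizing the $\liminf$ in \eqref{finalest}; if it is $+\infty$ there is nothing to prove. Otherwise, since \eqref{eq:optseq} holds, Proposition~\ref{prop1} gives $\dist(u_n,\mathcal M)\to 0$, while $\mathcal F(u_n)=\|u_n\|^2-C_{a,b}\|u_n\|_q^2\to 0$; a finite limit of the ratio then forces $\mathcal F(u_n)/\dist(u_n,\mathcal M)^2\to 0$ along a further subsequence, that is, \eqref{van}, so that Proposition~\ref{prop2} applies. Writing $\rho_n\coloneqq u^{\sfrac{q}{2}}\omega_d+R_n$ and using the translation/rotation invariance of $\mathcal F$ together with $\mathcal F(\lambda\phi)=\lambda^2\mathcal F(\phi)$, one gets $\mathcal F(u_n)=\lambda_n^2\,\mathcal F(u+\mu_n\rho_n)$ and $\dist(u_n,\mathcal M)=|\lambda_n|\mu_n\|\rho_n\|$, while $\|u_n\|^2\to C_{a,b}$, $\lambda_n^2\to\|u\|_q^{-2}$ and $\|\rho_n\|^2\to\|u^{\sfrac{q}{2}}\omega_d\|^2$. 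Hence it suffices to bound $\liminf_n\mu_n^{-4}\mathcal F(u+\mu_n\rho_n)$ from below and to take for $J(q,d)$ the corresponding multiple of that bound; this is the constant \eqref{bestconst}.

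\emph{Fourth-order expansion.} Since $u$ is radial, $\langle u,\rho_n\rangle=0$, so $\|u+\mu\rho_n\|^2=\|u\|^2+\mu^2\|\rho_n\|^2$ exactly. For the $L^q$-term I would Taylor-expand $\|u+\mu\rho_n\|_q^2$ to order $\mu^4$, using the orthogonality relations \eqref{eq:prop1orthol2} and the vanishing angular moments $\int_{\mathbb S^{d-1}}\omega_d\,\mathrm{d}\omega=\int_{\mathbb S^{d-1}}\omega_d^3\,\mathrm{d}\omega=0$ (so that $\langle u^{q-1},\rho_n\rangle_2=0$, killing the linear term, and the pure cubic term in $u^{\sfrac{q}{2}}\omega_d$ also vanishes), and -- crucially -- the identity \eqref{EL2} with $\Lambda=\Lambda_{FS}$, which makes $u^{\sfrac{q}{2}}\omega_d$ a zero mode of the Hessian $\mathcal F''(u)$, so that the $\mu^2$-term only sees $R_n$. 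For $q\geq 4$ this is the usual Taylor expansion; for $2<q<4$, where $|t|^q$ is not $C^4$, one needs a more careful argument -- the point being that the zero-mode direction $u^{\sfrac{q}{2}}\omega_d$ decays fast enough that $u+\mu u^{\sfrac{q}{2}}\omega_d>0$ with $|\mu u^{\sfrac{q}{2}}\omega_d|\leq\tfrac{1}{2}u$ on all of $\mathcal C$ for small $\mu$ (so that its quartic contribution $\kappa_4\coloneqq\lim_{\mu\to0}\mu^{-4}\mathcal F(u+\mu u^{\sfrac{q}{2}}\omega_d)$ is a finite, explicit combination of weighted moments of $u$ and $\omega_d$), whereas the $H^1$-perturbation $R_n$ does not decay; the remaining cross-terms with $R_n$ are then controlled by splitting $\mathcal C$ according to whether $|\mu\rho_n|\leq\tfrac{1}{2}u$ and invoking the exponential decay of $u$. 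With $w\coloneqq u^{q-3}(u^{\sfrac{q}{2}}\omega_d)^2=u^{2q-3}\omega_d^2$ and $S_n\coloneqq R_n/\mu_n$, the result is
\[
\mu_n^{-4}\,\mathcal F(u+\mu_n\rho_n)=\tfrac{1}{2}\mathcal F''(u)[S_n,S_n]-(q-1)(q-2)\,\langle w,S_n\rangle_2+\kappa_4+o(1)
\]
whenever $\|S_n\|$ stays bounded; if $\|S_n\|\to\infty$ along a subsequence, the spectral gap of $\mathcal F''(u)$ above its kernel (part of the spectral analysis underlying Proposition~\ref{prop2}) forces the left-hand side to tend to $+\infty$.

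\emph{The variational sub-problem and the secondary non-degeneracy condition.} Since $S_n$ is orthogonal to all zero modes, it lies in $V\coloneqq(\operatorname{span}\{u,\partial_s u,u^{\sfrac{q}{2}}\omega_1,\dots,u^{\sfrac{q}{2}}\omega_d\})^\perp$, and the right-hand side above is $\geq\min_{S\in V}\big(\tfrac{1}{2}\mathcal F''(u)[S,S]-(q-1)(q-2)\langle w,S\rangle_2\big)+\kappa_4$; as the quadratic form is coercive on $V$, this minimum equals $\kappa_4-\tfrac{1}{2}(q-1)(q-2)\langle w,S^\ast\rangle_2=:\widetilde J$, where $S^\ast\in V$ is the unique solution of $\mathcal F''(u)[S^\ast,T]=(q-1)(q-2)\langle w,T\rangle_2$ for all $T\in V$. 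It remains to prove the secondary non-degeneracy condition $\widetilde J>0$, which is the heart of the argument and its main obstacle. Decomposing in spherical harmonics, $w$ has non-zero angular content only in degrees $0$ and $2$, so $S^\ast=S^\ast_0+S^\ast_2$; in degree $2$ the reduced operator $-\partial_s^2+\Lambda+2d-(q-1)u^{q-2}$ has bottom eigenvalue $2d-\tfrac{1}{4}(q^2-4)\Lambda$, which equals $d+1>0$ \emph{precisely because} $\Lambda=\Lambda_{FS}$ (in degree $1$ the same computation gives $0$, recovering the non-trivial zero mode), hence it is invertible, while in degree $0$ the reduced operator has kernel $\operatorname{span}\{\partial_s u\}$ and $S^\ast_0$ is determined up to the Lagrange multiplier of the constraint $S\perp u$. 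I would solve these inhomogeneous second-order ODEs explicitly by a power series (in $\operatorname{sech}(\alpha s)$, equivalently via hypergeometric functions; cf.\ Section~\ref{sec:solving}), obtain closed forms for $\kappa_4$ and $\langle w,S^\ast\rangle_2$, and verify $\widetilde J>0$ by establishing positivity of the resulting infinite series for all admissible $(q,d)$ -- purely analytically. The difficulty is genuine: one has $\langle w,S^\ast\rangle_2\geq 0$ (so the subtracted term only works against us), and its degree-$2$ piece is strictly positive, so one must show $\kappa_4>0$ -- already non-obvious for $q>3$ -- and, beyond that, that $\kappa_4$ dominates $\tfrac{1}{2}(q-1)(q-2)\langle w,S^\ast\rangle_2$, which requires the precise form of $u$. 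Finally, for the optimality claim I would take $u_n\coloneqq\lambda^\ast(u+\mu_n(u^{\sfrac{q}{2}}\omega_d+\mu_n S^\ast))$ with $\mu_n\to0^+$ and $|\lambda^\ast|=\|u\|_q^{-1}$: then \eqref{eq:optseq} holds, $\dist(u_n,\mathcal M)=|\lambda^\ast|\mu_n\|u^{\sfrac{q}{2}}\omega_d\|(1+o(1))$ since the perturbation is orthogonal to the trivial zero modes, and the fourth-order expansion is attained (using $\mathcal F''(u)[S^\ast,S^\ast]=(q-1)(q-2)\langle w,S^\ast\rangle_2$), so that $\|u_n\|^2\mathcal F(u_n)/\dist(u_n,\mathcal M)^4\to J(q,d)$, proving both assertions.
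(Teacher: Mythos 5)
Your proposal follows essentially the same route as the paper's proof: reduce to the degenerate regime \eqref{van} via Propositions~\ref{prop1} and~\ref{prop2}, expand $\mathcal F$ to quartic order (handling $2<q<4$ by splitting the integration domain), reduce $(A)$ to a quadratic-plus-linear variational problem over the orthogonal complement of the kernel, decompose into spherical harmonics, and solve the resulting inhomogeneous ODEs in degrees $0$ and $2$ to identify the constant $J(q,d)$; the optimizing sequence then saturates the bound. Two small differences worth noting: the paper splits the $L^q$-integral according to $|R_n|\lessgtr u^{\sfrac q2}$ rather than $|\mu_n\rho_n|\lessgtr \tfrac12 u$, which conveniently isolates $R_n$ and lets one reuse the same cutoff scale as the zero mode; and the degree-$0$ sub-problem admits a closed-form solution $g=Ku^{q-1}+Lu$ (paper's Lemma~\ref{lem5new}), so only the degree-$2$ equation genuinely needs the power-series/hypergeometric machinery of Section~\ref{sec:solving}. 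Your framing that one must establish $\kappa_4>0$ separately and then show it dominates $\langle w,S^*\rangle_2$ is consistent with the paper's final step, where $J(q,d)$ is shown positive by a convexity argument on the function $P$ (with a separate rougher estimate for $d=2$, $q\leq 2.8$).
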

	
	The key point of this proposition is the strict inequality $J(q,d)>0$. In the limit $q\to 2^*$, it can be shown that $J(q,d)$ vanishes, which is due to the additional translation symmetry in case $(a,b)=(0,0)$. The non-vanishing of $J(q,d)$ for $q<2^*$ can be viewed as a \textit{secondary non-degeneracy condition} \cite{Fr,Fr2}. We give a concrete definiton in terms of variational derivatives of the deficit $\mathcal F$, which may be generalized to the setting of other functional inequalities.
	
	\begin{definition}\label{def:secnondeg}
		Let $\psi\in\mathcal M$. We say that the CKN-inequality satisfies the secondary non-degeneracy condition if
		\begin{equation}\label{secnondeg}
			(\partial_\epsilon^4 \mathcal F) (\psi+\epsilon (g+\epsilon \phi))|_{\epsilon=0}>0
		\end{equation}
		for every non-trivial zero mode $g$ in $ \operatorname{Ker}(D^2_\psi\mathcal F)$ and every $\phi$ that is $H^1(\mathcal C)$-orthogonal to  $\operatorname{Ker}(D^2_\psi\mathcal F)$. 
	\end{definition}
	
	Note that
	\begin{equation}\label{secnondeg1}
		(\partial_\epsilon^4 \mathcal F) (\psi+\epsilon (g+\epsilon \phi))|_{\epsilon=0}=12(D_\psi^2\mathcal F(\phi,\phi)+ D_\psi^3\mathcal F(g,g,\phi))+D_\psi^4\mathcal F (g,g,g,g)\,,
	\end{equation}
	where we wrote the differentials in $\psi$ as multilinear forms.
	
	As a consequence of the previous proposition, the secondary non-degeneracy condition can be verified, and we can prove, by contradiction, degenerate stability of quartic order. The following assertion is equivalent to Theorem \ref{thm} via the Emden--Fowler transformation.
	
	\begin{corollary}[Degenerate stability of a Sobolev inequality for a cylinder along the FS-curve] \label{cor}
		Let $(a,b)\in\R^2$ satisfy $a<0$ and $\Lambda=\Lambda_{FS}$ with $d\geq 2$ and $q$ given by \eqref{q}. Then there is a constant $ c(q,d)>0$ such that for all $\phi\in H^1(\mathcal{C})$,
		\begin{equation*}
			\mathcal F(\phi)\geq  c(q,d)\frac{\operatorname{dist}(\phi,\mathcal M)^4}	{\|\phi\|^2}\,.
		\end{equation*}
		Moreover, the inequality is best possible with respect to the power four, that is, there is a sequence $(\phi_n)_n\subset H^1(\mathcal C)\setminus\{0\}$ satisfying \eqref{eq:optseq} with
		$$
		\limsup_{n\to\infty} \frac{\|\phi_n\|^2\, \mathcal F(\phi_n)}{\operatorname{dist}(\phi_n,\mathcal M)^4} < \infty \,. 
		$$
	\end{corollary}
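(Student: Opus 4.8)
The plan is to derive the corollary from Proposition~\ref{prop3} by an (iterated) Bianchi--Egnell argument by contradiction, and to read off the optimality from the sharp sequence provided there; Theorem~\ref{thm} then follows by undoing the Emden--Fowler substitution of Subsection~\ref{subsec1.2}. First I would record that the asserted inequality is invariant under the two symmetries of $\mathcal F$: translations $\phi\mapsto\phi(\cdot-t)$ leave both sides unchanged, and under scaling $\phi\mapsto\lambda\phi$ with $\lambda\neq0$ one has $\mathcal F(\lambda\phi)=\lambda^2\mathcal F(\phi)$, $\|\lambda\phi\|^2=\lambda^2\|\phi\|^2$ and $\operatorname{dist}(\lambda\phi,\mathcal M)=|\lambda|\operatorname{dist}(\phi,\mathcal M)$, the last identity because $\mathcal M$ is a cone (it contains $0$ and is invariant under multiplication by real scalars). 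Since both sides vanish when $\phi\in\mathcal M$ and there is nothing to prove for $\phi=0$, it suffices to show that the quotient $\|\phi\|^2\mathcal F(\phi)/\operatorname{dist}(\phi,\mathcal M)^4$ has a strictly positive infimum over $\phi\in H^1(\mathcal C)\setminus\mathcal M$.

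To this end I would argue by contradiction. Suppose there is a sequence $(\phi_n)_n\subset H^1(\mathcal C)\setminus\mathcal M$ along which this quotient tends to $0$, and use the scaling invariance to normalise $\|\phi_n\|_q=1$. Since $0\in\mathcal M$ we have $\operatorname{dist}(\phi_n,\mathcal M)\leq\|\phi_n\|$, so
\[
\frac{\|\phi_n\|^2\,\mathcal F(\phi_n)}{\operatorname{dist}(\phi_n,\mathcal M)^4}\ \geq\ \frac{\mathcal F(\phi_n)}{\|\phi_n\|^2}\ =\ 1-\frac{C_{a,b}\,\|\phi_n\|_q^2}{\|\phi_n\|^2}\ =\ 1-\frac{C_{a,b}}{\|\phi_n\|^2}\ \geq\ 0\,,
\]
where the last inequality is \eqref{CKNEF}. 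Hence $\|\phi_n\|^2\to C_{a,b}$, and together with $\|\phi_n\|_q=1$ this shows that $(\phi_n)_n$ satisfies the optimising-sequence condition \eqref{eq:optseq}. Proposition~\ref{prop3} then gives $\liminf_n \|\phi_n\|^2\mathcal F(\phi_n)/\operatorname{dist}(\phi_n,\mathcal M)^4\geq J(q,d)>0$, contradicting the choice of $(\phi_n)_n$. This proves that the infimum is positive, hence the stability inequality; one may take $c(q,d)$ to be this infimum, which by the next paragraph is at most $J(q,d)$.

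For the optimality I would take the sequence $(u_n)_n$ from Proposition~\ref{prop3} realising equality in \eqref{finalest}, pass to a subsequence along which the $\liminf$ there is attained as a genuine limit, and observe that this subsequence still satisfies \eqref{eq:optseq} and has $\limsup_n \|u_n\|^2\mathcal F(u_n)/\operatorname{dist}(u_n,\mathcal M)^4=J(q,d)<\infty$ --- which is precisely the second assertion, and shows in addition that the power four cannot be improved. Finally, the Emden--Fowler transformation of Subsection~\ref{subsec1.2}, under which $\mathcal F(\phi)$, $\|\phi\|^2$ and $\operatorname{dist}(\phi,\mathcal M)$ correspond respectively to the CKN-deficit, the weighted Dirichlet energy and the $\mathcal D^1_a(\R^d)$-distance to $\mathcal Z$, transfers both statements to Theorem~\ref{thm}. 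The genuine difficulty here lies entirely in Proposition~\ref{prop3}, that is, in establishing the secondary non-degeneracy $J(q,d)>0$; the deduction above is essentially bookkeeping, its only structural input being that, because $0\in\mathcal M$ and by \eqref{CKNEF}, a rescaled quotient-minimising sequence is automatically an optimising sequence in the sense of \eqref{eq:optseq}.
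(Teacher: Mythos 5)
Your argument is correct and takes essentially the same route as the paper: argue by contradiction, use a scaling normalisation plus the elementary bound $\operatorname{dist}(\phi,\mathcal M)\leq\|\phi\|$ (since $0\in\mathcal M$) and the inequality \eqref{CKNEF} to show that a quotient-minimising sequence satisfies \eqref{eq:optseq}, then invoke Proposition~\ref{prop3}; the paper normalises $\|u_n\|^2=C_{a,b}$ where you normalise $\|\phi_n\|_q=1$, but these are trivially interchangeable, and the optimality part is taken verbatim from the extremising sequence in Proposition~\ref{prop3}.
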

	
	\begin{solution}
		We argue by contradiction. For fixed $(q,d)$, assume there is a sequence $(u_n)_n \subset H^1(\mathcal C)$ with 
		\[\lim_{n\to\infty} \frac{\|u_n\|^2(\|u_n\|^2-C_{a,b}\|u_n\|_q^2)}{\inf_{\chi\in\mathcal M}\|u_n-\chi\|^4}= 0\,.\]
		By homogeneity, we may assume $\|u_n\|^2=C_{a,b}$. Using the CKN-inequality \eqref{CKNEF} for $u_n$ and the notion of the infimum $\inf_{\chi\in\mathcal M}\|u_n-\chi\|\leq\|u_n\|$, we find that
		\[0\leq \liminf_{n\to \infty} \left(1-\|u_n\|_q^2\right)\leq \liminf_{n\to\infty} \frac{\|u_n\|^2(\|u_n\|^2-C_{a,b}\|u_n\|_q^2)}{\inf_{\chi\in\mathcal M}\|u_n-\chi\|^4}=0\,.\]
		Hence, we have proved the required convergence properties for $(u_n)_n$ to apply Proposition \ref{prop3}, which leads to a contradiction.
		
		The assertion that the stability inequality is best possible with respect to the power four follows immediately from the corresponding assertion in Proposition \ref{prop3}. In fact, this shows that the sequence can be chosen such that the $\limsup$ in the assertion equals $J(q,d)$.
	\end{solution}
	
	If \eqref{secnondeg} did not hold, we would go on projecting on zero modes of $(\partial_\epsilon^{4} \mathcal F) \left(\psi+\epsilon (g+\epsilon \phi)\right)|_{\epsilon=0}$ and could formulate a similar next order non-degeneracy condition. Repeating this procedure, we would derive a non-degeneracy condition of higher order in every step. If one of these conditions was satisfied, the iteration scheme would end, and we would obtain a degenerate stability result with some exponent greater than four. Following \cite{EnNeSp}, one could probably show that this procedure terminates after finitely many steps. We are not aware of an example of a degenerate stability result with a distance to the power six or higher. 
	
	The remainder of this paper consists of four sections. In Section \ref{sec2}, \ref{sec3}, and \ref{sec4} we present the proofs of Proposition \ref{prop1}, \ref{prop2}, and \ref{prop3}, respectively. In Section \ref{sec:solving} we provide the details of some results used in Section \ref{sec4}.

	
	\section{Projection on the trivial zero modes of the Hessian}\label{sec2}
	
	\subsection{Proof of Proposition \ref{prop1}}	
	As we are dealing with the non-compact domain $\mathcal C=\R\times\mathbb S^{d-1}$, we can only expect relative compactness of optimizing sequences to hold up to non-compact symmetries. Following Lions' concentration compactness principle \cite{Li}, we only have to rule out two phenomena -- \textit{vanishing} and \textit{dichotomy} -- in order to find convergent subsequences up to symmetries. For absence of vanishing we refer to \cite[Lemma 4.1]{CaWa} and references therein. Exploiting the Hilbert space structure of $H^1(\mathcal C)$, dichotomy can be excluded in a standard way; see \cite[Proof of Theorem 1.2(i)]{CaWa}, for instance. As a result, given $(u_n)_n\subset H^1(\mathcal C)$ such that $\|u_n\|_q^2\to 1$ and $\|u_n\|^2\to C_{a,b}$, there are $(t_n^*)_n\subset\R$ such that $u_n(\ \cdot +t_n^*, \cdot \ )$ converges in $H^1(\mathcal C)$. The limit is necessarily an optimizer of \eqref{CKNEF}. In addition, the limit has unit $L^q(\mathcal C)$-norm. According to \cite{DoEsLo2}, the limit is a translate of $\lambda^* u$, where $\lambda^*\in\R\setminus\{0\}$ satisfies $|\lambda^*|= \|u\|^{-1}_q$. By redefining the $t_n^*$, we may assume that the limit is $\lambda^* u$. Consequently, we can write
	\begin{equation}\label{dec}
		u_n(s,\omega)=\lambda^* (u+r_n^*)(s-t_n^*,\omega)\,,
		\qquad (s,\omega)\in\mathcal C\,,
	\end{equation}
	with $r^*_n\in H^1(\mathcal{C})$ satisfying $\|r_n^*\|\to 0$ for $n\to\infty$.
	
	Note that $t\mapsto \langle u(\ \cdot-t),u_n\rangle^2$ is a continuous, non-negative function that vanishes at infinity. Therefore, it attains its maximum at some $t_n\in\R$. Moreover, for any $t\in\R$, $\inf_\lambda	\|u_n- \lambda u(\ \cdot -t)\|^2 = \|u_n\|^2 - \|u\|^{-2} \langle u(\ \cdot-t),u_n\rangle^2$, and the infimum is attained at $\lambda=\|u\|^{-2} \langle u(\ \cdot-t),u_n\rangle$. Thus, if we set $\lambda_n:=\|u\|^{-2} \langle u(\ \cdot-t_n),u_n\rangle$, we see that $\operatorname{dist}(u_n,\mathcal M)^2= \inf_{\lambda,t} \|u_n- \lambda u(\ \cdot -t)\|^2$ is attained at $(\lambda_n,t_n)$.
	
	Define $\tilde r_n\coloneqq u_n(\ \cdot+t_n, \cdot \ )- \lambda_nu\in H^1(\mathcal C)$, and note that 
	\[
	\|\tilde r_n\|=\inf_{\chi\in\mathcal M}\|u_n-\chi\|\leq\|u_n-\lambda^* u(\ \cdot-t_n^*)\|=\|r_n^*\|=o_{n\to\infty}(1)\,.
	\]
	We now deduce that $\lambda_n\to\lambda^*$ and $t_n-t_n^*\to 0$. To this end, we first notice that
	$$
	|\|u_n\|_q - |\lambda_n| \|u\|_q | \leq \| u_n(\ \cdot+t_n,\cdot\ )-\lambda_n u\|_q = \|\tilde r_n\|_q \leq C_{a,b}^{-\sfrac12} \|\tilde r_n\| = o_{n\to\infty}(1)
	$$
	and $\|u_n\|_q \to 1$ imply that $|\lambda_n|\to \|u\|_q^{-1} = |\lambda^*|$. Next, we use the decomposition \eqref{dec} of $u_n$ to find that
	$$
	\lambda_n = \|u\|^{-2} \langle u(\ \cdot-t_n),u_n\rangle = \lambda^* \|u\|^{-2} \langle u(\ \cdot - t_n),u(\ \cdot - t_n^*) \rangle + o_{n\to\infty}(1) \,.
	$$
	Taking the absolute value on both sides, we deduce that $| \langle u(\ \cdot - t_n),u(\ \cdot - t_n^*) \rangle| \to \|u\|^2$. Using $\langle u(\ \cdot - t_n),u(\ \cdot - t_n^*)\rangle = \langle u(\ \cdot - t_n), u(\ \cdot - t_n^*)^{q-1}\rangle_{L^2(\mathcal C)}	\geq 0$ (by \eqref{EL} for $u(\ \cdot- t_n^*)$ and the fact that $u\geq 0$), we deduce on the one hand that $\lambda_n\to\lambda^*$ and on the other hand that
	$$
	\| u - u(\ \cdot+t_n^*-t_n) \|^2 = 2\left( \|u\|^2 - \langle u(\ \cdot - t_n),u(\ \cdot - t_n^*) \rangle \right) \to 0 \,.
	$$
	Since $u$ is symmetric decreasing, we deduce that $t_n^*-t_n\to 0$, as claimed.
	
	Since $\lambda_n\to \lambda^*\neq 0$, up to dropping finitely many $n$, we may assume $\lambda_n\not =0$. Defining $r_n\coloneqq \lambda_n^{-1}\tilde r_n$ then provides the desired decomposition \eqref{dec2} with $\|r_n\|\to 0$. 
	
	Note that $J_n(t,\lambda) \coloneqq\|u_n - \lambda u(\ \cdot - t)\|^2$ inherits the differentiability in $t$ from $u$. Moreover, it is a polynomial in $\lambda$ and hence differentiable in $\lambda$. This yields the desired orthogonality relations 
	\begin{align*}
		0&=\partial_\lambda J_n(t,\lambda)|_{(t,\lambda)=(t_n,\lambda_n)}&&\hspace{-0.4cm}=-2\lambda_n \langle r_n, u\rangle\,,\\ \hskip0.2878\textwidth 0&=\partial_t J_n(t,\lambda)|_{(t,\lambda)=(t_n,\lambda_n)}&&\hspace{-0.4cm}= 2\lambda_n^2\langle r_n, \partial_s u\rangle\,. \hskip0.249\textwidth 	\qed	\end{align*}
	
	
	\section{Projection on the non-trivial zero modes of the Hessian}\label{sec3}
	
	While the analysis in the previous section is relevant in the full range of admissible, attainable, and symmetric parameters, we now turn to properties that are specific for parameter values on the Felli--Schneider curve $\Lambda=\Lambda_{FS}$. Recall that $\omega_1,\dots,\omega_d$ denote the Cartesian coordinates restricted to $\mathbb S^{d-1}$. These generate the space of spherical harmonics of degree $1$. More generally, $(Y_{l,m})_m$ is defined to be the $L^2(\mathbb S^{d-1})$-orthonormal basis of
	spherical harmonics of degree $l$. The degeneracy index $m$ runs through a finite, $l$-dependent set, but we will not need a more detailed description for our purposes. Note that spherical harmonics of degree $0$ are constant. For an introduction to spherical harmonics, we refer to \cite[p. 137--152]{StWe}.
	
	\subsection{Degeneracy along the Felli--Schneider curve}
	\label{subsec3.1}
	
	Let $\psi=\lambda u(\ \cdot-t)\in\mathcal M$. We investigate the stability of the functional $\mathcal F$ around $\psi$ in the classical way by determining the zeros of the Hessian of $\mathcal F$. The Euler--Lagrange equation \eqref{EL} implies $C_{a,b}=\|u\|^2 \|u\|_q^{-2} =\|u\|_q^{q-2}$, which can be used to compute the Hessian of $\mathcal F$. One finds that for all $\phi\in H^1(\mathcal C)$,
	\begin{align*}
		&D^2_\psi \mathcal F (\phi)=\partial^2_\epsilon \mathcal F (\psi+\epsilon \phi)|_{\epsilon=0}
		\\&=2 \left(\|\phi\|^2-(q-1)\int_{\mathcal C}u(\ \cdot-t)^{q-2}\phi^2\ \mathrm{ d}(s,\omega)+(q-2)\|u\|_q^{-q}\left(\int_{\mathcal C}u(\ \cdot-t)^{q-1}\phi\ \mathrm{ d}(s,\omega)\right)^2\right).
	\end{align*}
	This quadratic form corresponds to a self-adjoint, lower bounded operator $\mathcal L_\psi$ in the Hilbert space $L^2(\mathcal C)$ with form domain $H^1(\mathcal C)$ and operator domain $H^2(\mathcal C)$ in the sense that
	$$
	D^2_\psi \mathcal F (\phi) =2 \langle\phi, \mathcal L_{\psi} \phi\rangle_2\,,
	$$
	where 
	\[
	\mathcal L_\psi\coloneqq -\partial_s^2-\Delta_\omega+\Lambda-(q-1)u(\ \cdot -t)^{q-2}+(q-2)\|u\|^{-q}_q |u^{q-1}(\ \cdot-t)\rangle\langle u^{q-1}(\ \cdot-t)| \,.
	\]
	Here $\Delta_\omega$ denotes the Laplace--Beltrami operator on $\mathbb S^{d-1}$, and $ |u^{q-1}(\ \cdot-t)\rangle\langle u^{q-1}(\ \cdot-t)|$ denotes the rank one projector onto $u^{q-1}(\ \cdot-t)$ in $L^2(\mathcal C)$. We stress that the inner product in the definition of the rank one projector is the one in $L^2(\mathcal C)$, not in $H^1(\mathcal C)$. We observe that the operator $\mathcal L_\psi$ is independent of $\lambda$, and hence $\mathcal L_\psi =\mathcal L_{u(\ \cdot  \,- t)}$.
	
	Note that $D^2_\psi\mathcal F$, and hence $\mathcal L_\psi$, is positive semi-definite by optimality of $\psi$. Indeed, we find $\mathcal F(\psi)=0$ and, through the Euler--Lagrange equation \eqref{EL}, $D_\psi\mathcal F=0$. Therefore, expanding $\mathcal F$ with $q>2$ around $\psi$ yields
	\[0\leq\mathcal F(\psi+\epsilon\phi)= \frac{\epsilon^2}{2}D^2_\psi\mathcal F(\phi)+o_{\epsilon\to 0}(\epsilon^2)\,, \qquad \phi\in H^1(\mathcal C)\,.\]
	
	Next, we show that the kernel of $\mathcal L_\psi$ is given by
	\begin{equation}	\label{KerHess}
		\operatorname 
		{Ker} \mathcal L_\psi  = \operatorname 
		{Ker} (D^2 _\psi\mathcal F)=\operatorname{span}\{ \psi, \partial_s \psi, \psi^{\sfrac{q}{2}}\omega_i, i=1,\dots, d \}\,.
	\end{equation}
	By means of $-\Delta_\omega\omega_i=(d-1)\omega_i$ and the equations \eqref{EL}, \eqref{EL1}, and \eqref{EL2}, it can be verified easily that $\mathcal L_\psi$ vanishes on $\psi, \partial_s \psi, \psi^{\sfrac{q}{2}}\omega_i, i=1,\dots, d$, which are mutually orthogonal in both $L^2(\mathcal C)$ and $H^1(\mathcal C)$. The other direction follows from a computation by Felli and Schneider \cite{FeSc}, which we briefly review here. After separating the radial and the angular part of an arbitrary solution $\phi\in H^1(\mathcal C)$ to $\mathcal L_\psi\phi=0$, they reduced this equation to an eigenvalue problem involving a one-dimensional Schr\"odinger operator with P\"oschl--Teller potential:
	\begin{equation}
		(-\partial_s^2-(q-1)u(\ \cdot -t)^{q-2})\Phi_l=\theta_l \Phi_l \label{ELQPT}
	\end{equation} 
	with $\theta_l\coloneqq-(l(l+d-2)+\Lambda)$ and $\Phi_l\in H^1(\R)$ for every $l\in\N_0$. The parameter $l$ corresponds to the angular momentum, that is, the degree of the spherical harmonic in the expansion of $\phi$.
	
	We make use of the following facts about the operator appearing in \eqref{ELQPT}.
	
	\begin{lemma}[Spectral analysis of lower eigenvalues]
		\label{pt}
		The lowest eigenvalue of the operator $-\partial_s^2-(q-1)u(\ \cdot -t)^{q-2}$ in $L^2(\R)$ is $-\frac{q^2}{4}\Lambda$ with corresponding eigenfunction $u(\ \cdot-t)^{\sfrac q2}$. Its second eigenvalue is $-\Lambda$ with corresponding eigenfunction $\partial_s u(\ \cdot-t)$. These eigenvalues are simple.
	\end{lemma}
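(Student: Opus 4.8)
The operator $H := -\partial_s^2 - (q-1)u^{q-2}$ (taking $t=0$ without loss of generality, by translation invariance) is a one-dimensional Schrödinger operator with the Pöschl--Teller potential $-(q-1)u^{q-2} = -(q-1)\beta^{q-2}(\cosh(\alpha s))^{-2}$. The key structural facts are the two known exact solutions already recorded in the excerpt: from \eqref{EL2}, $Hu^{q/2} = -\tfrac{q^2}{4}\Lambda\, u^{q/2}$, and from \eqref{EL1}, $H\partial_s u = -\Lambda\, \partial_s u$. Since $u^{q/2} = \beta^{q/2}(\cosh(\alpha s))^{-q/(q-2)}$ is everywhere positive and belongs to $H^1(\R)$ (the exponent $q/(q-2)>0$ guarantees exponential decay), it is the ground state eigenfunction, because a positive $L^2$-eigenfunction of a Schrödinger operator is necessarily the ground state and the ground state is simple. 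This gives the first assertion, including simplicity. It also shows that $-\tfrac{q^2}{4}\Lambda$ is \emph{strictly} below every other eigenvalue.

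The second assertion requires identifying $\partial_s u$ as the eigenfunction of the \emph{second} eigenvalue. First, $\partial_s u$ lies in $H^1(\R)$ and has exactly one sign change (at $s=0$), since $u$ is even and strictly decreasing on $(0,\infty)$; a standard Sturm oscillation argument then shows $\partial_s u$ is the eigenfunction of the second eigenvalue, and the second eigenvalue is simple. Alternatively, and perhaps more cleanly, one can invoke the explicit spectral theory of the Pöschl--Teller potential: the number of negative eigenvalues and their exact values $\theta$ are classical, and for the coupling constant corresponding to $u^{q-2}$ one reads off that precisely the two values $-\tfrac{q^2}{4}\Lambda$ and $-\Lambda$ lie below the bottom of the essential spectrum $[0,\infty)$ — note $\tfrac{q^2}{4}>1$ since $q>2$, so these are genuinely ordered and both negative. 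Either route pins down that $-\Lambda$ is the second eigenvalue with eigenfunction $\partial_s u$, and that it is simple.

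I would organize the write-up as: (i) reduce to $t=0$; (ii) verify $u^{q/2}\in H^1$ is positive and solves $Hu^{q/2}=-\tfrac{q^2}{4}\Lambda u^{q/2}$ via \eqref{EL2}, hence it is the simple ground state; (iii) verify $\partial_s u\in H^1$ solves $H\partial_s u = -\Lambda\partial_s u$ via \eqref{EL1}, and has a single nodal point; (iv) conclude by Sturm oscillation (or by the Pöschl--Teller eigenvalue count) that $-\Lambda$ is the second eigenvalue and is simple. The only mild subtlety — and the step I would be most careful about — is justifying why no eigenvalue sits strictly between $-\tfrac{q^2}{4}\Lambda$ and $-\Lambda$: the oscillation-theoretic argument handles this since an eigenfunction strictly between the ground state and $\partial_s u$ in the spectral ordering would have to have exactly one node and be orthogonal to $u^{q/2}$, forcing it to be proportional to $\partial_s u$; equivalently one cites the exact Pöschl--Teller spectrum. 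Everything else is a direct check using \eqref{EL1} and \eqref{EL2}, which are already available.
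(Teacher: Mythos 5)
Your proof is correct, and its first half coincides with the paper's: $u^{q/2}$ is a positive $L^2$-eigenfunction, hence the simple ground state with eigenvalue $-\frac{q^2}{4}\Lambda$. For the second eigenvalue you offer two routes --- Sturm oscillation theory applied to the one-noded eigenfunction $\partial_s u$, or citing the explicit P\"oschl--Teller spectrum --- and both are sound; in particular you correctly flag and close the only real gap, namely that no eigenvalue can sit strictly between $-\frac{q^2}{4}\Lambda$ and $-\Lambda$. The paper also acknowledges the P\"oschl--Teller reference but then gives a third, slightly different ``independent'' argument: since $\partial_s u(\,\cdot - t)$ is a one-sign $H^2\cap H^1_0$-solution on the halfline $(t,\infty)$, it is the ground state of the Dirichlet realization there, and for a 1D Schr\"odinger operator with even potential the whole-line eigenvalues alternate between the Neumann and Dirichlet eigenvalues of the halfline problem. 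That route uses the ground-state positivity criterion twice (once on $\R$, once on the halfline) and sidesteps the general oscillation theorem. Both arguments are equally short and correct; the paper's is marginally more self-contained (it only needs the positivity-implies-ground-state fact), while yours leans on a theorem that is perhaps more universally familiar. In either case the conclusion and its role in the paper are unaffected.
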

	
	\begin{solution}
		These facts are well-known (see, for instance, \cite[4.2.2.~Example: P\"oschl--Teller potentials]{FrLaWe}), but it is easy to give an independent proof. Indeed, \eqref{EL2} says that $u(\ \cdot-t)^{\sfrac q2}$ is a positive $H^2(\R)$-solution of an eigenvalue equation involving the operator. By general Schr\"odinger operator theory, this implies that $u^{\sfrac q 2}$ is the ground state, $-\frac{q^2}{4}\Lambda$ is the lowest eigenvalue, and this eigenvalue is simple. Similarly, \eqref{EL1} says that $\partial_s u(\ \cdot-t)$ is a negative $H^2((t,\infty))\cap H^1_0((t,\infty))$-solution of an eigenvalue equation, so it is the ground state of the Dirichlet realization of the operator on $(t,\infty)$. Since, in one dimension, the eigenvalues of a Schr\"odinger operator with even potential are alternatingly those of the Neumann and the Dirichlet realization, we obtain the assertion about the second eigenvalue.
	\end{solution}
	
	Let us return to the study of equation \eqref{ELQPT}. Note that we have neglected the rank one projector in $\mathcal L_\psi$ when writing \eqref{ELQPT}, which can be justified as follows. When $l\geq 1$, the contribution of the rank one operator vanishes, since $u(\ \cdot-t)$ is a radial (that is, independent of $\omega$) function. When $l=0$, the function $\psi$ is in the kernel of $\mathcal L_\psi$, as we have already observed. When looking for other elements $\phi$ in the kernel, we may thus subtract a suitable multiple of $\psi$ from $\phi$ and are led, in view of \eqref{EL}, to the equation \eqref{ELQPT} without rank one projector.
	
	For $l=0$ we have $\theta_0=-\Lambda$, which, by Lemma \ref{pt}, is the second eigenvalue of the operator. Thus, in this case the $L^2(\R)$-solution space of \eqref{ELQPT} is one-dimensional and spanned by $\partial_s u(\ \cdot-t)$. For $l= 1$ we have $\theta_1= -(d-1+\Lambda)=-\frac{q^2}4 \Lambda$ (as $\Lambda=\Lambda_{FS}$), which, by Lemma \ref{pt}, is the lowest eigenvalue of the operator. Thus, in this case the $L^2(\R)$-solution space of \eqref{ELQPT} is one-dimensional and spanned by $u(\ \cdot-t)^{\sfrac q2}$. Finally, for $l\geq 2$ we have $\theta_l<\theta_1$, and correspondingly there is no non-trival $L^2(\R)$-solution of \eqref{ELQPT}.
	
	Multiplied with a basis of spherical harmonics of the appropriate degree, we see that the kernel of the Hessian is spanned by $\{\psi, \partial_s \psi, \psi^{\sfrac{q}{2}}\omega_i, i=1,\dots, d\}$, as claimed.
	
	\subsection{Proof of Proposition \ref{prop2}}
	\emph{Step 1.} Proposition \ref{prop1} is applicable as $\|u_n\|^2\to C_{a,b}$ and $\|u_n\|_q\to1$ for $n\to\infty$. Passing to a subsequence, we thus obtain the decomposition 
	\[u_n=\lambda_n(u+r_n)(\ \cdot-t_n,\omega)\,,
	\qquad (s,\omega)\in\mathcal C\,,\]
	with the prescribed convergence and orthogonality properties. 
	Defining $\alpha_n\coloneqq\langle r_n, u^{\sfrac{q}{2}}\omega\rangle\in\R^d$ and $\mu_n:=|\alpha_n|$, we can choose an orthogonal matrix $D_n\in\mathcal{O}(d)$ such that $D_n\alpha_n = \mu_n e_d$. It follows that	
	\[ 
	\mu_n \omega_d\circ D_n= \alpha_n\cdot \omega
	\qquad\text{for all}\ \omega\in\Sph^{d-1}\,,
	\]where $\cdot$ denotes the scalar product in $\R^d$. We will abuse the notation slightly by writing $f\circ D_n=f (\ \cdot \ , D_n\ \cdot\ )$ for a function $f$ defined on $\mathcal C$. We define $\tilde R_n\in H^1(\mathcal C)$ to be
	\[\tilde R_n\coloneqq \left(r_n- \alpha_n\cdot\omega u^{\sfrac{q}{2}} \right)\circ D_n^{-1}= r_n\circ D_n^{-1}- \mu_n u^{\sfrac{q}{2}}\omega_d\,, \]
	so $\tilde R_n\circ D_n$ is the remainder term of projecting $r_n$ onto $u^{\sfrac{q}{2}}\omega_i$, $i=1,\dots, d$, in $H^1(\mathcal C)$. As $D_n^{-1}$ only rotates the basis $\{\omega_i\}_{i\in\{1,\dots,d\}}$, the set $\{\omega_i\circ D_n^{-1}\}_{i\in\{1,\dots,d\}}$ spans the spherical harmonics of degree $1$ as well. Therefore, we see that 
	\begin{equation}\label{ortho2}
		\langle \tilde R_n, u^{\sfrac{q}{2}}\omega_i\rangle=0\qquad \text{ for every } i\in\{1,\dots,d\}\,.
	\end{equation} Since $u^{\sfrac{q}{2}}\omega_d$ is orthogonal to $\operatorname{span}\{ u, \partial_s u \}$ in $H^1(\mathcal C)$, we can apply the orthogonality conditions for $r_n$ from Proposition \ref{prop1} to obtain the relations 
	\begin{equation*}
		\langle \tilde R_n, u\rangle=\langle \tilde R_n, \partial_s u\rangle=0\,.
	\end{equation*}
	
	\emph{Step 2.}
	Turning to the convergence properties, as $q>2$, we can expand 
	\begin{align*}
		\left||1+x|^q-1-qx-\frac{q(q-1)}{2}x^2\right|&\lesssim|x|^{q\wedge 3}+|x|^q
		\qquad&&\hspace{-1.8cm}\text{ uniformly in } x\,,
		\\ \left| |1+y|^{\sfrac{2}{q}}-1-\frac{2}{q}y\right|&\lesssim |y|^2 \qquad&&\hspace{-1.8cm}\text{ for } |y|\to 0\,.
	\end{align*}
	Applying the expansions above with $x=r_n u^{-1}$ and 
	\begin{equation*}
		y=\frac{q(q-1)}{2\|u\|^q_q}\int_{\mathcal C}u^{q-2}r_n^2\ \mathrm{d}(s,\omega)+\mathcal O_{n\to\infty}(\|r_n\|^{q\wedge 3}_{q\wedge 3}+\|r_n\|^{q}_{q})
	\end{equation*}
	leads to 
	\begin{align}\notag
		\mathcal \lambda_n^{-2}\|u_n\|^2_q
		&=\left(\int_{\mathcal C}u^q\left|1+\frac{r_n}{u}\right|^q \ \mathrm{d}(s,\omega)\right)^{\sfrac{2}{q}}\notag
		\\&\notag=\|u\|^2_q\left(1+\frac{q(q-1)}{2\|u\|_q^q}\int_{\mathcal C}u^{q-2}r_n^2 \ \mathrm{d}(s,\omega)+\mathcal O_{n\to\infty}(\|r_n\|^{q\wedge 3}_{q\wedge 3}+\|r_n\|^{q}_{q})\right)^{\sfrac{2}{q}}
		\\&\notag=\|u\|^2_q\left(1+\frac{(q-1)}{\|u\|_q^q}\int_{\mathcal C}u^{q-2}r_n^2 \ \mathrm{d}(s,\omega)\right)+\mathcal O_{n\to \infty}(\|r_n\|^{q\wedge 3})\,,
	\end{align}
	where the first order term in the penultimate step vanished due to orthogonality; see \eqref{ortho1}. In the last step, Sobolev embedding and $\|r_n\|\to 0$ for $n\to\infty$ simplified the error $\|r_n\|^{q\wedge 3}_{q\wedge 3}+\|r_n\|^{q}_{q}=\mathcal O_{n\to \infty} (\|r_n\|^{q\wedge 3})$. Using H\"older's inequality, we can verify that the Taylor expansion in use was indeed applicable as $ |y|^2=\mathcal O_{n\to \infty}(\|r_n\|^{q\wedge 3})$.
	Recalling that by orthogonality \[\|u_n\|^2=\lambda_n^2(\|u\|^2+\|r_n\|^2)\,,\] we are able to expand
	$\mathcal F$ to quadratic order:
	\begin{equation}
		\mathcal F(u_n)=\|u_n\|^2-C_{a,b}\|u_n\|_q^2
		=\frac{\lambda_n^2}{2}D^2_u\mathcal F(r_n)+\mathcal O_{n\to \infty} (\|r_n\|^{q\wedge 3})\,.\label{expan}
	\end{equation}
	Above, the terms of order zero vanish due to the optimality of $u$, and the rank one projector in $D^2_u\mathcal F$ can be added as $\langle r_n,u^{q-1}\rangle_2=0$.
	
	Since $\|r_n\|\to 0$ for $n\to\infty$, the expansion \eqref{expan} yields
	\begin{equation}\label{limQ}
		\lim_{n\to\infty}\frac{D^2_u\mathcal F(r_n)}{2\|r_n\|^2}=\lim_{n\to\infty}\frac{\mathcal F(u_n)}{\operatorname{dist}(u_n, \mathcal M)^2}=0\,,
	\end{equation}
	where we used our assumption \eqref{van} in the last step. We recall from Subsection \ref{subsec3.1} that the Hessian of $\mathcal F$ corresponds to an operator $\mathcal L_\psi$ in $L^2(\mathcal C)$. Since its essential spectrum starts at $\Lambda$ (by Weyl's theorem; see, e.g., \cite[Theorem 1.14]{FrLaWe}), we see that $D^2_u\mathcal F$ is positive definite on the $L^2(\mathcal C)$-orthogonal complement of $\operatorname{Ker}(D^2_u\mathcal F)$ and has a spectral gap above $0$, so
	\begin{equation*}
		D^2_u\mathcal F|_{\operatorname{Ker}(D^2_u\mathcal F)^\perp}\geq  \tilde c\|\cdot\|_2^2
	\end{equation*}
	for some $\tilde c>0$. The right side can be improved to the $H^1(\mathcal C)$-norm. To this end, let $\delta>0$ and $\phi\in\operatorname{Ker}(D^2_u\mathcal F)^\perp $. As $\|u\|^{q-2}_{L^{\infty}(\R)}\leq \frac{q}{2}\Lambda$, we can bound
	\begin{equation*}
		D^2_u\mathcal F(\phi)\geq \delta \|\phi\|^2+\left((1-\delta)\tilde c -\delta(q-1)\frac{q}{2}\Lambda\right)\|\phi\|_2^2\geq\delta \|\phi\|^2
	\end{equation*}
	for $\delta$ chosen small enough.
	Therefore, it follows immediately that
	\begin{equation*}
		D^2_u\mathcal F|_{\operatorname{Ker}(D^2_u\mathcal F)^\perp}\asymp\|\cdot\|^2\,,
	\end{equation*}
	that is, the induced norms are equivalent. Moreover, we know that $D^2_u\mathcal F(u^{\sfrac{q}{2}}\omega_d)=0$ by \eqref{KerHess} and $\langle \tilde R_n, u^{\sfrac{q}{2}}\omega_d\rangle=\langle \tilde R_n, u^{\sfrac{q}{2}}\omega_du^{q-2}\rangle_2=0$ by \eqref{ortho2} and \eqref{EL2}. Using this and \eqref{limQ}, we find that
	\begin{equation}\label{conv1}
		\frac{\| \tilde R_n\|^2}{\|r_n\|^2}\asymp\frac{D^2_u\mathcal F( \tilde R_n)}{\|r_n\|^2}=\frac{D^2_u\mathcal F(r_n)}{\|r_n\|^2}\to 0
	\end{equation}
	for $n\to\infty$. The orthogonality relations \eqref{ortho2} and the equation \eqref{EL2} for $u^{\sfrac{q}{2}}$ along with $\|\omega_d\|^2_{L^2(\mathbb S^{d-1})}=|\mathbb S^{d-1}| d^{-1}$ imply
	\begin{equation*}
		\| \tilde R_n\|^2+\frac{q-1}{d}\|u^{q-1}\|_2^2 \mu_n^2 =\|r_n\|^2\,.
	\end{equation*}
	This and the asymptotics \eqref{conv1} show that $\mu_n^2\|r_n\|^{-2} \to d((q-1) \|u^{q-1}\|^2_2)^{-1}$. It follows that $\mu_n=\mathcal O_{ n\to \infty}(\|r_n\|)$, and, unless $r_n=0$, we have $\mu_n\neq 0$ for all sufficiently large $n$. We finally set $R_n:=\mu_n^{-1}\tilde R_n$ when $\mu_n\neq 0$ (and $R_n=0$ when $\mu_n=0$). Then $\|R_n\|=o_{n\to \infty}(1)$, and the above orthogonality conditions for $\tilde R_n$ translate into orthogonality conditions for $R_n$.
	\qed
	
	\section{Non-vanishing of the quartic order}\label{sec4}
	
	\subsection{Quartic expansion of the deficit functional}
	
	As the denominator in \eqref{finalest} equals $\lambda_n^4 \|r_n\|^4$, which is comparable to $\mu_n^4$ for $n\to\infty$, we aim to expand the numerator to fourth order in $|\mu_n|$ and expect lower order terms to vanish. In fact, the decomposition \eqref{dec3} formally leads to a quartic expansion of the functional $\mathcal F$. However, we cannot control arbitrary perturbations $R_n$ in terms of $\mu_n$ yet. We only know that $\|R_n\|\to0$ for $n\to\infty$. In order to conduct perturbation theory, it would be more beneficial to have control of the $L^\infty$-norm of $R_n$ as suggested in \cite{Fr}. In general, an expansion of the $L^{q}$-norm up to fourth order requires $q\geq 4$. However, we will circumvent this issue by splitting the domain of integration in the $L^q$-norm: The remainder term $|R_n|$ is cut off by $u^{\sfrac{q}{2}}$, which stems from the non-trivial zero mode, allowing expansions to arbitrary order. This approach for $1$ instead of $u^{\sfrac{q}{2}}$ simplifies the computations for the quartic expansion in \cite{Fr}, and we expect it to be applicable to prove other degenerate stability results in the future.

	\begin{lemma}[Quartic order expansion of $\mathcal F$]\label{lem1}
		If $(u_n)_n$ is as in Proposition \ref{prop2}, we have in the notation of that proposition,
		\begin{equation*}
			\lambda_n^{-2}\mathcal F(u_n)=\lambda_n^{-2}(\|u_n\|^2-C_{a,b}\|u_n\|^2_q)
			\geq(A)+(B)+\mathcal O_{n\to \infty}(|\mu_n|^5)\,,
		\end{equation*}
		where for some $n$-independent constant $C<\infty$ we define
		\begin{align*}
			(A)&\coloneqq \mu_n^2\left((1-C|\mu_n|^{(q-2)\wedge 1})\|R_n\|^2-(q-1)\int_{\mathcal{C}}(u^{q-2}R_n^2+(q-2)\mu_n u^{2q-3}\omega_d^2R_n)\ \mathrm{d}(s,\omega)\right),\\
			(B)&\coloneqq\mu_n^4\frac{(q-1)(q-2)}{4}\left(\frac{q-1}{\|u\|_q^q}\left(\int_{\mathcal{C}}u^{2q-2}\omega_d^2\ \mathrm{d}(s,\omega)\right)^2-\frac{q-3}{3}\int_{\mathcal{C}}u^{3q-4}\omega_d^4\ \mathrm{d}(s,\omega)\right).
		\end{align*}
	\end{lemma}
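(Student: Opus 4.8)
The starting point is the decomposition \eqref{dec3} from Proposition~\ref{prop2}, which gives
\[
\lambda_n^{-1} u_n(\ \cdot + t_n, D_n^{-1}\ \cdot \ ) = u + \mu_n u^{\sfrac q2}\omega_d + \mu_n R_n \,,
\]
with $\mu_n\to 0$, $\|R_n\|\to 0$, and $R_n$ orthogonal (in $H^1(\mathcal C)$, equivalently in the weighted $L^2$-senses \eqref{eq:prop1orthol2}) to $u$, $\partial_s u$, and all $u^{\sfrac q2}\omega_i$. Since $\mathcal F$ is invariant under translations and rotations and scales as $\lambda^2$, we have $\lambda_n^{-2}\mathcal F(u_n) = \mathcal F(u + \mu_n(u^{\sfrac q2}\omega_d + R_n))/\lambda_n^2 \cdot \lambda_n^2$... more precisely $\lambda_n^{-2}\mathcal F(u_n)=\|u+\mu_n w_n\|^2 - C_{a,b}\|u+\mu_n w_n\|_q^2$ with $w_n\coloneqq u^{\sfrac q2}\omega_d + R_n$. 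I would expand each of the two terms separately in powers of $\mu_n$, keeping $R_n$ exact.

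**The $H^1$-term.** This is the easy half: $\|u+\mu_n w_n\|^2 = \|u\|^2 + 2\mu_n\langle u, w_n\rangle + \mu_n^2\|w_n\|^2$. The cross term vanishes because $\langle u, u^{\sfrac q2}\omega_d\rangle=0$ (angular integral) and $\langle u, R_n\rangle = 0$ by orthogonality. Expanding $\|w_n\|^2 = \|u^{\sfrac q2}\omega_d\|^2 + 2\langle u^{\sfrac q2}\omega_d, R_n\rangle + \|R_n\|^2 = \|u^{\sfrac q2}\omega_d\|^2 + \|R_n\|^2$, again by orthogonality. Using \eqref{EL2} and $-\Delta_\omega\omega_d = (d-1)\omega_d$ one rewrites $\|u^{\sfrac q2}\omega_d\|^2 = \langle u^{\sfrac q2}\omega_d, \mathcal L\text{-free operator}\,u^{\sfrac q2}\omega_d\rangle_2 + \ldots$; this contributes the $\mu_n^2$ coefficient $\frac{q-1}{d}\|u^{q-1}\|_2^2$ (as already computed in the proof of Proposition~\ref{prop2}), which will recombine with the $L^q$-expansion to reproduce $D^2_u\mathcal F$. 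So the $H^1$-term gives an \emph{exact} contribution $\|u\|^2 + \mu_n^2(\|u^{\sfrac q2}\omega_d\|^2 + \|R_n\|^2)$.

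**The $L^q$-term.** This is where the care is needed, and the point flagged in the text: for $2<q<4$ a naive fourth-order Taylor expansion of $t\mapsto |t|^q$ is not valid without an $L^\infty$-bound on the perturbation, which we do not have for $R_n$. The trick is to write $u + \mu_n w_n = u(1 + \mu_n \omega_d u^{\sfrac q2 - 1} + \mu_n R_n u^{-1})$ and split $\mathcal C$ into the region where $|\mu_n R_n u^{-1}|$ and $|\mu_n \omega_d u^{\sfrac q2-1}|$ are small versus not; but more efficiently, since the quantity $u^{\sfrac q2}\omega_d$ decays like $u^{\sfrac q2}$ while $u$ itself decays like $u$, the combination $\omega_d u^{\sfrac q2-1}$ is \emph{bounded} (because $\sfrac q2 - 1 > 0$, so $u^{\sfrac q2-1}$ is bounded), i.e.\ the non-trivial zero mode ``cuts off'' the perturbation geometrically. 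One writes $|u+\mu_n w_n|^q = u^q|1 + \mu_n\omega_d u^{\sfrac q2-1} + \mu_n R_n u^{-1}|^q$ and expands the $\omega_d$-part (which is a bounded multiplier, so expandable to any order) exactly to fourth order, while the $R_n$-part is expanded only to the order for which one has $L^q$-integrability — producing the terms linear and quadratic in $R_n$ visible in $(A)$, and the error $\mathcal O(|\mu_n|^{q\wedge 3}\cdot)$ absorbed. Concretely I would use the elementary bounds $\big||1+x+y|^q - \sum_{k\le 4}\binom{q}{k}(x+y)^k\big| \lesssim |x|^5 + |x+y|^q + |y|^{q\wedge 3}(1+|x|)^{\ldots}$, valid once $|x|\le C$ (our case with $x=\mu_n\omega_d u^{\sfrac q2-1}$), then integrate against $u^q$, using $\int u^q|R_n u^{-1}|^2 = \int u^{q-2}R_n^2 \lesssim \|R_n\|^2$ (Hardy/Sobolev on $\mathcal C$) and $\int u^q |R_n u^{-1}|^{q\wedge 3} \lesssim \|R_n\|^{q\wedge 3}$, and $\|R_n\| = \mathcal O(|\mu_n|)$. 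Tracking the purely-$\omega_d$ terms at orders $\mu_n^2$ and $\mu_n^4$ gives, via $C_{a,b}=\|u\|_q^{q-2}$ and the identity $(1+z)^{\sfrac2q} = 1 + \frac2q z - \frac{q-1}{q^2}\cdot\frac{2}{q}\ldots$ (expansion of the outer power $\sfrac2q$ to second order, legitimate since the bracket $\to 1$), exactly the terms displayed in $(A)$ and $(B)$; the cross terms $\mu_n^3$ vanish by the $\omega_d\mapsto-\omega_d$ parity of the angular integral. The inequality (rather than equality) in the statement comes only from bounding $(1-C|\mu_n|^{(q-2)\wedge1})\|R_n\|^2 \le \|R_n\|^2$ in the coefficient of the term quadratic in $R_n$, i.e.\ from lower-bounding the genuinely-$R_n$-dependent part while the $R_n$-free part $(B)$ is kept exactly.

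**Main obstacle.** The delicate step is the $L^q$-expansion with $2<q<4$: making rigorous the claim that the non-trivial zero mode $u^{\sfrac q2}\omega_d$ acts as a bounded multiplier of $u$ (so its contribution is expandable to fourth order) while the uncontrolled remainder $R_n$ is only expandable to order $q\wedge 3$, and checking that all the resulting error terms are genuinely $\mathcal O(|\mu_n|^5)$ — this requires juggling the three small quantities $\mu_n$, $\|R_n\|$, and the pointwise size of $R_n u^{-1}$, and using $\|R_n\|\lesssim|\mu_n|$ together with weighted Sobolev/Hardy inequalities on the cylinder to bound $\int u^{q-j}|R_n|^j$. Everything else — the $H^1$-term, the vanishing of odd-order terms by angular parity, the vanishing of first-order terms by orthogonality — is a routine, if lengthy, bookkeeping computation.
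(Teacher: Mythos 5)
Your overall plan is morally the same as the paper's: use the decomposition from Proposition~\ref{prop2}, expand the $H^1(\mathcal C)$-norm exactly, factor out $u^q$ inside the $L^q$-integral, and exploit that the non-trivial zero mode contributes a \emph{bounded} multiplier $\omega_d u^{\sfrac q2 - 1}$ while $R_n u^{-1}$ can only be expanded to low order. You also correctly identify the origin of the factor $1-C|\mu_n|^{(q-2)\wedge 1}$. The paper implements the cut-off differently (splitting $\mathcal C$ into $\{|R_n| < u^{\sfrac q2}\}$ and $\{|R_n| \geq u^{\sfrac q2}\}$ and expanding to orders $4$ and $2$ respectively), but your idea of a pointwise inequality is a reasonable alternative. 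However, there are two real gaps.

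First, the appeal to $\|R_n\|\lesssim|\mu_n|$ is circular. Proposition~\ref{prop2} gives only $\|R_n\|\to 0$, with no rate; a bound $\|R_n\|\lesssim|\mu_n|$ is a \emph{consequence} of the coercivity established via Lemma~\ref{lem1} and the later energy lemmas, not an input. This matters because the residual error in the $L^q$-expansion is naturally of size $\mathcal O(|\mu_n|^{q\wedge 3}\|R_n\|^2)$, which is \emph{not} $\mathcal O(|\mu_n|^5)$ without such an a priori bound. The paper resolves this by not trying to push everything into $\mathcal O(|\mu_n|^5)$: the $\mathcal O(|\mu_n|^{q\wedge 3}\|R_n\|^2)$-error is instead absorbed into the coefficient of $\mu_n^2\|R_n\|^2$ in $(A)$, which is precisely why that coefficient is $1-C|\mu_n|^{(q-2)\wedge 1}$ rather than $1$. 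Your final paragraph contradicts your own (correct) observation in the preceding one that this coefficient is the source of the inequality sign.

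Second, the sketched pointwise inequality $\bigl||1+x+y|^q - \sum_{k\le 4}\binom{q}{k}(x+y)^k\bigr|\lesssim\ldots$ is problematic because the left side retains the full polynomial $(x+y)^4$, which, after setting $y=\mu_n R_n u^{-1}$ and integrating against $u^q$, produces $\int_{\mathcal C} u^{q-4} R_n^4$; for $2<q<4$ this is not controlled by $\|R_n\|$. The usable expansion must keep, beyond order $2$, only the monomials $x^3$, $x^2 y$ (order $3$) and $x^4$ (order $4$), and discard $xy^2, y^3, x^3y, x^2y^2,\ldots$ into the error. The paper's domain split makes this discarding straightforward (on $\{|R_n|\ge u^{\sfrac q2}\}$ one replaces a factor $u^{\sfrac q2}$ by $|R_n|$; on the complement the full fourth-order Taylor is justified and the dropped mixed terms are estimated directly). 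Your write-up gestures at this asymmetry verbally, but the concrete inequality you wrote does not encode it, and as written it cannot be integrated.
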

	
	It is instructive to consider this result in view of the secondary non-degeneracy condition in Definition \ref{def:secnondeg}. Setting $\phi=\mu_n R_n$ and $g=\mu_n u^{\sfrac{q}{2}}\omega_d$, $(A)$ corresponds (to leading order in $\mu_n$) to the bilinear and linear part   in $\phi$ of \eqref{secnondeg1}, that is, $2^{-1}(D_u^2\mathcal F(\phi,\phi)+D_u^3\mathcal F(g,g,\phi))$, while $(B)$ is the constant part $24^{-1}D_u^4\mathcal F(g,g,g,g)$. Hence, the main goal of the next subsection is to bound them together from below by a positive constant.
	
	\begin{solution}
		As the prerequisites for Proposition \ref{prop2} are satisfied, the decomposition \eqref{dec3} is available.
		Exploiting the translation and rotation invariance of the assumptions and the expansion in the theorem, we may assume without loss of generality that
		\begin{equation*}
			u_n=\lambda_n (u+\mu_n(u^{\sfrac{q}{2}}\omega_d+R_n))\,.
		\end{equation*}
		We consider the set of points in $\mathcal C$ where $|R_n|<u^{\sfrac{q}{2}}$ and the one where $|R_n|\geq u^{\sfrac{q}{2}}$, separately.
		
		\emph{Step 1.}
		In the set where $|R_n|<u^{\sfrac{q}{2}}$ we have \[|\mu_n||u^{\sfrac{q}{2}}\omega_d+R_n|u^{-1}\leq2|\mu_n|u^{\sfrac{q}{2}-1}\leq 2|\mu_n|\left(\frac{q}{2}\Lambda \right)^{\sfrac{1}{2}}\leq \frac{1}{2}\]
		for $n$ large enough. Then we can write 
		\begin{equation}\label{factout}
			|\lambda_n|^{-q}|u_n|^q= u^q|1+\mu_n(u^{\sfrac{q}{2}}\omega_d+R_n)u^{-1}|^q
		\end{equation} 
		and expand the last term around $1$.
		Applying the $L^q(\{|R_n|<u^{\sfrac{q}{2}}\})$-norm to \eqref{factout}, the order of the error in $|\mu_n|$ of the expansion on the right side is preserved since $u^{q}$ is integrable. Thus, we may expand the $L^q(\{|R_n|<u^{\sfrac{q}{2}}\})$-norm to arbitrary order. We will only need an expansion to quartic order:
		\begin{align}
			|\lambda_n|^{-q}& \int_{\{|R_n|<u^{\sfrac{q}{2}}\}}|u_n|^q\ \mathrm{ d}(s,\omega) =\int_{\{|R_n|<u^{\sfrac{q}{2}}\}}u^q \bigg(1+q\mu_n(u^{\sfrac{q}{2}}\omega_d+R_n)u^{-1}\notag\\&
			+\frac{1}{2}q(q-1)\mu^2_n(u^{\sfrac{q}{2}}\omega_d+R_n)^2u^{-2}+\frac{1}{6}q(q-1)(q-2)\mu^3_n ((u^{\sfrac{q}{2}}\omega_d)^3 +3(u^{\sfrac{q}{2}}\omega_d)^2 R_n)u^{-3}\notag	\\&+\frac{1}{24}q(q-1)(q-2)(q-3)\mu^4_n(u^{\sfrac{q}{2}}\omega_d)^4u^{-4}\bigg)\ \mathrm{ d}(s,\omega)+ \mathcal O_{n\to \infty}(|\mu_n|^5+ |\mu_n|^3 \|R_n\|^2)\,,\label{quartexp}
		\end{align}
		where we absorbed some of the third and fourth order terms into the error. In particular, we used
		\[\left|\mu_n^4\int_{\{|R_n|<u^{\sfrac{q}{2}}\}} (u^{\sfrac{q}{2}}\omega_d)^3 R_n u^{q-4}\ \mathrm{d}(s,\omega)\right|\lesssim |\mu_n|^4\|R_n\|_2\lesssim|\mu_n|^5+ |\mu_n|^3 \|R_n\|^2\,.\]
		
		\emph{Step 2.} We can surely expand the $L^q$-norm to second order for $q>2$ on $\{|R_n|\geq u^{\sfrac{q}{2}}\}$:
		\begin{align}
			|\lambda_n|^{-q} &\int_{\{|R_n|\geq u^{\sfrac{q}{2}}\}}|u_n|^q\ \mathrm{ d}(s,\omega)\notag \\&=\int_{\{|R_n|\geq u^{\sfrac{q}{2}}\}} \bigg(u^q+q\mu_n(u^{\sfrac{q}{2}}\omega_d+R_n)u^{q-1}+\frac{1}{2}q(q-1)\mu^2_n(u^{\sfrac{q}{2}}\omega_d+R_n)^2u^{q-2}
			\bigg)\mathrm{ d}(s,\omega)\notag\\&+ \mathcal O_{n\to \infty}\left( \int_{\{|R_n|\geq u^{\sfrac{q}{2}}\}}\left(|\mu_n|^{3\wedge q}  |u^{\sfrac{q}{2}}\omega_d+R_n|^{3\wedge q}+|\mu_n|^{ q} |u^{\sfrac{q}{2}}\omega_d+R_n|^{ q}\right)\ \mathrm{ d} (s,\omega)\right),\label{quadexp}
		\end{align}
		where we used that $u^{q-(q\wedge3)}\lesssim 1$ uniformly in $s$ in the argument of $\mathcal O_{n\to \infty}(\ \cdot\ )$. Denoting $p\in\{q,3\wedge q\}\subset (2, 2^*)$, we realize that
		\begin{equation}
			\int_{\{|R_n|\geq u^{\sfrac{q}{2}}\}}|\mu_n|^{ p} |u^{\sfrac{q}{2}}\omega_d+R_n|^{ p}\ \mathrm{ d} (s,\omega)\lesssim 	|\mu_n|^{ p} \|R_n\|_p^p \lesssim 	|\mu_n|^{ p} \|R_n\|^p\,. \label{est1}
		\end{equation}
		As $u^{\sfrac{q}{2}}\leq |R_n| $ holds pointwise and $(u^{\sfrac{q}{2}-1})^m$ is integrable for every $m>0$, we obtain 
		\begin{align}
			\frac{1}{6}q(q-1)(q-2)\mu_n^3	&\int_{\{|R_n|\geq u^{\sfrac{q}{2}}\}} \!\! u^{q-3}(u^{\sfrac{q}{2}}\omega_d)^2 \left((u^{\sfrac{q}{2}}\omega_d)+3 R_n	+\frac{1}{4}(q-3)\mu_n(u^{\sfrac{q}{2}}\omega_d)^2u^{-1}\right) \mathrm{ d}(s,\omega)\notag \\
			&=
			\mathcal O_{n\to \infty}(|\mu_n|^3 \|R_n\|^2)\,.\label{est2}
		\end{align}
		Inserting \eqref{est1} and \eqref{est2} into \eqref{quadexp} gives us a quartic expansion as in \eqref{quartexp} but over $\{|R_n|\geq u^{\sfrac{q}{2}}\}$ and with an additional error $\mathcal O_{n\to \infty}(\|\mu_nR_n\|_q^q+\|\mu_nR_n\|_{q\wedge3}^{q\wedge3})$. 
		
		\emph{Step 3.} By simple manipulations, we may reduce the overall error for the quartic expansion of the $L^q(\mathcal C)$-norm to $\mathcal O_{n\to \infty}(|\mu_n|^5+|\mu_n|^{q\wedge3}\|R_n\|^2)$. Using this expansion, we compute
		\begin{align*}
			\lambda_n^{-2}\|u_n\|^2_q=\|u\|_q^2 \bigg(1&+\|u\|_q^{-q}\int_{\mathcal C}\bigg( \! (q-1)\mu^2_n(u^{q}\omega^2_d+R^2_n)u^{q-2}
			\! +(q-1)(q-2)\mu^3_n(u^{\sfrac{q}{2}}\omega_d)^2 R_nu^{q-3}\notag	\\&+\frac{1}{12}(q-1)(q-2)(q-3)\mu^4_n(u^{\sfrac{q}{2}}\omega_d)^4u^{q-4}\bigg)\ \mathrm{ d}(s,\omega)\\&-\frac{1}{4}(q-2)\|u\|_q^{-2q}(q-1)^2\mu^4_n\left(\int_{\mathcal C}u^{q}\omega_d^2u^{q-2}\notag\ \mathrm{ d}(s,\omega)\right)^2\bigg)\\&+ \mathcal O_{n\to \infty}(|\mu_n|^5+ |\mu_n|^{q\wedge3} \|R_n\|^2)\,,
		\end{align*}
		where the whole first order term, the mixed term of second order, and the term including $\omega_d^3$ of third order vanish due to orthogonality relations of the spherical harmonic $\omega_d$ and $R_n$; see Proposition \ref{prop2}.
		Together with the expansion of the $H^1(\mathcal C)$-norm,
		\[	\lambda_n^{-2}\|u_n\|^2=\|u\|^2+\mu_n^2(\|u^{\sfrac{q}{2}}\omega_d\|^2+\|R_n\|^2)\,,\]
		we obtain the desired expansion of the functional $\mathcal F$ to quartic order. Note that the terms of order zero vanish by optimality of $u$, and the $R_n$-independent second order terms cancel due to equation \eqref{EL2}. The stated lower bound now follows from replacing $\mathcal O_{n\to \infty}(|\mu_n|^{q\wedge3} \|R_n\|^2)$ by a lower bound of the form $-C|\mu_n|^{q\wedge3} \|R_n\|^2$, $C<\infty$.
	\end{solution}
	
	
	\subsection{Bounding $(A) + (B)$ from below}
	
	In order to prove Proposition \ref{prop3}, we are going to estimate $(A) + (B)$ from Lemma \ref{lem1} from below by the leading order $\mu_n^4$ times a positive $(q,d)$-dependent constant. To this end, we will use two well-known identities frequently hereafter:
	\begin{equation}\label{ident}
		\int_{\mathbb S^{d-1}}\omega_d^{n}\ \mathrm{ d} \omega= |\mathbb S^{d-1}|\frac{\Gamma\left(\frac{d}{2}\right)\Gamma\left(\frac{n+1}{2}\right)}{\Gamma\left(\frac{1}{2}\right)\Gamma\left(\frac{d+n}{2}\right)}\mathbbm 1_{\{n\in 2\N_0\}}\text{ and }\int_\R \frac{|\sinh(s)|^n}{\cosh^\nu(s)}\ \mathrm{ d} s= \frac{\Gamma\left(\frac{\nu-n}{2}\right)\Gamma\left(\frac{n+1}{2}\right)}{\Gamma\left(\frac{\nu+1}{2}\right)}
	\end{equation}
	for $\nu>n$, $n\in \N_0$. After passing to spherical coordinates, the first integral follows from \cite[3.621, Eq.~5]{GrRy}, while the second one is a direct consequence of \cite[3.512, Eq.~2]{GrRy}.
	
	Let us start with $(B)$. It is independent of $R_n$ and can thus be computed explicitly by means of the identities \eqref{ident}. We find that
	\begin{equation}
		(B)=\mu_n^4\frac{\beta^{3q-4}}{\alpha}\frac{|\mathbb S^{d-1}|}{4d^2}\frac{\Gamma\left(\frac{3q-4}{q-2}\right)\sqrt{\pi}}{\Gamma\left(\frac{3q-4}{q-2}+\frac{1}{2}\right)} (q-1)(q-2)  \left(\frac{q(5q-6)}{2(3q-2)}-\frac{d(q-3)}{(d+2)}\right).\label{Bcomp}
	\end{equation}
	
	In order to estimate $(A)$, we will expand $R_n$ in spherical harmonics,
	\begin{equation}\label{sphdec}
		R_n(s,\omega)=\sum_{l,m} a_{l,m}(s) Y_{l,m}(\omega)\,,
	\end{equation}
	where $a_{l,m}$ are $s$-dependent coefficients and $(Y_{l,m})_m$ is an $L^2(\mathbb S^{d-1})$-orthonormal basis of spherical harmonics of degree $l$, which we introduced in Section \ref{sec3}. We will choose \begin{equation}\label{Y}
		Y_{0,0}\coloneqq\frac{1}{\sqrt{|\mathbb S^{d-1}|}}\qquad \text{ and }\qquad
		Y_{2,0}(\omega_d)\coloneqq\sqrt{\frac{d^2(d+2)}{2(d-1)|\mathbb S^{d-1}|}} \left(\omega_d^2-\frac{1}{d}\right),
	\end{equation} where the normalizing constants can be computed using the first identity from \eqref{ident}. The $L^2(\mathbb S^{d-1})$-orthogonality of the spherical harmonics $Y_{l,m}$ allows us to state the $L^2(\mathcal C)$-orthogona\-lity conditions from Proposition \ref{prop2} (see \eqref{eq:prop1orthol2}) in terms of the coefficients:
	\begin{equation}\label{ortho3}
		\langle a_{0,0}, u^{q-1} \rangle_{L^2(\R)}=\langle a_{0,0}, (q-1) u^{q-2}\partial_s u \rangle_{L^2(\R)}=\langle a_{1,m}, (q-1) u^{q-2}u^{\sfrac{q}{2}} \rangle_{L^2(\R)}=0\,.
	\end{equation} 
	
	There will be two lemmas that provide sharp lower bounds on $(A)$. The first estimate says that, except for $l=0,m=0$ and $l=2,m=0$, the contribution of the coefficients is bounded from below by $0$. The other one takes care of the missing cases by a `completing the square'-argument.
	
	\begin{lemma}[Negligible energies]\label{lem2}
		Inserting \eqref{sphdec} for $R_n$, we have for any $n\in\N$,
		\begin{equation}\label{pos}
			(A)\geq \mu_n^4\sum_{l\in\{0,2\}}\mathcal E^{(l)}\left(\frac{a_{l,0}}{\mu_n}\right),
		\end{equation} 
		where, for $l\in\{0,2\}$ and $g\in H^1(\R)$,
		\[\mathcal E^{(l)}(g)\coloneqq \int_\R ((1-C|\mu_n|^{(q-2)\wedge1})((\partial_s g)^2+(2d\mathbbm1_{\{l=2\}}(l)+\Lambda)g^2)-(q-1)u^{q-2}g^2 -2 f^{(l)}g) \ \mathrm{d}s\]
		with
		\[ f^{(l)}\coloneqq M u^{2q-3} \left(\sqrt{\frac{2(d-1)}{d+2}}\mathbbm1_{\{l=2\}}(l)+\mathbbm1_{\{l=0\}}(l)\right),
		\qquad
		M\coloneqq(q-1)(q-2) \frac{\sqrt{|\mathbb S^{d-1}|}}{2d}\,.
		\]
	\end{lemma}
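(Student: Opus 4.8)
\begin{solution}
	The plan is to diagonalise the quadratic part of $(A)$ in the spherical-harmonics expansion \eqref{sphdec}, to isolate the only two coefficient functions that overlap with $\omega_d^2$, and then to discard every other angular sector by a spectral argument built on Lemma~\ref{pt} and the orthogonality relations of Proposition~\ref{prop2}.

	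\emph{Step 1 (reduction to one-dimensional forms).} Inserting \eqref{sphdec} into the definition of $(A)$ and using $-\Delta_\omega Y_{l,m}=l(l+d-2)Y_{l,m}$, the $L^2(\mathbb S^{d-1})$-orthonormality of the $Y_{l,m}$, and the radiality of $u$, one gets $\|R_n\|^2=\sum_{l,m}\|a_{l,m}\|_{H^1_l}^2$ and $\int_{\mathcal C}u^{q-2}R_n^2\,\mathrm d(s,\omega)=\sum_{l,m}\int_\R u^{q-2}a_{l,m}^2\,\mathrm ds$, where $\|a\|_{H^1_l}^2\coloneqq\int_\R\!\big((\partial_s a)^2+(l(l+d-2)+\Lambda)a^2\big)\,\mathrm ds$. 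Since $\omega_d^2$ is a polynomial of degree two, \eqref{Y} yields $\omega_d^2=\tfrac{\sqrt{|\mathbb S^{d-1}|}}{d}\,Y_{0,0}+\sqrt{\tfrac{2(d-1)|\mathbb S^{d-1}|}{d^2(d+2)}}\,Y_{2,0}$, so only $a_{0,0}$ and $a_{2,0}$ survive in $\int_{\mathcal C}u^{2q-3}\omega_d^2R_n\,\mathrm d(s,\omega)$. Writing $\tau_n\coloneqq 1-C|\mu_n|^{(q-2)\wedge1}$ and $Q^{(l)}(a)\coloneqq\tau_n\|a\|_{H^1_l}^2-(q-1)\int_\R u^{q-2}a^2\,\mathrm ds$, the definition of $(A)$ becomes
	\[
	\mu_n^{-2}(A)=\sum_{l,m}Q^{(l)}(a_{l,m})-(q-1)(q-2)\,\mu_n\Big(\tfrac{\sqrt{|\mathbb S^{d-1}|}}{d}\langle a_{0,0},u^{2q-3}\rangle_{L^2(\R)}+\sqrt{\tfrac{2(d-1)|\mathbb S^{d-1}|}{d^2(d+2)}}\,\langle a_{2,0},u^{2q-3}\rangle_{L^2(\R)}\Big).
	\]

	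\emph{Step 2 (the negligible sectors).} I claim $Q^{(l)}(a_{l,m})\ge0$ whenever $l\ge1$ and $(l,m)\ne(2,0)$; since only $a_{0,0}$ and $a_{2,0}$ enter the linear term, discarding all such sectors will give \eqref{pos}. Rewrite $Q^{(l)}(a)=\langle a,\mathcal L^{(l)}a\rangle_{L^2(\R)}-(1-\tau_n)\|a\|_{H^1_l}^2$ with $\mathcal L^{(l)}\coloneqq-\partial_s^2+(l(l+d-2)+\Lambda)-(q-1)u^{q-2}$. For $l\ge2$, Lemma~\ref{pt} gives $-\partial_s^2-(q-1)u^{q-2}\ge-\tfrac{q^2}{4}\Lambda$, so on the FS-curve, where $\tfrac{q^2}{4}\Lambda=d-1+\Lambda$, one has $\mathcal L^{(l)}\ge l(l+d-2)-(d-1)\ge d+1>0$; combined with the pointwise bound $(q-1)u^{q-2}\le\tfrac{q(q-1)}{2}\Lambda$ this gives $\|a\|_{H^1_l}^2\le K\,\langle a,\mathcal L^{(l)}a\rangle_{L^2(\R)}$ for an $l$-independent $K<\infty$, hence $Q^{(l)}(a)\ge(1-(1-\tau_n)K)\langle a,\mathcal L^{(l)}a\rangle_{L^2(\R)}\ge0$ once $n$ is large. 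For $l=1$, the operator $\mathcal L^{(1)}$ is nonnegative with one-dimensional kernel $\operatorname{span}\{u^{\sfrac{q}{2}}\}$ (this is the $l=1$ part of \eqref{KerHess}, via \eqref{EL2}) and, by Lemma~\ref{pt}, the eigenvalue $0$ is simple; the orthogonality relation $\langle R_n,u^{\sfrac{q}{2}}\omega_i\rangle=0$ of Proposition~\ref{prop2} says precisely that each $a_{1,m}$ is orthogonal to $u^{\sfrac{q}{2}}$ with respect to $\|\cdot\|_{H^1_1}$. Passing to the compact operator $(-\partial_s^2+d-1+\Lambda)^{-1/2}(q-1)u^{q-2}(-\partial_s^2+d-1+\Lambda)^{-1/2}$, whose largest eigenvalue is the simple eigenvalue $1$, the spectral theorem yields $\langle a_{1,m},\mathcal L^{(1)}a_{1,m}\rangle_{L^2(\R)}\ge\kappa\,\|a_{1,m}\|_{H^1_1}^2$ with $\kappa\in(0,1)$ independent of $n$, so that $Q^{(1)}(a_{1,m})\ge(\kappa-(1-\tau_n))\|a_{1,m}\|_{H^1_1}^2\ge0$ for $n$ large.

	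\emph{Step 3 (collecting terms) and the main obstacle.} Dropping the nonnegative contributions from Step~2 and substituting $g\coloneqq a_{l,0}/\mu_n$ for $l\in\{0,2\}$ — so that $Q^{(l)}(a_{l,0})=\mu_n^2Q^{(l)}(g)$ and $\mu_n\langle a_{l,0},u^{2q-3}\rangle_{L^2(\R)}=\mu_n^2\langle g,u^{2q-3}\rangle_{L^2(\R)}$ — the identity of Step~1 becomes
	\[
	\mu_n^{-4}(A)\ge\sum_{l\in\{0,2\}}\Big(Q^{(l)}\big(\tfrac{a_{l,0}}{\mu_n}\big)-2\big\langle\tfrac{a_{l,0}}{\mu_n},f^{(l)}\big\rangle_{L^2(\R)}\Big),
	\]
	since the displayed definitions of $M$ and $f^{(l)}$ give $2f^{(0)}=(q-1)(q-2)\tfrac{\sqrt{|\mathbb S^{d-1}|}}{d}u^{2q-3}$ and $2f^{(2)}=(q-1)(q-2)\sqrt{\tfrac{2(d-1)|\mathbb S^{d-1}|}{d^2(d+2)}}u^{2q-3}$, matching the linear terms exactly. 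As $l(l+d-2)=2d\,\mathbbm1_{\{l=2\}}$ for $l\in\{0,2\}$, each summand equals $\mathcal E^{(l)}(a_{l,0}/\mu_n)$, which is \eqref{pos}. The one genuinely delicate step is the $l=1$ case of Step~2: there $\mathcal L^{(1)}$ is merely nonnegative — vanishing on the non-trivial zero mode $u^{\sfrac{q}{2}}\omega_i$ — so one must use the orthogonality of Proposition~\ref{prop2} \emph{in the $\ell=1$-weighted inner product} together with the simplicity of that zero mode to extract a quantitative gap $\kappa>0$ robust against the correction $1-\tau_n$ inherited from $(A)$. Everything else is routine bookkeeping with the normalisations of \eqref{Y} and with the identity $\tfrac{q^2}{4}\Lambda=d-1+\Lambda$ on the FS-curve.
\end{solution}
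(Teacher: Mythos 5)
Your proof is correct and follows essentially the same route as the paper: expand $R_n$ in spherical harmonics, collect the $l=0$ and $l=2,\,m=0$ sectors, which are the only ones coupling to $\omega_d^2$, and discard all other sectors as nonnegative using Lemma~\ref{pt} and the FS-curve identity $\tfrac{q^2}{4}\Lambda=d-1+\Lambda$. The only cosmetic difference is in the $l=1$ sector: where the paper completes the square by subtracting the $L^2(\R)$-projection onto $u^{q/2}$ and invokes the second eigenvalue from Lemma~\ref{pt}, you instead phrase the orthogonality in the $H^1_1$-inner product and extract the spectral gap from the Birman--Schwinger operator; both rest on the simplicity of the zero mode $u^{q/2}$ and lead to the same conclusion.
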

	
	Here to lighten the notation, we do not reflect the dependence of $\mathcal E^{(l)}$ on $n$. Since $\mu_n\to 0$, this dependence will be weak.
	
	\begin{solution}
		By means of \eqref{Y}, we expand $\omega_d^2$ in spherical harmonics:
		\[\omega_d^2=(\omega_d^2-d^{-1})+d^{-1}=\sqrt{\frac{2(d-1)|\mathbb S^{d-1}|}{d^2(d+2)}}Y_{2,0}+\frac{\sqrt{|\mathbb S^{d-1}|}}{d}Y_{0,0}\,.\]
		As a result, expanding $R_n$ in spherical harmonics as well, we can write $(A)$ as 
		\begin{align}
			\notag\mu_n^2&\left((1-C |\mu_n|^{(q-2)\wedge1})\|R_n\|^2-(q-1)\int_{\mathcal{C}}\left(u^{q-2}R_n^2\ +(q-2)\mu_n u^{2q-3}\omega_d^2R_n\right)\, \mathrm{d}(s,\omega)\right)\\&\notag
			=\mu_n^2\int_\R \Bigg( \sum_{l,m} \Bigg( (1-C |\mu_n|^{(q-2)\wedge1})((\partial_s a_{l,m})^2+(l(l+d-2)+\Lambda)a_{l,m}^2)-(q-1)u^{q-2} a_{l,m}^2 \Bigg) \\&
			\qquad\qquad -(q-1)(q-2)\mu_n \frac{\sqrt{|\mathbb S^{d-1}|}}{d} \left(u^{2q-3}\sqrt{\frac{2(d-1)}{d+2}} a_{2,0}+ u^{2q-3} a_{0,0}\right)\Bigg) \, \mathrm{d}s \,. \label{sphbound}
		\end{align}
		The terms with $(l,m)\in\{(0,0),(2,0)\}$ are the terms that appear on the right side in the lemma. We will now show that the remaining terms are bounded from below by zero. This will imply the lemma.
		
		According to Lemma \ref{pt}, the lowest eigenvalue of the operator $-\partial^2_s -(q-1)u^{q-2}$ in $L^2(\R)$ is $-\frac{q^2}{4}\Lambda$. This, together with the bound $\|u\|^{q-2}_{L^\infty(\R)}\leq \frac{q}{2}\Lambda$, implies that, once $n$ is so large that $C|\mu_n|^{(q-2)\wedge1}\leq 1$,
		\begin{align*}
			& (1-C |\mu_n|^{(q-2)\wedge1}) (-\partial^2_s +l(l+d-2)+\Lambda)-(q-1)u^{q-2} \\
			& \quad \geq (1-C|\mu_n|^{(q-2)\wedge1}) \left( -\partial^2_s +l(l+d-2)+\Lambda -(q-1)u^{q-2} \right) - C|\mu_n|^{(q-2)\wedge1}(q-1)\frac{q}{2}\Lambda \\
			& \quad \geq (1-C|\mu_n|^{(q-2)\wedge1}) \left( - \frac{q^2}{4}\Lambda + l(l+d-2) + \Lambda \right) -C|\mu_n|^{(q-2)\wedge1}(q-1)\frac{q}{2}\Lambda \,.
		\end{align*}
		We have $- \frac{q^2}{4}\Lambda + \Lambda = - \frac{q^2-4}{4} \Lambda_{FS} = - (d-1)$. Since for $l\geq 2$ we have $l(l+d-2)\geq 2d>d-1$, it follows that, if $n$ is large enough, then for all $l\geq 2$ we have
		\begin{equation}\label{lowerbdd}
			(1-C |\mu_n|^{(q-2)\wedge1}) (-\partial^2_s +l(l+d-2)+\Lambda)-(q-1)u^{q-2} \geq 0 \,.
		\end{equation}
		Therefore, we can bound the terms in the sum over $l,m$ in \eqref{sphbound} with $l\geq 2$ by $0$ from below; however, we will keep the summand with $(l,m)=(2,0)$. 
		
		We now bound the term with $l=1$. We set $Z_m:=\langle u^{\sfrac q2},a_{1,m}\rangle_{L^{2}(\R)} \|u\|_{L^{q}(\R)}^{-q}$ and write
		\begin{align*}
			& (1-C|\mu_n|^{(q-2)\wedge1})(\| \partial_s a_{1,m}\|^2_{L^2(\R)}+(d-1+\Lambda)\| a_{1,m}\|^2_{L^2(\R)})-(q-1)\langle a_{1,m}, u^{q-2} a_{1,m}\rangle_{L^2(\R)}\\
			& \quad =	(1-C|\mu_n|^{(q-2)\wedge1}) \Big( \| \partial_s (a_{1,m}-Z_m u^{\sfrac{q}{2}})\|^2_{L^2(\R)}+(d-1+\Lambda)\| a_{1,m}-Z_m u^{\sfrac{q}{2}}\|^2_{L^2(\R)} \\
			&\qquad\qquad\qquad\qquad\qquad\  -(q-1)\langle (a_{1,m}-Z_m u^{\sfrac{q}{2}}), u^{q-2} (a_{1,m}-Z_m u^{\sfrac{q}{2}})\rangle_{L^2(\R)} \Big) \\
			& \qquad - C |\mu_n|^{(q-2)\wedge1}(q-1) \left( \langle (a_{1,m}- Z_m u^{\sfrac{q}{2}}), u^{q-2} (a_{1,m} - Z_m u^{\sfrac{q}{2}}) \rangle_{L^2(\R)}-Z_m^2\|u^{q-1}\|^2_{L^2(\R)} \right).
		\end{align*}
		Here we used the equation \eqref{EL2} for $u^{\sfrac{q}{2}}$, the third condition in \eqref{ortho3} and the assumption $\Lambda=\Lambda_{FS}$. Since $a_{1,m}-Z_m u^{\sfrac{q}{2}}$ is $L^2(\R)$-orthogonal to $u^{\sfrac{q}{2}}$, which is the ground state of the operator $-\partial^2_s -(q-1)u^{q-2}$ in $L^2(\R)$, and since the second eigenvalue of this operator is $-\Lambda$, we can bound
		\begin{align*}
			& \| \partial_s (a_{1,m}-Z_m u^{\sfrac{q}{2}})\|^2_{L^2(\R)}+(d-1+\Lambda)\| a_{1,m}-Z_m u^{\sfrac{q}{2}}\|^2_{L^2(\R)} \\
			& \qquad\qquad -(q-1)\langle (a_{1,m}-Z_m u^{\sfrac{q}{2}}), u^{q-2} (a_{1,m}-Z_m u^{\sfrac{q}{2}})\rangle_{L^2(\R)} \\
			& \quad \geq (d-1) \| a_{1,m}-Z_m u^{\sfrac{q}{2}}\|^2_{L^2(\R)} \,.
		\end{align*}
		This, together with $\|u\|^{q-2}_{L^\infty(\R)}\leq \frac{q}{2}\Lambda$, implies that, when $C|\mu_n|^{(q-2)\wedge1}\leq 1$,
		\begin{align*}
			& (1-C|\mu_n|^{(q-2)\wedge1})(\| \partial_s a_{1,m}\|^2_{L^2(\R)}+(d-1+\Lambda)\| a_{1,m}\|^2_{L^2(\R)})-(q-1)\langle a_{1,m}, u^{q-2} a_{1,m}\rangle_{L^2(\R)}\\
			& \quad \geq \left( (1-C|\mu_n|^{(q-2)\wedge1}) (d-1) - C |\mu_n|^{(q-2)\wedge1}(q-1)\frac{q}{2}\Lambda \right) \| a_{1,m}-Z_m u^{\sfrac{q}{2}}\|^2_{L^2(\R)}\,.
		\end{align*}
		This is non-negative for all sufficiently large $n$. Thus, we conclude that \eqref{pos} holds.
	\end{solution}
	
	Lemma \ref{lem2} motivates to study the minimization problems
	\begin{align*}
		E^{(0)} & \coloneqq\inf \left\{ \mathcal E^{(0)}(g) :\ g\in H^1(\R), \langle u^{q-1}, g\rangle_{L^2(\R)}=\langle u^{q-2}\partial_s u, g\rangle_{L^2(\R)} = 0 \right\}, \\
		E^{(2)} & \coloneqq\inf \left\{ \mathcal E^{(2)}(g) :\ g\in H^1(\R) \right\}.
	\end{align*}
	The energy functionals $\mathcal E^{(l)}$ are of the form quadratic plus linear, and therefore the corresponding minimization problems $E^{(l)}$ are abstractly solvable by a `completion of the square'-argument. Producing concrete numerical values, however, is not straightforward, in particular for $l=2$. These numerical values are necessary in order to verify the secondary non-degeneracy condition. We stress that this difficulty is already present in the model case where $\mu_n$ is replaced by zero. To avoid distraction from the main idea of the proof, we state here the outcome of the `completion of the square'-argument and provide a sketch of the argument but defer the details for $l=2$ to the next section.
	
	\begin{lemma}[Energy of the degree $0$ solution]\label{lem5new}
		As $n\to\infty$, we have
		\[ E^{(0)} = - \frac{\beta^{3q-4}}{\alpha}\frac{|\mathbb S^{d-1}|}{4d^2}\frac{\Gamma \left(\frac{3q-4}{q-2}\right)\sqrt \pi}{\Gamma \left(\frac{3q-4}{q-2}+\frac{1}{2}\right)}\frac{q(q-2)^3}{4(3q-2)}+ \mathcal O_{n\to\infty}(|\mu_n|^{(q-2)\wedge1})\,.\]
	\end{lemma}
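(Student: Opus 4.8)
The statement is established by a `completion of the square' argument, carried out first for the model functional obtained by setting $\mu_n=0$ and then transferred to $\mathcal E^{(0)}$ by a soft perturbation estimate. Concretely, I would work with
\[
\mathcal E_0(g)\coloneqq\int_\R\Big((\partial_s g)^2+\Lambda g^2-(q-1)u^{q-2}g^2-2f^{(0)}g\Big)\,\mathrm{d}s\,,\qquad f^{(0)}=Mu^{2q-3}\,,
\]
minimized over $g\in H^1(\R)$ with $\langle u^{q-1},g\rangle_{L^2(\R)}=\langle u^{q-2}\partial_s u,g\rangle_{L^2(\R)}=0$. The first step is to note that on this constraint space the quadratic part of $\mathcal E_0$ (and of $\mathcal E^{(0)}$, once $C|\mu_n|^{(q-2)\wedge1}<1$) is bounded below by $c\int_\R((\partial_s g)^2+\Lambda g^2)\,\mathrm{d}s$: via \eqref{EL} and \eqref{EL1} the two $L^2$-conditions are exactly $H^1(\R)$-orthogonality to $u$ and $\partial_s u$, so this is the radial ($l=0$) specialization of the spectral-gap estimate already established in the proof of Proposition~\ref{prop2}. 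Hence $\mathcal E^{(0)}$ has a unique minimizer $g_n$; testing $\mathcal E^{(0)}(g_n)\le\mathcal E^{(0)}(g_0)$ against the coercivity bound gives a uniform bound on $\|g_n\|_{H^1(\R)}$, and since $\mathcal E^{(0)}$ differs from $\mathcal E_0$ only by the term $-C|\mu_n|^{(q-2)\wedge1}\int_\R((\partial_s g)^2+\Lambda g^2)\,\mathrm{d}s$ one concludes $E^{(0)}=\mathcal E_0(g_0)+\mathcal O(|\mu_n|^{(q-2)\wedge1})$, where $g_0$ minimizes $\mathcal E_0$. It remains to compute $\mathcal E_0(g_0)$.

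Next I would solve the model problem explicitly. Its Euler--Lagrange equation is $L_0g_0=f^{(0)}+c_1u^{q-1}+c_2u^{q-2}\partial_s u$ with $L_0\coloneqq-\partial_s^2+\Lambda-(q-1)u^{q-2}$ and Lagrange multipliers $c_1,c_2$. Since $\mathcal E_0$ and both constraints respect the even/odd splitting of $H^1(\R)$ (with $f^{(0)},u^{q-1}$ even and $u^{q-2}\partial_s u$ odd) and the odd block carries no linear term, the minimizer is even and $c_2=0$. The engine of the computation is the action of $L_0$ on powers of $u$: differentiating \eqref{EL} and using the first integral $(\partial_s u)^2=\Lambda u^2-\tfrac2q u^q$ yields
\[
L_0u^p=\Lambda(1-p^2)u^p+\frac{p(2p+q-2)-q(q-1)}{q}\,u^{p+q-2}\,,
\]
in particular $L_0u=(2-q)u^{q-1}$ and $L_0u^{q-1}=\Lambda q(2-q)u^{q-1}+\tfrac{2(q-1)(q-2)}{q}u^{2q-3}$. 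Hence the ansatz $g_0=Au^{q-1}+Du$ with $A\coloneqq\frac{Mq}{2(q-1)(q-2)}$ solves the Euler--Lagrange equation for the $c_1$ it produces, and $D$ is pinned down by the surviving constraint $A\int_\R u^{2q-2}\,\mathrm{d}s+D\int_\R u^q\,\mathrm{d}s=0$. That this critical point is the minimizer follows from $\operatorname{Ker}L_0=\operatorname{span}\{\partial_s u\}$ (Lemma~\ref{pt}: $-\Lambda$ is the simple second eigenvalue of $-\partial_s^2-(q-1)u^{q-2}$), which contains no even function, together with $\langle u^{q-1},u\rangle_{L^2(\R)}\neq0$.

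Finally, completing the square gives $\mathcal E_0(g_0)=-\langle f^{(0)},g_0\rangle_{L^2(\R)}=-MA\big(\int_\R u^{3q-4}\,\mathrm{d}s-\big(\int_\R u^{2q-2}\,\mathrm{d}s\big)^2\big/\int_\R u^q\,\mathrm{d}s\big)$. I would then evaluate $\int_\R u^\nu\,\mathrm{d}s=\frac{\beta^\nu}{\alpha}\sqrt\pi\,\Gamma\big(\tfrac{\nu}{q-2}\big)\big/\Gamma\big(\tfrac{\nu}{q-2}+\tfrac12\big)$ via \eqref{ident} for $\nu\in\{q,\,2q-2,\,3q-4\}$ and simplify using $\Gamma(z+1)=z\Gamma(z)$: the parenthesis collapses to $\frac{\beta^{3q-4}}{\alpha}\sqrt\pi\,\frac{\Gamma(\frac{3q-4}{q-2})}{\Gamma(\frac{3q-4}{q-2}+\frac12)}\cdot\frac{(q-2)^2}{2(q-1)(3q-2)}$, and inserting $M=(q-1)(q-2)\frac{\sqrt{|\mathbb S^{d-1}|}}{2d}$, so that $MA=\frac{q(q-1)(q-2)|\mathbb S^{d-1}|}{8d^2}$, reproduces the asserted value. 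I expect the only genuinely delicate points to be making the $\mu_n$-dependence uniform in the first step and the Gamma-function bookkeeping at the end; the rigid fact that $L_0$ maps $\operatorname{span}\{u,u^{q-1}\}$ into $\operatorname{span}\{u^{q-1},u^{2q-3}\}$ is what makes the closed-form solution possible, and its failure for $l=2$ is exactly what necessitates the series analysis of Section~\ref{sec:solving}.
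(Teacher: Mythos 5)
Your proposal is correct and takes essentially the same route as the paper: reduce to a completion-of-the-square, observe that the operator preserves $\operatorname{span}\{u,u^{q-1}\}\to\operatorname{span}\{u^{q-1},u^{2q-3}\}$, take the ansatz $g=Au^{q-1}+Du$ with $D$ fixed by the surviving orthogonality constraint, and evaluate $-\langle f^{(0)},g\rangle$ via the Gamma-function identities. The two cosmetic differences — you set $\mu_n=0$ first and transfer by a soft perturbation bound where the paper tracks the $\mu_n$-dependence through the explicit formulas, and you justify coercivity by specializing the Proposition~\ref{prop2} spectral gap to radial functions where the paper gives a short self-contained argument for the invertibility of $\Pi h_n^{(0)}\Pi$ — do not change the argument, and your closed-form identity $L_0u^p=\Lambda(1-p^2)u^p+\tfrac{p(2p+q-2)-q(q-1)}{q}u^{p+q-2}$ (obtained from the first integral $(\partial_s u)^2=\Lambda u^2-\tfrac{2}{q}u^q$) is a clean way to organize the computation that the paper performs case by case.
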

	
	\begin{lemma}[Energy of the degree $2$ solution]\label{lem8new}
		As $n\to\infty$, we have
		\begin{align*}
			E^{(2)} & = -\frac{\beta^{3q-4}}{\alpha}\frac{|\mathbb S^{d-1}|}{4d^2}\frac{\Gamma \left(\frac{3q-4}{q-2}\right)\sqrt \pi}{\Gamma \left(\frac{3q-4}{q-2}+\frac{1}{2}\right)}\frac{q(q-1)(q-2)(d-1)}{(d+2)P(-1)}
			\sum_{k=0}^\infty \left(P(k-\xi)- P(k)\right) \\
			& \quad + \mathcal O_{n\to \infty}(|\mu_n|^{(q-2)\wedge1})
		\end{align*}
		with a smooth function $P:[-1,\infty)\to (0,\infty)$ defined by
		\begin{equation}
			\label{eq:defp}
			P(x)\coloneqq \frac{\Gamma\left(x+\frac{3}{2}\right)\Gamma\left(x+2\mathfrak b-1\right)\Gamma\left(x+2\mathfrak b\right)}{\Gamma\left(x+\mathfrak b-\mathfrak a+1\right)\Gamma\left(x+\mathfrak b+\mathfrak a  +1\right)\Gamma\left(x+2\mathfrak b+\frac{1}{2}\right)}\,,\qquad x\geq -1\,,
		\end{equation} and 
		\begin{equation}
			\label{xiab}
			\mathfrak a \coloneqq\frac{\sqrt{1+\frac{2d}{\Lambda}}}{q-2} \,,
			\qquad
			\mathfrak b \coloneqq \frac{2q-3}{q-2}\,,\qquad\xi\coloneqq\mathfrak b-\mathfrak a\,.
		\end{equation}
	\end{lemma}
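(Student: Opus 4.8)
\begin{solution}
The plan is to reduce, by routine perturbation theory, to the model problem $\mu_n=0$, and then to solve the corresponding Euler--Lagrange equation explicitly; the latter is the hard part, and it is precisely the material deferred to Section \ref{sec:solving}. For $n$ large the quadratic part of $\mathcal E^{(2)}$ is uniformly positive on $H^1(\R)$: by Lemma \ref{pt} the operator $-\partial_s^2-(q-1)u^{q-2}$ is bounded below by $-\tfrac{q^2}4\Lambda$, and on the FS-curve $-\tfrac{q^2}4\Lambda+2d+\Lambda=2d-(d-1)=d+1>0$; as in the proof of Lemma \ref{lem2} the $\mu_n=0$ part of $\mathcal E^{(2)}$ is then comparable to $\|\cdot\|_{H^1(\R)}^2$, while the $\mu_n$-dependent term is a relative form perturbation of size $\mathcal O(|\mu_n|^{(q-2)\wedge1})$. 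Hence the minimizer and the minimum depend on $\mu_n$ only through such an error, and completing the square gives
\[
E^{(2)}=-\langle f^{(2)},g_*\rangle_{L^2(\R)}+\mathcal O_{n\to\infty}(|\mu_n|^{(q-2)\wedge1})\,,
\]
where $g_*\in H^1(\R)$ is the unique -- hence, by elliptic regularity, smooth -- solution of
\[
\big(-\partial_s^2+2d+\Lambda-(q-1)u^{q-2}\big)g_*=f^{(2)}=M\sqrt{\tfrac{2(d-1)}{d+2}}\,u^{2q-3}\,.
\]

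Because $u^{q-2}=\tfrac q2\Lambda\cosh^{-2}(\alpha\,\cdot\,)$, this is a P\"oschl--Teller equation with source $\propto\cosh^{-2\mathfrak b}(\alpha\,\cdot\,)$; under $w=\cosh^{-2}(\alpha s)$ it becomes hypergeometric, with regular singular points at $w=0,1,\infty$. I would look for a particular solution as a power series in $u^{q-2}$,
\[
g_{\mathrm{part}}=\sum_{k\ge0}c_k\,u^{\,2q-3+k(q-2)}=\sum_{k\ge0}\hat c_k\cosh^{-2\mathfrak b-2k}(\alpha\,\cdot\,)\,,
\]
the exponents being forced by the source. Using $-\partial_s^2\cosh^{-2m}(\alpha s)=\alpha^2\big(-4m^2\cosh^{-2m}(\alpha s)+2m(2m+1)\cosh^{-2m-2}(\alpha s)\big)$ one obtains the two-term recursion
\[
c_k=c_{k-1}\,\frac{(k+\tfrac12)(k+2\mathfrak b-2)}{(k+\xi)(k+\mathfrak b+\mathfrak a)}\,,\qquad k\ge1\,,
\]
in which the factorization of the numerator rests on the identity $\tfrac{q(q-1)}{2(q-2)^2}=(\mathfrak b-1)(\mathfrak b-\tfrac32)$ and that of the denominator on $2d+\Lambda=(q-2)^2\Lambda\,\mathfrak a^2$. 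Thus each $c_k$ is an explicit ratio of Gamma functions with $c_k=O(k^{-3/2})$, so $g_{\mathrm{part}}$ converges and decays at $\pm\infty$; but the $O(k^{-3/2})$ decay of its coefficients means the sum has $(1-w)^{1/2}$-type behavior at $w=1$, that is, a Lipschitz kink at $s=0$, so $g_{\mathrm{part}}$ is not yet $g_*$. The genuine solution is
\[
g_*=g_{\mathrm{part}}+C_{\mathrm{hom}}\,g_{\mathrm{hom}}\,,\qquad g_{\mathrm{hom}}=\sum_{k\ge0}b_k\cosh^{-2\mathfrak a-2k}(\alpha\,\cdot\,)\,,
\]
with $g_{\mathrm{hom}}$ the analogous series solution of the homogeneous equation and $C_{\mathrm{hom}}$ chosen so that the two kinks at $s=0$ cancel; the constant $C_{\mathrm{hom}}$ is read off from the leading $O(k^{-3/2})$ asymptotics of $c_k$ and $b_k$ by Darboux's method (equivalently, from the standard hypergeometric connection formula between $w=0$ and $w=1$). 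Carrying this out, together with the value of $c_0$ from the indicial equation at $w=0$, is the content of Section \ref{sec:solving}.

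It then remains to evaluate $\langle f^{(2)},g_*\rangle=\langle f^{(2)},g_{\mathrm{part}}\rangle+C_{\mathrm{hom}}\langle f^{(2)},g_{\mathrm{hom}}\rangle$ term by term, using $\int_\R\cosh^{-\nu}(s)\,\mathrm{d}s=\sqrt\pi\,\Gamma(\tfrac\nu2)/\Gamma(\tfrac{\nu+1}2)$ from \eqref{ident}. Since $f^{(2)}\propto\cosh^{-2\mathfrak b}$, the first inner product yields a constant times $\sum_{k\ge0}P(k)$; since $\mathfrak a=\mathfrak b-\xi$, the terms of $g_{\mathrm{hom}}$ carry exponents $\cosh^{-2(\mathfrak b-\xi)-2k}$ and the second yields the same constant times $\sum_{k\ge0}P(k-\xi)$. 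Substituting the explicit $C_{\mathrm{hom}}$ and $c_0$ collapses the prefactors into the one displayed in the statement (the $P(-1)$ originating from $c_0$ and the normalization of $g_{\mathrm{hom}}$). Absolute convergence of $\sum_{k}(P(k-\xi)-P(k))$ follows from Stirling, which gives $P(x)=\mathrm{const}\cdot x^{-3}(1+o(1))$ as $x\to\infty$, hence $P(k-\xi)-P(k)=O(k^{-4})$; and $P>0$ on $[-1,\infty)$ together with $-\xi\in(-1,0)$ -- needed so that all the $P(k-\xi)$ lie in the domain -- follow from $\mathfrak b=2+\tfrac1{q-2}>2$ and the Felli--Schneider relation, which gives $0<\xi<1$ exactly because $2<q<2^*$. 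I expect the main obstacle to be the explicit resolution of the ODE -- in particular the connection constant $C_{\mathrm{hom}}$ and the recognition of the two resulting series as $\sum_kP(k)$ and $\sum_kP(k-\xi)$; the companion point, positivity of $\sum_k(P(k-\xi)-P(k))$, enters only when $(A)$ and $(B)$ are combined in Proposition \ref{prop3}.
\end{solution}
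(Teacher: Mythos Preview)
Your approach is essentially the paper's: build the $H^2(\R)$-solution as a particular series $\sum_k \hat c_k\cosh^{-2(\mathfrak b+k)}(\alpha\,\cdot\,)$ plus a multiple of the homogeneous series $\sum_k b_k\cosh^{-2(\mathfrak a+k)}(\alpha\,\cdot\,)$, fix that multiple by the Neumann condition $g'(0^+)=0$ (your ``kink cancellation''; the paper reads the slope off via an Abelian theorem rather than Darboux/connection formulae), and integrate termwise against $f^{(2)}$ using \eqref{ident} to produce $\sum_k P(k)$ and $\sum_k P(k-\xi)$. The only implementation difference is that the paper keeps the $\mu_n$-perturbation in $h_n^{(2)}$ throughout and extracts the $\mathcal O(|\mu_n|^{(q-2)\wedge1})$ error at the end via multiplicative factors $P_n,Q_n=1+\mathcal O(|\mu_n|^{(q-2)\wedge1})$, whereas you pass to $\mu_n=0$ first; one harmless slip: Stirling gives $P(x)\sim c\,x^{-2}$ (not $x^{-3}$), hence $P(k-\xi)-P(k)=O(k^{-3})$, which still yields absolute convergence since $\sum_k P(k)$ already converges on its own.
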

	For the proof of these two lemmas, we consider for $l\in\{0,2\}$ the one-dimensional Schr\"odinger operators 
	\begin{equation}
		\label{eq:defhnl}
		h_n^{(l)}\coloneqq(1-C|\mu_n|^{(q-2)\wedge1})(-\partial_s^2+2d\mathbbm1_{\{l=2\}}(l)+\Lambda)-(q-1)u^{q-2} \,.
	\end{equation}
	They can be considered as self-adjoint, lower semibounded operators in the Hilbert space $L^2(\R)$ with form domain $H^1(\R)$ and operator domain $H^2(\R)$.
	
	\begin{proof}[Proof of Lemma \ref{lem8new}]
		As we have seen in \eqref{lowerbdd} in the proof of Lemma \ref{lem2}, the operator $h_n^{(2)}$ is bounded from below by $d+1+ \mathcal O(|\mu_n|^{(q-2)\wedge 1})$. Thus, for all sufficiently large $n$ the operator is boundedly invertible. We can write for any $g\in H^1(\R)$,
		\begin{align*}
			\mathcal E^{(2)}(g) & = \| (h_n^{(2)})^{1/2} g - (h_n^{(2)})^{-1/2} f^{(2)} \|_{L^2(\R)}^2 - \langle f^{(2)}, (h_n^{(2)})^{-1} f^{(2)} \rangle_{L^2(\R)}\,.
		\end{align*}
		Thus, $E^{(2)} = \inf_g \mathcal E^{(2)}(g) = - \langle f^{(2)}, (h_n^{(2)})^{-1} f^{(2)} \rangle_{L^2(\R)}$, and the infimum is attained at the unique $g$ that satisfies $(h_n^{(2)})^{1/2} g = (h_n^{(2)})^{-1/2} f^{(2)}$. The latter is equivalent to having $g\in H^2(\R)$ and $h_n^{(2)} g = f^{(2)}$. We have not been able to find a simple explicit expression of this solution $g$ (not even for $\mu_n=0$), but we have been able to find a power series representation of it, which allows us to deduce the formula stated in the lemma. We defer the details of this rather lengthy analysis to the next section.
	\end{proof}
	
	The idea of the proof of Lemma \ref{lem5new} is similar to that of Lemma \ref{lem8new}, and in this case, in fact, an explicit expression for the solution is available. There is a different complication, however, namely, in the first part of the proof. This comes from the fact that the operator $h_n^{(0)}$ is not positive definite. Indeed, from Lemma \ref{pt} we know that at $\mu_n=0$ its lowest two eigenvalues are $-(\frac{q^2}4-1)\Lambda$ and $0$. We need to remove these two unstable directions in order to obtain a boundedly invertible operator. As we will show, this is achieved by the orthogonality conditions in the definition of $E^{(0)}$. This is not completely obvious since the functions in these orthogonality conditions are not eigenfunctions of the operator.
	
	\begin{proof}[Proof of Lemma \ref{lem5new}]
		\emph{Step 1.} We denote by $\Pi$ the $L^2(\R)$-orthogonal projection onto the $L^2(\R)$-orthogonal complement of $\operatorname{span}\{u^{q-1},u^{q-2}\partial_s u\}$. We claim that for all sufficiently large $n$ the operator $\Pi h_n^{(0)}\Pi$, considered in the Hilbert space $\ran\Pi$, is bounded from below by a positive constant and, consequently, boundedly invertible on that space. Since the bottom of the spectrum of $\Pi h_n^{(0)}\Pi$ depends continuously on $\mu_n$, it suffices to prove this assertion for the operator $h^{(0)}$, defined similarly as $h_n^{(0)}$ but with $\mu_n$ set to zero. 
		
		By the computations in Subsection \ref{subsec3.1}, we know that the Hessian of $\mathcal F$ at $u$, restricted to radial functions, is given by the operator $h^{(0)} + (q-2)\|u\|_q^{-q} |u^{q-1}\rangle\langle u^{q-1}|$ in $L^2(\R)$. Since the Hessian is positive semidefinite and since
		$$
		\Pi h^{(0)} \Pi = \Pi \left( h^{(0)} + (q-2)\|u\|_q^{-q} |u^{q-1}\rangle\langle u^{q-1}| \right) \Pi \,,
		$$
		we deduce that $\Pi h^{(0)}\Pi$ is positive semidefinite. Since the essential spectrum of this operator starts at $\Lambda>0$ (as before by Weyl's theorem; see, e.g., \cite[Theorem 1.14]{FrLaWe}), it suffices to prove that $0$ is not an eigenvalue of $\Pi h^{(0)}\Pi$. Thus, assume that $\Pi h^{(0)}\Pi\phi =0$ for some $\phi\in H^2(\R)$. Then the above computation shows that $\Pi\phi$ lies in the kernel of the Hessian of $\mathcal F$ at $u$, restricted to radial functions. Hence, by \eqref{KerHess}, $\Pi\phi= c_1 u + c_2 \partial_s u$ for some constants $c_1,c_2\in\R$. Since even and odd functions are mutually orthogonal, we deduce that
		$$
		0 = \langle u^{q-1},\Pi\phi\rangle_{L^2(\R)} = c_1 \int_\R u^{q} \ \mathrm{d}s
		\qquad\text{and}\qquad
		0 = \langle u^{q-2},\partial_s \Pi\phi\rangle_{L^2(\R)} = c_2 \int_\R u^{q-2} \partial^2_s u \ \mathrm{d}s \,.
		$$
		Thus, $c_1=c_2=0$ and $\Pi\phi=0$. This means that $\Pi h^{(0)}\Pi$ has trivial kernel on the space $\ran\Pi$, as claimed.
		
		\emph{Step 2.} Now we can proceed similarly as in the proof of Lemma \ref{lem8new}. We can write for any $g\in H^1(\R)\cap\ran\Pi$,
		$$
		\mathcal E^{(0)}(g) = \| (\Pi h_n^{(0)}\Pi)^{1/2} g - (\Pi h_n^{(0)}\Pi)^{-1/2} \Pi f^{(0)} \|_{\ran\Pi}^2 - \langle  \Pi f^{(0)}, (\Pi h_n^{(0)}\Pi)^{-1}  \Pi f^{(0)} \rangle_{\ran\Pi} \,,
		$$
		where we consider the natural inner product and norm that $\ran\Pi$ inherits from $L^2(\R)$. We conclude that $E^{(0)} = \inf_{g\in H^1(\R)\cap\ran\Pi} \mathcal E^{(0)}(g) = - \langle  \Pi f^{(0)}, (\Pi h_n^{(0)}\Pi)^{-1} \Pi  f^{(0)} \rangle_{\ran\Pi}$, and the infimum is attained at the unique $g$ that satisfies $(\Pi h_n^{(0)}\Pi)^{1/2} g = (\Pi h_n^{(0)}\Pi)^{-1/2}  \Pi f^{(0)}$. The latter is equivalent to having $g\in H^2(\R)\cap\ran\Pi$ and $\Pi h_n^{(0)}\Pi g =  \Pi f^{(0)}$. This means that
		\begin{equation}
			\label{eq:energy0proof}
			h_n^{(0)}\Pi g = f^{(0)} + c_3 u^{q-1} + c_4 u^{q-2}\partial_s u
		\end{equation}
		for some constants $c_3,c_4\in\R$.
		
		Exploiting \eqref{EL}, we compute directly
		\begin{align*}
			h_n^{(0)}u&=-(q-2+C|\mu_n|^{(q-2)\wedge 1})u^{q-1}\,,\\
			h_n^{(0)}u^{q-1}&=-(1-C|\mu_n|^{(q-2)\wedge 1})q(q-2)\Lambda u^{q-1}+\frac{(q-1)}{q}(2(q-2)-(3q-4)C|\mu_n|^{(q-2)\wedge 1})u^{2q-3}\,,
		\end{align*}
		and therefore, if we set $$g=K u^{q-1}+Lu$$ with constants $K,L\in\R$ to be determined, then
		\begin{align*}
			h_n^{(0)}g & = \left( -(1-C|\mu_n|^{(q-2)\wedge 1})q(q-2)\Lambda K - (q-2+C|\mu_n|^{(q-2)\wedge 1}) L \right) u^{q-1} \\
			& \quad + \frac{(q-1)}{q}(2(q-2)-(3q-4)C|\mu_n|^{(q-2)\wedge 1}) K u^{2q-3}\,.
		\end{align*}
		If we choose
		$$
		K\coloneqq \frac{ M q}{(2(q-2)-(3q-4)C|\mu_n|^{(q-2)\wedge1})(q-1)}\,,
		$$
		then $\frac{(q-1)}{q}(2(q-2)-(3q-4)C|\mu_n|^{(q-2)\wedge 1}) K u^{2q-3} = f^{(0)}$. Further, if we choose
		\begin{align*}
			L\coloneqq & - \frac{2q}{3q-2} \beta^{q-2} K\,,
		\end{align*}
		then, using the second identity in \eqref{ident} and the functional equation of the gamma function,
		$$
		\langle u^{q-1}, g \rangle_{L^2(\R)} = 0
		\qquad\text{and}\qquad
		\langle u^{q-2}\partial_s u, g \rangle_{L^2(\R)} = 0 \,.
		$$
		Here the second identity follows immediately by parity. Thus, we have found a solution $g\in H^2(\R)\cap\ran\Pi$ of \eqref{eq:energy0proof} for certain explicit but irrelevant constants $c_3$ and $c_4$. (Indeed, $c_4=0$.)
		
		It follows that
		\begin{align*}
			E^{(0)} & = - \langle  \Pi f^{(0)}, (\Pi h_n^{(0)}\Pi)^{-1}  \Pi f^{(0)} \rangle_{\ran\Pi} = - \langle f^{(0)}, g \rangle_{L^2(\R)} = - M \int_\R u^{2q-3} \left( K u^{q-1}+Lu\right) \mathrm{d}s \\
			& = - M K \frac{ \beta^{3q-4}} {\alpha} \left(  \frac{\Gamma\left(\frac{3q-4}{q-2}\right)\,\sqrt\pi}{\Gamma\left(\frac{3q-4}{q-2}+\frac12\right)} - \frac{2q}{3q-2} \frac{\Gamma\left(\frac{2q-2}{q-2}\right)\,\sqrt\pi}{\Gamma\left(\frac{2q-2}{q-2}+\frac12 \right)} \right) \\
			& = -M K\frac{ \beta^{3q-4}} {\alpha} \frac{\Gamma\left(\frac{3q-4}{q-2}\right)\,\sqrt\pi}{\Gamma\left(\frac{3q-4}{q-2}+\frac12\right)} \frac{(q-2)^2}{2(q-1)(3q-2)} \,.
		\end{align*}
		Here we used the second identity from \eqref{ident} and the functional equation of the gamma function. If we insert the asymptotics $K=\frac{  M q}{2(q-2)(q-1)}+\mathcal{O}_{n\to\infty}(|\mu_n|^{(q-2)\wedge1})$ and recall the definition of $M$, we arrive at the assertion of the lemma.
	\end{proof}
	
	
	\subsection{Proof of Proposition \ref{prop3}}
	
	Without changing the notation, we restrict ourselves to a subsequence along which the liminf on the left side of \eqref{finalest} is realized. We first consider the simple case where (along that chosen subsequence) $\liminf_{n\to\infty} \mathcal F(u_n)/\dist(u_n,\mathcal M)^2>0$. Then, since by assumption $\mathcal F(u_n)\to 0$, we have $\dist(u_n,\mathcal M)\to 0$. Thus, $\|u_n\|^2/\dist(u_n,\mathcal M)^2\to\infty$, and therefore the left side of \eqref{finalest} is equal to $\infty$.
	
	We now consider the second case where $\liminf_{n\to\infty} \mathcal F(u_n)/\dist(u_n,\mathcal M)^2=0$. Then, after passing to a subsequence, \eqref{van} is satisfied, and we may use the expansion from Lemma~\ref{lem1}, which involves the two terms $(A)$ and $(B)$. The term $(B)$ was computed in \eqref{Bcomp}. Combining this with the bounds on $(A)$ from Lemma \ref{lem2}, \ref{lem5new}, and \ref{lem8new}, we obtain
	\begin{align*}
		\mu_n^{-4} \lambda_n^{-2} \mathcal F(u_n) \geq 
		\frac{\beta^{3q-4}}{\alpha}\frac{|\mathbb S^{d-1}|}{d^2}\frac{\Gamma \left(\frac{3q-4}{q-2}\right)\sqrt \pi}{\Gamma \left(\frac{3q-4}{q-2}+\frac{1}{2}\right)}  \frac{q(5q-6)(q-1)}{2(3q-2)} J(q,d) + \mathcal O_{n\to\infty}(|\mu_n|^{(q-2)\wedge 1}) \,,
	\end{align*}
	where, abbreviating $\xi=\mathfrak b-\mathfrak a$ as in Lemma \ref{lem8new}, $J(q,d)$ is defined by
	\begin{equation}\label{bestconst}
		J(q,d)\coloneqq \frac{(q-2)(3q-2)}{2(5q-6)} \left(\frac{3q-4}{4(q-1)}- \frac{(q-3)d}{q(d+2)}-
		\frac{d-1}{d+2}\sum_{k=0}^\infty \frac{P(k-\xi)- P(k)}{P(-1)}\right).
	\end{equation}
	Meanwhile, as a consequence of the results from Proposition~\ref{prop1} and \ref{prop2}, we find that
	\begin{align*}
		\mu_n^{-4}\lambda_n^{-2}&=\frac{(\|\omega_d u^{\sfrac{q}{2}}\|^2+\|R_n\|^2)^2}{\lambda^2_n \|r_n\|^4}=\frac{\|\omega_d u^{\sfrac{q}{2}}\|^4}{\|u\|^2_q\|u\|_q^{q-2}}\frac{C_{a,b}}{\operatorname{dist}(u_n,\mathcal M)^4}(1+o_{n\to\infty}(1))
		\\&=\frac{\beta^{3q-4}}{\alpha}\frac{|\mathbb S^{d-1}|}{d^2}\frac{\Gamma\left(\frac{2q-2}{q-2}\right)\sqrt{\pi}}{\Gamma\left(\frac{2q-2}{q-2}+\frac{1}{2}\right)}\frac{2q(q-1)^2}{(3q-2)}\frac{\|u_n\|^2}{\operatorname{dist}(u_n,\mathcal M)^4}(1+o_{n\to\infty}(1))\,.
	\end{align*}
	If we combine the previous two expansions, we arrive at the lower bound in \eqref{finalest}.
	
	Let us show that this bound is best possible in the sense that there is a sequence $(u_n)_n$ with the same properties as before, for which \eqref{finalest} is an equality. Indeed, it suffices to take
	$$
	u_n = \|u\|_q^{-1} (u + \mu_n(u^{\sfrac q2}\omega_d + \mu_n(g^{(0)} Y_{0,0} + g^{(2)} Y_{2,0})))
	$$
	for an arbitary sequence $(\mu_n)_n$ tending to $0$. Here $g^{(l)}$ are the functions in $H^1(\R)$ for which the infima in the definition of $E^{(l)}$ are attained; see the proofs of Lemma \ref{lem5new} and \ref{lem8new}. With this choice of $(u_n)_n$, the bounds in Lemma \ref{lem2}, \ref{lem5new}, and \ref{lem8new} become saturated, and therefore for this sequence we have equality in \eqref{finalest}, as claimed.
	
	To complete the proof of the proposition, it remains to prove that $J(q,d)>0$. This is equivalent to proving that $\tilde J(q,d)>0$, where
	\begin{align}
		\tilde J(q,d) \coloneqq
		\frac{(3q-4)(d+2)}{4(q-1)(d-1)}- \frac{(q-3)d}{q(d-1)}-
		\frac{1}{P(-1)}\sum_{k=0}^\infty (P(k-\xi)- P(k))\,. \label{sumest}
	\end{align}
	We distinguish two cases.
	
	\emph{Case 1.} To bound $\tilde J(q,d)$, let us first assume that $d>2, 2<q<2^*$ or $d=2$, $2.8<q <2^*$. Then Lemma \ref{lem9} below is applicable, and we infer that $P$ is strictly convex on the interval $[-1,\infty)$. We deduce that
	\begin{align}\notag
		\sum_{k=0}^\infty \frac{\left(P(k-\xi)- P(k)\right)}{-\xi}&>\sum_{k=0}^\infty \frac{\left(P(k-1)- P(k)\right)}{-1} = -P(-1)\\&>\frac{P(-1)}{-\xi} \left(\frac{(3q-4)(d+2)}{4(q-1)(d-1)}- \frac{(q-3)d}{q(d-1)}\right),\label{convexarg}
	\end{align}
	where we used
	\begin{align}
		\xi-\left(\frac{(3q-4)(d+2)}{4(q-1)(d-1)}- \frac{(q-3)d}{q(d-1)}\right)&<1-\left(\frac{(3q-4)(d+2)}{4(q-1)(d-1)}- \frac{(q-3)d}{q(d-1)}\right)\notag\\&
		=\frac{5(d-2)q^2-4(4d-3)q+12d}{4q(q-1)(d-1)}<0\,.\label{small0}
	\end{align}
	Note that
	\begin{equation}\label{xi}
		\xi \in \left(\frac{d-2}{d-1},1\right)
		,
	\end{equation}
	which is a consequence of the strict monotonicity of $\xi$ with respect to $q$ and the values of $\xi$ at the boundary points $q=2$ and $q=2^*$. The last step in \eqref{small0} follows by computing the roots of the numerator, which is quadratic in $q$ for $d>2$. The roots are $\frac{6}{5}$ and $\frac{2d}{d-2}$, and thus for all admissible $q$ the quadratic term lies below $0$. In case $d=2$, the numerator reduces to $-20q+24$, which is smaller than $0$.
	Multiplying \eqref{convexarg} by $-\xi P(-1)^{-1}<0$, we showed that $\tilde J(q,d)>0$ for the assumed parameter range of $(q,d)$. 
	
	\emph{Case 2.} We now consider the remaining case $d=2$, $2<q\leq2.8$ and give a lower bound on $E^{(2)}$ that is not optimal but more explicit than the optimal bound from Lemma \ref{lem8new}. In fact, we give this bound in general dimension $d$ and only specify to $d=2$ at the end. The argument is based on a rough version of the `completion of the square'-argument that we have used above. We recall from the proof of Lemma \ref{lem2} or Lemma \ref{lem8new} that the operator $h^{(2)}_n$ is bounded from below by $d+1+ \mathcal O(|\mu_n|^{(q-2)\wedge 1})$. Thus, for all sufficiently large $n$ the operator is invertible, and the operator-$L^2(\R)$-norm of $(h_n^{(2)})^{-1}$ is bounded by $(d+1)^{-1}+\mathcal O(|\mu_n|^{(q-2)\wedge 1})$. Moreover, we have seen that $E^{(2)}= - \langle f^{(2)}, (h_n^{(2)})^{-1} f^{(2)} \rangle_{L^2(\R)}$. Hence, we can bound, using the second identity in \eqref{ident},
	\begin{align*}
		E^{(2)} \geq&  - \|(h_n^{(2)})^{-1}\|_{op}\|f^{(2)}\|^2_{L^2(\R)}
		\\\geq &- \frac{\beta^{3q-4}}{\alpha}\frac{|\mathbb S^{d-1}|}{4d^2}\frac{\Gamma \left(\frac{3q-4}{q-2}\right)\sqrt \pi}{\Gamma \left(\frac{3q-4}{q-2}+\frac{1}{2}\right)}q(q-1)(q-2)\frac{(d-1)}{(d+2)}\left(\frac{8(q-1)(3q-4)(d-1)}{(q+2)(7q-10)(d+1)}\right)\\&+\mathcal O(|\mu_n|^{(q-2)\wedge 1})\,.
	\end{align*}
	Now assume $d=2$, $2<q\leq 2.8$. Then this lower bound directly implies $\tilde J(q,2)>0$ in \eqref{sumest} as
	\begin{equation}\label{larger0}
		\frac{3q-4}{q-1}-2\frac{q-3}{q}-\frac{8}{3}\frac{(q-1)}{(q+2)}\frac{(3q-4)}{(7q-10)}\geq 2+\frac{1}{7}-\frac{8}{3}\cdot\frac{3}{8}\cdot\frac{1}{2}>0\,,
	\end{equation}
	where we estimated every fraction using its monotonicity in $q$ by its value at $q=2$ or $q=2.8$. (We note in passing that the above argument works as long as the left side of \eqref{larger0} is positive, that is, $q$ is smaller than approximately $57.325$.) This completes our discussion of Case 2 and therefore the proof of the proposition.
	\qed
	
	
	\section{Solving a certain inhomogeneous second order equation}\label{sec:solving}
	
	To complete the proof of our main theorem, we still need to prove Lemma \ref{lem8new}, as well as show the strict convexity of $P$ that we used in the proof of Proposition \ref{prop3}. This will be accomplished in the present section.
	
	\subsection{Proof of Lemma \ref{lem8new}}
	
	In the previous section, we have reduced the proof of Lemma \ref{lem8new} to solving the equation
	$$
	h_n^{(2)} g = f^{(2)}
	$$
	for $g\in H^2(\R)$ and computing
	$$
	E^{(2)} = - \langle f^{(2)}, (h_n^{(2)})^{-1} f^{(2)} \rangle_{L^2(\R)} = - \langle f^{(2)}, g \rangle_{L^2(\R)} \,.
	$$
	The operator $h_n^{(2)}$ is defined in \eqref{eq:defhnl}. The solution $g$ depends on $n$ (as well as $q$ and $d$). We stress that we already know the existence and uniqueness of this $g$ thanks to the invertibility of $h_n^{(2)}$. Since $f^{(2)}$ is even, so is $g$. This motivates us to study the equation on the positive halfline.
	
	\begin{lemma}[The degree $2$ solution]\label{lem6}
		There are $\eta\in\R$ and $(A_k)_k,(B_k)_k\subset\R$, depending on $\mu_n$, $q$, and $d$, such that the affine space of $H^2((0,\infty))$-solutions $g$ of
		\begin{equation}
			\label{ode1}
			(1-C|\mu_n|^{(q-2)\wedge1})(-\partial_s^2g +(2d+\Lambda)g)-(q-1)u^{q-2}g = f^{(2)} 
			\qquad\text{in}\ (0,\infty) 
		\end{equation}
		is parametrized by
		\begin{equation}\label{gn2}
			g =\tau u^{\sqrt{1+\frac{2d}{\Lambda}}} \sum_{k=0}^\infty A_k \cosh^{-2k}(\alpha\,\  \cdot\ )-\eta  f^{(2)}\sum_{k=0}^\infty B_k \cosh^{-2k}(\alpha\, \ \cdot\ )
		\end{equation}
		with an arbitrary parameter $\tau\in\R$. Moreover, we have
		\begin{align*}
			\lim_{s\to 0^+} \partial_s g(s) & = -2\alpha \tau \beta^{\sqrt{1+\frac{2d}{\Lambda}}} \sqrt\pi \frac{\Gamma(2\mathfrak a+1)}{\Gamma(\mathfrak a+\mathfrak b-1)} \frac{\Gamma(1)}{\Gamma\left(\frac32-\mathfrak b + \mathfrak a\right)} P_n \\
			& \quad + 2\alpha\eta M \beta^{2q-3} \sqrt{\frac{2(d-1)}{d+2}} \sqrt\pi \frac{\Gamma(\mathfrak a+\mathfrak b+1)}{\Gamma(2\mathfrak b-1)} \frac{\Gamma(1+\mathfrak b - \mathfrak a)}{\Gamma\left(\frac32\right)} Q_n \,,
		\end{align*}
		where $\mathfrak a$ and $\mathfrak b$ are defined in \eqref{xiab}, and where $P_n$ and $Q_n$ are certain numbers satisfying
		$$
		P_n = 1+ \mathcal O_{n\to \infty}(|\mu_n|^{(q-2)\wedge1}) \,,
		\qquad
		Q_n = 1+ \mathcal O_{n\to \infty}(|\mu_n|^{(q-2)\wedge1}) \,.
		$$
	\end{lemma}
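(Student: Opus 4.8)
The plan is to convert \eqref{ode1} into a hypergeometric equation via the substitution $v=\cosh^{-2}(\alpha s)$, which maps $(0,\infty)$ bijectively onto $(0,1)$ and satisfies $u^{q-2}=\tfrac{q}{2}\Lambda\,v$, $(\partial_s v)^2=4\alpha^2v^2(1-v)$, and $\partial_s^2v=2\alpha^2v(2-3v)$. Writing $g(s)=G(v)$ turns \eqref{ode1} into a Fuchsian second order ODE for $G$ with regular singular points at $v=0$ (i.e.\ $s=+\infty$) and $v=1$; the point $v=1$, i.e.\ $s=0$, is an \emph{ordinary} point of the original equation in $s$, since $u^{q-2}$ and $f^{(2)}$ are smooth there. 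The indicial equation at $v=0$ is $4\alpha^2\gamma^2=2d+\Lambda$, with roots $\gamma=\pm\mathfrak a$ (here $\alpha=\tfrac{q-2}{2}\sqrt\Lambda$, $\mathfrak a=\tfrac1{q-2}\sqrt{1+2d/\Lambda}$, so $u^{\sqrt{1+2d/\Lambda}}=\beta^{\sqrt{1+2d/\Lambda}}\cosh^{-2\mathfrak a}$). Since $\mathfrak a>0$ and $v^{\pm\mathfrak a}\sim\mathrm{const}\cdot e^{\mp\sqrt{2d+\Lambda}\,s}$ as $s\to\infty$, only the Frobenius solution with leading exponent $+\mathfrak a$ lies in $H^2$ near $+\infty$. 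The source $f^{(2)}\propto u^{2q-3}\propto v^{\mathfrak b}$ is non-resonant — the recursion denominators $(\gamma+k)^2-\mathfrak a^2$ do not vanish for $\gamma\in\{\mathfrak a,\mathfrak b\}$ and $k\ge1$, using $\mathfrak a>0$ and $\mathfrak b-\mathfrak a=\xi\in(\tfrac{d-2}{d-1},1)$ from \eqref{xi} — so there is a particular solution with leading exponent $\mathfrak b$, also in $H^2$ near $+\infty$ since $2\alpha\mathfrak b=(2q-3)\sqrt\Lambda>0$. Because the equation in $s$ is regular at $s=0$, $H^2((0,\infty))$ imposes no further condition there; hence the $H^2((0,\infty))$-solution set is exactly this particular solution plus an arbitrary multiple $\tau$ of the $v^{\mathfrak a}$-Frobenius solution, which is the form \eqref{gn2}.

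Next I would solve the recursions. Written in $v$, every term of \eqref{ode1} contributes only the Frobenius-index shifts $0$ and $+1$ (the potential $\propto v$, the term $\propto v$ in $\partial_s^2 v$, and the $v^3G''$ term all shift by one), so the recursion is two-term, $c_k=\rho_k\,c_{k-1}$ with $\rho_k$ rational in $\gamma+k$. A short computation shows that at $\mu_n=0$, i.e.\ with $\kappa\coloneqq1-C|\mu_n|^{(q-2)\wedge1}=1$, the numerator of $\rho_k$ factors (up to a constant) as $(\gamma+k-\mathfrak b+\tfrac12)(\gamma+k+\mathfrak b-2)$ and the denominator as $(\gamma+k-\mathfrak a)(\gamma+k+\mathfrak a)$; this is the algebraic identity underlying the definition \eqref{eq:defp} of $P$. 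Taking $\gamma=\mathfrak a$, $A_0=1$ yields $A_k=\frac{(\mathfrak a-\mathfrak b+3/2)_k\,(\mathfrak a+\mathfrak b-1)_k}{(1)_k\,(1+2\mathfrak a)_k}$, and $\gamma=\mathfrak b$, $B_0=1$ yields $B_k=\frac{(3/2)_k\,(2\mathfrak b-1)_k}{(1+\mathfrak b-\mathfrak a)_k\,(1+\mathfrak b+\mathfrak a)_k}$; for $\mu_n\ne0$ these carry factors $1+\mathcal O_{n\to\infty}(|\mu_n|^{(q-2)\wedge1})$ that are uniform in $k$, since $\rho_k$ depends rationally on $\kappa$ with $\kappa\to1$ and the $1/k$-coefficient of $\rho_k$ is $\kappa$-independent. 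From $\rho_k=1-\frac{3}{2k}+\mathcal O(k^{-2})$ one gets $A_k,B_k=\mathcal O(k^{-3/2})$, so both power series converge absolutely on $[0,1]$, including at $v=1$. Thus the representation \eqref{gn2} is valid on all of $[0,\infty)$, $g$ extends continuously to $s=0$, and $\eta$ is the explicit positive constant fixed by matching the leading $v^{\mathfrak b}$-coefficients.

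The last and most delicate step is the evaluation of $\lim_{s\to0^+}\partial_s g(s)$. For a term $\cosh^{-2\gamma}(\alpha s)\sum_k c_k\cosh^{-2k}(\alpha s)$ one has $\partial_s(\cdot)=-2\alpha\tanh(\alpha s)\cosh^{-2\gamma}(\alpha s)\sum_k(\gamma+k)c_k\cosh^{-2k}(\alpha s)$; substituting $v=\cosh^{-2}(\alpha s)\to1^-$, for which $1-v=\tanh^2(\alpha s)$, the series $\sum_k(\gamma+k)c_kv^k$ diverges, but only like $\sqrt\pi\,c_\infty(1-v)^{-1/2}$ with $c_\infty\coloneqq\lim_kk^{3/2}c_k$, so the prefactor $\tanh(\alpha s)=(1-v)^{1/2}$ regularizes it to the finite limit $-2\alpha\sqrt\pi\,c_\infty$. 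I would prove the asymptotics $\sum_k(\gamma+k)c_kv^k\sim\sqrt\pi\,c_\infty(1-v)^{-1/2}$ either by a Tauberian argument based on $c_k=c_\infty k^{-3/2}(1+\mathcal O(1/k))$ and the classical relation $\sum_kk^{-1/2}v^k\sim\Gamma(\tfrac12)(1-v)^{-1/2}$, or — for the $A$-series, an honest ${}_2F_1$ with parameter excess $\tfrac12$ — via the connection formula at $v=1$, keeping the $(1-v)^{1/2}$-branch with coefficient $\Gamma(c)\Gamma(a+b-c)/(\Gamma(a)\Gamma(b))$ and differentiating. Stirling applied to the Pochhammer formulas then gives $\lim_kk^{3/2}A_k=\frac{\Gamma(1+2\mathfrak a)}{\Gamma(\mathfrak a-\mathfrak b+3/2)\Gamma(\mathfrak a+\mathfrak b-1)}$ and $\lim_kk^{3/2}B_k=\frac{\Gamma(1+\mathfrak b-\mathfrak a)\Gamma(1+\mathfrak b+\mathfrak a)}{\Gamma(3/2)\Gamma(2\mathfrak b-1)}$, and carrying the prefactors $\tau\beta^{\sqrt{1+2d/\Lambda}}$ and $-\eta\,M\sqrt{2(d-1)/(d+2)}\,\beta^{2q-3}$ through reproduces exactly the stated formula, with the $\mu_n$-perturbations of $A_k,B_k$ collected into $P_n,Q_n=1+\mathcal O_{n\to\infty}(|\mu_n|^{(q-2)\wedge1})$.

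I expect the main obstacle to be precisely this regularized boundary evaluation: extracting the \emph{exact} constant $-2\alpha\sqrt\pi\,c_\infty$ rather than just "some constant", which requires controlling the remainder $\sum_k(\gamma c_k+kc_k-c_\infty k^{-1/2})v^k=o((1-v)^{-1/2})$ — this is where the sharp asymptotic $c_k=c_\infty k^{-3/2}(1+\mathcal O(1/k))$ is essential, and it is reassuring that the power $3/2$ emerges exactly, since it is precisely what makes $\partial_s g(0^+)$ finite and generically nonzero. A secondary point is the $\mu_n$-bookkeeping: one must check that the $\mathcal O(|\mu_n|^{(q-2)\wedge1})$ corrections to $\rho_k$, hence to $A_k$, $B_k$ and to their $k^{3/2}$-limits, are uniform in $k$, so that they may legitimately be pulled out as the scalar factors $P_n$, $Q_n$ — which holds because $\rho_k$ is rational in $\kappa$ with $\kappa$ bounded away from $0$ and the leading asymptotics of $\rho_k$ are $\kappa$-independent. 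Finally, one should record that no Frobenius resonance occurs, which is exactly where the bound $\xi\in(0,1)$ from \eqref{xi} enters.
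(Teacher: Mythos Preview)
Your proposal is correct and follows essentially the same route as the paper: a Frobenius-type power series in $v=\cosh^{-2}(\alpha s)$ with exponents $\mathfrak a$ (homogeneous) and $\mathfrak b$ (particular), the resulting two-term recursion expressed via Pochhammer symbols, Stirling to extract the $k^{-3/2}$ asymptotics of $A_k$ and $B_k$, and an Abelian theorem (your ``Tauberian argument'' is really the Abelian direction) to evaluate $\lim_{s\to0^+}\partial_s g(s)$ as $-2\alpha\sqrt\pi\,c_\infty$. The only cosmetic difference is that the paper writes out $h_n^{(2)}(\cosh^{-2(k+\mathfrak c)})$ directly rather than invoking the hypergeometric formalism, and it packages the $\mu_n$-dependence as $G(k)=G_0(k)\rho_n(k)$ with $\rho_n(k)=1+\mathcal O(|\mu_n|^{(q-2)\wedge1})/k^2$, which is exactly your observation that the $1/k$-coefficient of the recursion ratio is $\kappa$-independent.
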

	
	The fact that the solution space is one-dimensional is not surprising; since the equation is of second order, the $H^2$-requirement imposes a `boundary condition' at infinity, while there is no boundary condition at the origin. For the proof of Lemma \ref{lem8new}, we will impose the condition $\lim_{s\to 0^+} \partial_sg(s)=0$ so that $g$ extends by even reflection to an $H^2$-function on $\R$. This determines the parameter $\tau$ uniquely.
	
	\begin{solution}
		\emph{Step 1.} In this step, we explain the overall idea of the proof and defer the rigorous justification of the manipulations to the next step.
		
		The ansatz \eqref{gn2} for $g$ is a difference of two power series -- a homogeneous and an inhomogeneous formal solution of \eqref{ode1}. The coefficients $\eta$, $A_k$, and $B_k$ can be found by inserting the homogeneous and the inhomogeneous ansatz into the respective equation and comparing coefficients, as we explain now. Here (and only here) we use the symbol $h_n^{(2)}$ not in the operator-theoretic sense but rather to denote the natural differential expression associated to it. For $\mathfrak c\in\{\mathfrak a, \mathfrak b\}$ we find that
		\begin{align*}
			h_n^{(2)}( \cosh^{-2(k+\mathfrak c)}&(\alpha s ))\\=&((1-C|\mu_n|^{(q-2)\wedge 1})(-4(k+\mathfrak c)^2\alpha^2\tanh^2+2(k+\mathfrak c)\alpha^2 \cosh^{-2}+\Lambda+2d)\\&-(q-1)\beta^{q-2}\cosh^{-2})(\alpha s)\cosh^{-2(k+\mathfrak c)}(\alpha s)
			\\=& -(1-C|\mu_n|^{(q-2)\wedge 1})4\alpha^2 (k(k+2\mathfrak c)+\mathfrak c^2-\mathfrak a^2)\cosh^{-2(k+\mathfrak c)}(\alpha s)\\&+\alpha^2\left((1-C|\mu_n|^{(q-2)\wedge 1})(k+\mathfrak c)(4(k+\mathfrak c)+2) -\frac{2q(q-1)}{(q-2)^2}\right)\cosh^{-2(k+1+\mathfrak c)}(\alpha s)\,.
		\end{align*}
		Thus, with $C_k\in\{A_k, B_k\}$, a formal, termwise application of $h_n^{(2)}$ yields
		\begin{align}
			h_n^{(2)}&\left( \sum^\infty_{k=0} C_k \cosh^{-2(k+\mathfrak c)}(\alpha s )\right)=
			\sum_{k=0}^\infty C_k h_n^{(2)}( \cosh^{-2(k+\mathfrak c)}(\alpha s ))\notag\\=&
			\alpha^2(1-C|\mu_n|^{(q-2)\wedge 1})	\Bigg( \sum_{k=0}^\infty\cosh^{-2(k+\mathfrak c)}(\alpha s )\Bigg( -4C_k(k(k+2\mathfrak c)+\mathfrak c^2-\mathfrak a^2)\notag\\&+C_{k-1}\left((k-1+\mathfrak c)(4(k-1+\mathfrak c)+2) -\frac{2q(q-1)}{(1-C|\mu_n|^{(q-2)\wedge 1})(q-2)^2}\right)\Bigg)\Bigg),
			\label{inter}
		\end{align}
		where we performed an index shift in the last step. Here we use the convention $C_{-1}\coloneqq0$. If $\mathfrak c=\mathfrak a$, we set expression \eqref{inter} equal to $0$ to determine $C_k=A_k$ for the homogeneous formal solution by comparing the coefficients of $\cosh^{-2(k+\mathfrak a)}$ for each $k\in\N_0$. Similarly, for the inhomogeneous part, if $\mathfrak c=\mathfrak b$, we set \eqref{inter} equal to $-\eta^{-1}\cosh^{-2 \mathfrak b}$ and compare the coefficients of $\cosh^{-2(k+\mathfrak b)}$ for each $k\in\N_0$ to find $C_k=B_k$. In summary, this leads to the recursion relations 
		\begin{equation}\label{AkBk}
			A_k \coloneqq A_{k-1}G(k)=A_0\prod_{j=1}^k G(j) 
			\qquad\text{and}\qquad
			B_k\coloneqq B_{k-1}G(k+\mathfrak b-\mathfrak a)= B_0\prod_{j=1}^k G(j+\mathfrak b-\mathfrak a)
		\end{equation}
		with
		$$
		G(k) \coloneqq \frac{(1-C|\mu_n|^{(q-2)\wedge1})( k+\mathfrak a-1)(2(k+\mathfrak a)-1)(q-2)^2-(q-1)q}{2(1-C|\mu_n|^{(q-2)\wedge1})k(k+2\mathfrak a)(q-2)^2}\,.
		$$
		Note that the coefficient of $\cosh^{-2k}$ vanishes in case $\mathfrak c=\mathfrak a$, which is the reason for $A_0$ being freely choosable. Put differently, fixing $A_0\coloneqq1$, we obtain an additional degree of freedom $\tau$. In contrast, $B_0$ is determined by the inhomogeneity, or equivalently, if we set $B_0\coloneqq1$, $\eta$ is fixed and given by
		\begin{equation}\label{eta}
			\eta\coloneqq\frac{1}{4(1-C|\mu_n|^{(q-2)\wedge1})\alpha^2 \left( \mathfrak b^2-\mathfrak a^2\right)} \,.
		\end{equation}
		
		However, in the computation \eqref{inter} the interchange of derivative and infinite sum has to be justified. First, note that the infinite sums given in \eqref{gn2} converge absolutely, and thus we can rearrange the terms of both sums. This is a consequence of $A_k\propto k^{-\sfrac{3}{2}}$ and $B_k\propto k^{-\sfrac{3}{2}}$, which we will show in the next step. As the terms in the infinite sums still decay exponentially after differentiation if $s\in(0,\infty)$, the termwise differentiated series converges uniformly in $s$ on every open interval $I\subset (0,\infty)$ that does not contain $0$ at its boundary, and hence derivatives of the proposed $g$ are well-defined. In particular, the formal solution is a classical, smooth solution on $(0,\infty)$. The convergence $A_k,B_k\propto k^{-\sfrac 32}$ implies that $g$ extends continuously to the origin. It follows that
		$$
		(1-C|\mu_n|^{(q-2)\wedge1})(2d+\Lambda)g-(q-1)u^{q-2}g - f^{(2)} \in L^2((0,\infty)) \,,
		$$
		and therefore $g\in H^2((0,\infty))$, as claimed. (Note that $g\in H^2((0,\infty))$ implies that $g'$ extends continuously to the origin, which is not clear from the series representation.)
		
		\emph{Step 2.} We now justify the procedure outlined in Step 1, that is, we determine the asymptotics for $A_k$. The asymptotics for $B_k$ is analogous. We recall that, for $A_0\coloneqq B_0\coloneqq 1$ and $k\in\N$, the coefficients $A_k$ and $B_k$ were defined recursively in \eqref{AkBk} and $\eta$ in \eqref{eta}.
		
		Our analysis will be more precise than what is needed in Step $1$ but will be useful in the proof of the next lemma. We factor
		$$
		G(k) = G_0(k) \rho_n(k)
		\qquad\text{with}\qquad
		G_0(k) \coloneqq \frac{ \left( k+\mathfrak a+\mathfrak b-2\right) \left(k+\mathfrak a-\mathfrak b+\frac{1}{2}\right)}{k ( k+2\mathfrak a) }
		$$
		and note that, with $\mathcal O_{n\to\infty}$ describing a $k$-independent error, 
		$$
		\rho_n(k) = 1+ \frac{\mathcal O_{n\to \infty}(|\mu_n|^{(q-2)\wedge1})}{k^2} \,.
		$$
		Since there is a $C'>0$ such that \[\left|\log\left(1+\frac{\mathcal O_{n\to \infty}(|\mu_n|^{(q-2)\wedge1})}{j^2}\right)\right|\leq  \frac{C' }{j^2}|\mu_n|^{(q-2)\wedge1}\]
		for $n$ large enough and all $j\in\N$, the series $\sum_{j\in\N} \log(\rho_n(j))$ converges absolutely, and hence $\prod_{j\in\N} \rho_n(j)$ converges by continuity of the exponential function.
		It follows that for each $n$, $P_n:= \lim_{k\to\infty} \prod_{j=1}^k \rho_n(j)$ exists, is non-zero, and satisfies $P_n =1 + \mathcal O_{n\to \infty}(|\mu_n|^{(q-2)\wedge1})$. Moreover, for any $k$,
		$$
		\prod_{j=1}^k \rho_n(j) = P_n\left( 1 + \frac{\mathcal O_{n\to \infty}(|\mu_n|^{(q-2)\wedge1})}{k} \right).
		$$
		Concerning the main term, we note that
		\begin{equation}\label{Aknew}
			\prod_{j=1}^k G_0(j)=\frac{(\mathfrak a+\mathfrak b-1)_k}{(2\mathfrak a+1)_k} \frac{\left(\mathfrak a-\mathfrak b+\frac{3}{2}\right)_k}{\left(1\right)_k} \,,
		\end{equation}
		with $(\ \cdot \ )_k=\Gamma(\ \cdot+k) (\Gamma(\ \cdot\ ))^{-1}$ being the Pochhammer symbol for $k\in\N$. The crucial ingredient will be the following asymptotics for ratios of two Gamma functions
		\begin{equation}\label{Poch}
			k^{d_1-d_2}\frac{\Gamma(k+d_2)}{\Gamma(k+d_1)}=  1+\frac{(d_2-d_1)(d_1+d_2-1)}{2k}+\mathcal O_{k\to\infty}(k^{-2})\,,
			\qquad d_1,d_2>0\,,
		\end{equation}
		which can be found by Stirling's approximation. Applying \eqref{Poch} to \eqref{Aknew}, we obtain
		\begin{equation*}
			\prod_{j=1}^k G_0(j)= k^{-\sfrac{3}{2}}\frac{\Gamma(2\mathfrak a+1)}{\Gamma(\mathfrak a+\mathfrak b-1)}  \frac{\Gamma\left(1\right)}{\Gamma\left(\frac{3}{2}-\mathfrak b+ \mathfrak a \right)}+\mathcal O_{k\to\infty}(k^{-\sfrac{5}{2}}) \,. 
		\end{equation*}
		Combining this with the behavior of $\rho_n(k)$ yields
		\begin{equation}
			A_k = k^{-\sfrac{3}{2}}\frac{\Gamma(2\mathfrak a+1)}{\Gamma(\mathfrak a+\mathfrak b-1)}  \frac{\Gamma\left(1\right)}{\Gamma\left(\frac{3}{2}-\mathfrak b+\mathfrak a \right)} P_n + \mathcal O_{k\to\infty}(k^{-\sfrac{5}{2}}) \label{asympA}
		\end{equation}
		with an error that is uniform in $n$. The asymptotics for $B_k$ is similar but with a shift by $\mathfrak b-\mathfrak a$ in every Gamma function and with a possibly different constant $Q_n$ that also satisfies $Q_n = 1 + \mathcal O_{n\to \infty}(|\mu_n|^{(q-2)\wedge1})$. Using this behavior of the coefficients $A_k$ and $B_k$, the manipulations in Step 1 can be justified, and thus we proved the assertion of the lemma concerning the solution.
		
		\emph{Step 3.} We finally compute $\lim_{s\to 0^+} \partial_s g(s)$. First, we note that for $s>0$ we can differentiate termwise the series defining $g$ and obtain
		\begin{align*}
			\partial_s g & = -2 \alpha \tau u^{\sqrt{1+\frac{2d}{\Lambda}}} \sum_{k=0}^\infty k A_k \cosh^{-2k}(\alpha\  \cdot\ ) \tanh(\alpha\ \cdot\ ) \! +\! 2\alpha \eta  f^{(2)} \sum_{k=0}^\infty k B_k \cosh^{-2k}(\alpha \ \cdot\ ) \tanh(\alpha\ \cdot\ ) \\
			& \quad + h \,,
		\end{align*}
		where $h$ involves a series that converges absolutely on all of $[0,\infty)$ and satisfies $\lim_{s\to 0^+} h(s)=0$. Since $u$ and $f^{(2)}$ are continuous at the origin, the claimed formula will follow from the fact that
		\begin{equation}
			\label{eq:abel1}
			\lim_{\sigma\to 0^+} \tanh(\sigma) \sum_{k=0}^\infty D_k \cosh^{-2k}(\sigma) = \sqrt\pi \lim_{k\to\infty} k^{\sfrac12} D_k \,,
		\end{equation}
		provided the $D_k$ are non-negative, and the limit on the right side exists. (Indeed, we apply this with $D_k\in\{kA_k,kB_k\}$, recalling the asymptotics of $A_k$ and $B_k$ from \eqref{asympA}.)
		
		To prove \eqref{eq:abel1}, we set $\cosh^{-2}(\sigma) \eqqcolon e^{-\zeta}$ and notice that the assertion is equivalent to
		$$
		\lim_{\zeta\to 0^+} \zeta^{1/2} \sum_{k=0}^\infty D_k e^{-k\zeta} = \sqrt\pi \lim_{k\to\infty} k^{\sfrac12} D_k \,.
		$$
		The latter is a consequence of a well-known Abelian theorem corresponding to the measure $\mu=\sum_{k=0}^\infty D_k \delta_k$ on $[0,\infty)$, noting that
		$$
		\mu([0,a)) = \sum_{k<a} D_k \sim 2 a^{1/2} \lim_{k\to\infty} k^{\sfrac12} D_k
		\qquad\text{as}\ a\to\infty \,.
		$$
		A simple proof of the Abelian theorem (which is an application of the dominated convergence theorem) can be found, for instance, in \cite[Theorem 10.2]{Si}.
	\end{solution}
	
	Everything is now in place to compute $E^{(2)}$ and thus to complete the proof of Lemma \ref{lem8new}.
	
	\begin{proof}[Proof of Lemma \ref{lem8new}]
		Given the notation from Lemma \ref{lem6}, we fix the value $\tau=\tau_n$ in such a way that $\lim_{s\to 0^+} \partial_s g(s)=0$, for then the even extension of $g$ is the unique $H^2(\R)$-solution of $h_n^{(2)}g=f^{(2)}$. This value is given by
		$$
		\tau_n = \sqrt{\frac{2(d-1)}{d+2}}M \eta\beta^{\xi(q-2)} \frac{2\Gamma(\mathfrak a + \mathfrak b-1)\Gamma\left(\mathfrak a-\mathfrak b+\frac{3}{2}\right)\Gamma(\mathfrak a + \mathfrak b+1)\Gamma(\mathfrak b- \mathfrak a +1)}{\sqrt{\pi}\Gamma(2\mathfrak b-1)\Gamma(2\mathfrak a+1)} \frac{Q_n}{P_n} \,.
		$$
		Since $Q_n/P_n = 1+ \mathcal O_{n\to \infty}(|\mu_n|^{(q-2)\wedge1})$ and $\eta = (4\alpha^2(\mathfrak b^2-\mathfrak a^2))^{-1}(1+ \mathcal O_{n\to \infty}(|\mu_n|^{(q-2)\wedge1}))$, we obtain the asymptotic behavior
		\begin{align*}
			\tau_n & = \sqrt{\frac{2(d-1)}{d+2}} \frac{M}{4\alpha^2(\mathfrak b^2 - \mathfrak a^2)} \beta^{\xi(q-2)} \frac{2\Gamma(\mathfrak a + \mathfrak b-1)\Gamma\left(\mathfrak a-\mathfrak b+\frac{3}{2}\right)\Gamma(\mathfrak a + \mathfrak b+1)\Gamma(\mathfrak b- \mathfrak a +1)}{\sqrt{\pi}\Gamma(2\mathfrak b-1)\Gamma(2\mathfrak a+1)} \notag \\
			& \quad \times \left( 1+ \mathcal O_{n\to \infty}(|\mu_n|^{(q-2)\wedge1}) \right).
		\end{align*}	
		
		We write
		\begin{equation*}
			\tau_n \eqqcolon \sqrt{\frac{2(d-1)}{d+2}}M \eta\beta^{\xi(q-2)} \tilde\tau_n\,.
		\end{equation*}
		Inserting \eqref{gn2} from Lemma \ref{lem6} and $\tau_n$ into the expression for $E^{(2)}$ gives
		\begin{alignat*}{2}
			E^{(2)}&=&&-\int_{\R} g  f^{(2)}\ \mathrm{d}s
			\\&=&&-\frac{\beta^{3q-4}}{\alpha}\frac{|\mathbb S^{d-1}|}{d^2}q(q-1)^2\frac{(d-1)}{(d+2)}\alpha^2\eta
			\sum_{k=0}^\infty \left(\tilde \tau_n A_k \frac{\Gamma\left(k+\mathfrak a+\mathfrak b\right)\sqrt{\pi}}{\Gamma\left(k+\mathfrak a+\mathfrak b+\frac{1}{2}\right)}-  B_k \frac{\Gamma\left(k+2\mathfrak b\right)\sqrt{\pi}}{\Gamma\left(k+2\mathfrak b+\frac{1}{2}\right)}\right)
			\\&=&&-\frac{\beta^{3q-4}}{\alpha}\frac{|\mathbb S^{d-1}|}{4d^2}\frac{\Gamma \left(\frac{3q-4}{q-2}\right)\sqrt \pi}{\Gamma \left(\frac{3q-4}{q-2}+\frac{1}{2}\right)}q(q-1)(q-2)\frac{(d-1)}{(d+2)}
			\frac{1}{P(-1)}\sum_{k=0}^\infty \left(P(k-\xi)- P(k)\right)\\& &&+ \mathcal O_{n\to\infty}(|\mu_n|^{(q-2)\wedge1})\,.
		\end{alignat*}
		In the penultimate step, we interchanged integral and infinite sum, as the summands in the respective infinite sum have the same sign, and applied the second identity from \eqref{ident}. Due to the asymptotics \eqref{Poch} and \eqref{asympA} and their counterparts in the inhomogeneuous setting, the infinite sums are absolutely summable, and thus we are allowed to rearrange the sums.
	\end{proof}
	
	
	\subsection{Convexity of $P$}
	
	We recall that the function $P:[-1,\infty)\to\R$ was defined in \eqref{eq:defp}. The following property is used in the proof of Proposition \ref{prop3}.
	
	\begin{lemma}[Strict convexity of $P$]\label{lem9}
		Let $d>2, 2<q<2^*$ or $d=2$, $2.8< q <2^*$. The function $P$ is strictly convex on the interval $[-1,\infty)$.
	\end{lemma}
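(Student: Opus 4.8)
The plan is to exhibit $P$ as the $x$-th moment of a \emph{positive} measure on $(0,1)$; such functions are automatically log-convex, and hence strictly convex, on their whole interval of definition. The decisive preliminary step is to check that the hypotheses are tailored precisely to the bound $\xi=\mathfrak b-\mathfrak a>\tfrac12$. For $d\geq 3$ this is immediate from $\xi\in\bigl(\tfrac{d-2}{d-1},1\bigr)$ (see \eqref{xi}) together with $\tfrac{d-2}{d-1}\geq\tfrac12$. For $d=2$ one uses the strict monotonicity of $\xi$ in $q$ recorded above together with the elementary identity $\xi=\tfrac12$ at $q=\tfrac{14}5$: indeed $\mathfrak a=\tfrac{11}4$ and $\mathfrak b=\tfrac{13}4$ there, by \eqref{xiab} and $\Lambda=\Lambda_{FS}$, so that $\xi>\tfrac12$ for $q>2.8$.

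Next I would write $P(x)=\prod_{i=1}^{3}\Gamma(x+a_i)/\Gamma(x+b_i)$ with $(a_1,b_1)=\bigl(\tfrac32,\mathfrak b-\mathfrak a+1\bigr)$, $(a_2,b_2)=(2\mathfrak b-1,\mathfrak b+\mathfrak a+1)$, and $(a_3,b_3)=\bigl(2\mathfrak b,2\mathfrak b+\tfrac12\bigr)$, which one reads off directly from \eqref{eq:defp}. The point is that all three gaps are strictly positive: $b_1-a_1=\mathfrak b-\mathfrak a-\tfrac12>0$ by the previous paragraph, $b_2-a_2=2-(\mathfrak b-\mathfrak a)>1>0$, and $b_3-a_3=\tfrac12>0$; moreover $a_i>0$ and $x+a_i\geq x+\tfrac32\geq\tfrac12$ for $x\geq-1$. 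Applying the Beta identity $\Gamma(x+a)/\Gamma(x+b)=\Gamma(b-a)^{-1}\int_0^1 t^{x+a-1}(1-t)^{b-a-1}\,\mathrm{d}t$ (valid for $0<a<b$ and $x+a>0$) to each factor gives
\[
P(x)=c\int_{(0,1)^3}(t_1t_2t_3)^{x}\,w(t_1,t_2,t_3)\,\mathrm{d}t_1\,\mathrm{d}t_2\,\mathrm{d}t_3\,,\qquad w(t)\coloneqq\prod_{i=1}^{3}t_i^{a_i-1}(1-t_i)^{b_i-a_i-1}\,,
\]
where $c=\prod_{i}\Gamma(b_i-a_i)^{-1}>0$ and $w>0$ throughout $(0,1)^3$.

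I would then differentiate this representation twice in $x$ — legitimate by dominated convergence, using $x+a_i>0$ (locally uniformly for $x\geq-1$) to control the integrand near $t_i=0$ and $b_i-a_i>0$ to control it near $t_i=1$ — to obtain
\[
P''(x)=c\int_{(0,1)^3}\bigl(\log(t_1t_2t_3)\bigr)^2(t_1t_2t_3)^{x}\,w(t)\,\mathrm{d}t_1\,\mathrm{d}t_2\,\mathrm{d}t_3>0\qquad(x\geq-1)\,,
\]
the integrand being strictly positive on $(0,1)^3$. Hence $P$ is strictly convex on $[-1,\infty)$.

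The only substantive point is the very first step: recognising that the stated hypotheses on $(q,d)$ encode exactly the inequality $\mathfrak b-\mathfrak a>\tfrac12$, which is what makes the first factor in the factorisation a genuine Beta integral. (When $d=2$ and $q\leq2.8$ one has $\mathfrak b-\mathfrak a\leq\tfrac12$, this factorisation — and, presumably, convexity of $P$ itself — fails, and that is precisely why this parameter range is treated instead by the cruder operator-norm bound in Case~2 of the proof of Proposition~\ref{prop3}.) Everything else is routine, the only minor care being the integrability needed for differentiation under the integral sign.
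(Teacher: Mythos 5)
Your proof is correct, and it takes a genuinely different route from the paper while resting on the same two structural observations: the pairing of the three Gamma-ratios and the bound $\xi=\mathfrak b-\mathfrak a>\tfrac12$ (your verification of the latter is careful and correct, including the clean calculation $\mathfrak a=\tfrac{11}{4}$, $\mathfrak b=\tfrac{13}{4}$ at $d=2$, $q=\tfrac{14}{5}$). The paper uses exactly the same factorisation $P=T_1T_2T_3$ with $T_j(x)=\Gamma(x+\epsilon_1^j)/\Gamma(x+\epsilon_2^j)$ and the same inequalities $\tfrac32\leq\epsilon_1^j<\epsilon_2^j$, but then argues via the sign pattern $(-1)^iT_j^{(i)}>0$ for $i=0,1,2$, proved through the strict monotonicity of the Digamma and Trigamma functions, with the general Leibniz rule propagating the sign pattern to the product. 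You instead invoke the Euler--Beta integral to realise each $T_j$ --- hence $P$ --- as the Laplace transform (in $\log t$) of a strictly positive density on $(0,1)^3$, so that $P''$ has a manifestly positive integrand. Your route buys a stronger conclusion, namely strict log-convexity of $P$, and is conceptually cleaner: it explains the convexity as a Bernstein/Hausdorff-moment phenomenon rather than verifying it derivative by derivative. The paper's route is slightly more elementary in that it needs only the monotonicity of $\Psi$ and $\Psi_1$ rather than the Beta-integral representation and the differentiation under the integral sign that it entails; in a sense the two arguments are dual to one another, the Digamma/Trigamma sign pattern being the finite-order shadow of complete monotonicity, which your integral representation establishes directly. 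Both hinge on the same observation that the hypotheses on $(q,d)$ are precisely what make $b_1-a_1=\xi-\tfrac12$ positive, which you articulate explicitly and which the paper leaves slightly implicit in the remark ``as $2^{-1}<\xi<1$ for the given parameter range''.
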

	
	The lemma will not cover $d=2$, $2<q\leq2.8$. Indeed, numerical computations suggest that for $d=2$ and $q$ close to $2$, $P$ fails to be convex.
	
	\begin{solution} We will show strict convexity by proving that the second derivative is positive. The idea is to split $P$ into three factors, analyze their sign and that of their first and second derivatives, and conclude that the same pattern of signs can be observed for $P$. Be aware that $P$ is a well-defined, smooth function on an open, $(q,d)$-dependent set containing $[-1,\infty)$, as Gamma functions are positive, smooth functions on the positive real axis. In particular, we can differentiate at $-1$. Before investigating the single factors, let us show how $P$ inherits the property that differentiation alters the sign from its constituents. 
		
		\emph{Step 1.} 
		Let $U\subset\R$ be an open subset and $T_j:U\to\R$, $j\in\{1,\dots,J\}$, $I$-times differentiable functions for $I\in\N_0$ and $J\in\N$ with $(-1)^iT_j^{(i)}>0,$ 
		for all $i\in\{1,\dots,I\}$ and $j\in \{0,\dots, J\}$. Here $(\ \cdot\ )^{(i)}$ denotes the $i$-th derivative. Applying the general Leibniz rule, we observe that
		\[(-1)^I\left(\prod_{j=1}^J T_j\right)^{(I)}=\sum_{m_1+m_2+\dots+m_J=I}\binom{I}{m_1,m_2,\dots,m_J} \prod_{j=1}^J (-1)^{m_j}T_j^{(m_j)}>0\] 
		since every factor $(-1)^{m_j}T_j^{(m_j)}>0$.
		
		\emph{Step 2.}
		We are now going to apply Step 1 with $I=2$ and $J=3$ to the smooth function $P$. To this end, we write 
		\[P(x)=\prod_{j=1}^{3}T_j(x)\qquad \text{ with }\qquad T_j:\left(-\frac{3}{2},\infty\right)\to \R\,,\, T_j(x)\coloneqq \frac{\Gamma(x+\epsilon_1^j)}{\Gamma(x+\epsilon_2^j)}\,,\] $j\in\{1,2,3\}$, where we denote 
		\[\begin{matrix*}[l]
			\epsilon_1^1\coloneqq\frac{3}{2}\,, & \epsilon_1^2\coloneqq2\mathfrak b-1\,, & \epsilon_1^3\coloneqq2\mathfrak b\,,\\
			\epsilon_2^1\coloneqq\mathfrak b-\mathfrak a +1\,, & 
			\epsilon_2^2\coloneqq\mathfrak b+\mathfrak a+1\,, & 
			\epsilon_2^3\coloneqq 2\mathfrak b +\frac{1}{2}\,.
		\end{matrix*}\]
		Note that $\frac{3}{2}\leq \epsilon_1^j< \epsilon_2^j$ for all $j\in\{1,2,3\}$ as $2^{-1}<\xi<1$ for the given parameter range of $q$ and $d$; see \eqref{xi}. In particular, $T_j$ is a well-defined, positive function on $\left(-\frac{3}{2},\infty\right)$ since the Gamma functions are only evaluated at positive entries.
		
		Next, we will use monotonicity of the Digamma function $\Psi(y)\coloneqq \partial_y(\log (\Gamma(y)))$, $y>0$,  and its derivative, the Trigamma function $\Psi_1$. The monotonicity behavior can be deduced in an elementary way from an integral formula for the Digamma function \cite[12.3]{WhWa}, for instance.
		As $\Psi$ is strictly monotonically  increasing on the positive real axis, the second inequality follows through
		\[\partial_xT_j(x)=T_j(x)\left(\Psi(x+\epsilon^j_1)-\Psi(x+\epsilon^j_2)\right)<0\]
		for $x>-\frac{3}{2}$ and $j\in\{1,2,3\}$. 
		Similarly, we compute 	\[\partial^2_x T_j(x)=T_j(x)\left(\left(\Psi(x+\epsilon^j_1)-\Psi(x+\epsilon^j_2)\right)^2+\left(\Psi_1(x+\epsilon^j_1)-\Psi_1(x+\epsilon^j_2)\right)\right)>0\]
		for $x>-\frac{3}{2}$ and $j\in\{1,2,3\}$ using that $\Psi_1$ is strictly monotonically decreasing on the positive real axis. Therefore, we can apply Step 1 to obtain $\partial_x^2 P>0$.
	\end{solution}
	

	\bibliographystyle{amsalpha}

\begin{thebibliography}{35}
		
		\bibitem{Au} T.~Aubin, \textit{Probl\`emes isop\'erim\'etriques et espaces de Sobolev}. J. Differential Geom. \textbf{11} (1976), 573--598.
		
		\bibitem{BiEg} G.~Bianchi, H.~Egnell, \textit{A note on the Sobolev inequality}. J. Funct. Anal. \textbf{100} (1991), 18--24.
		
		\bibitem{BrLi} H.~Brezis, E.~H.~Lieb, \textit{Sobolev inequalities with remainder terms}. J. Funct. Anal. \textbf{62} (1985), 73--86.
		
		\bibitem{BrDoSi} G.~Brigati, J.~Dolbeault, N.~Simonov, \textit{Logarithmic Sobolev and interpolation inequalities on the sphere: constructive stability results}. Preprint (2022), 	arXiv:2211.13180.	
		
		\bibitem{CaKoNi} L.~Caffarelli, R.~Kohn, L.~Nirenberg, \textit{First order interpolation inequalities with weights}. Compos. Math. \textbf{53(3)} (1984), 259--275.
		
		\bibitem{CaWa} F.~Catrina, Z.~Wang, \textit{On the Caffarelli-Kohn-Nirenberg inequalities: sharp constants, existence (and nonexistence) and symmetry of extremal functions}. Communications on Pure and Applied Mathematics \textbf{LIV} (2001), 0229--0258. 
		
		\bibitem{ChFrWe} S.~Chen, R.~L.~Frank, T.~Weth, \textit{Remainder terms in the fractional Sobolev inequality}. Indiana Univ. Math. J. \textbf{62(4)} (2012), 1381--1397.
		
		\bibitem{ChLuTa} L.~Chen, G.~Lu, H.~Tang, \textit{Stability of Hardy-Littlewood-Sobolev inequalities with explicit lower bounds}. Preprint (2023), arXiv:2301.04097.
		
		\bibitem{ChCh} K.~S.~Chou, C.~W.~Chu, \textit{On the best constant for a weighted Sobolev--Hardy inequality}. J. Lond. Math. Soc. \textbf{48(2)} (1993), 137--151. 
		
		\bibitem{CiFe} A.~Cianchi, A.~Ferone, \textit{Hardy inequalities with non-standard remainder terms}. Ann. Inst. H. Poincar\'e C Anal. Non Lin\'eaire \textbf{25(5)} (2008), 889--906.
		
		\bibitem{DeTi2} S.~Deng, X.~Tian, \textit{On the stability of Caffarelli-Kohn-Nirenberg inequality in $\R^2$}. Preprint (2023), arXiv:2308.04111.
		
		\bibitem{DoEsFiFrLo} J.~Dolbeault, M.~J.~Esteban, A.~Figalli, R.~L.~Frank, M.~Loss, \textit{Sharp stability for Sobolev and log-Sobolev inequalities, with optimal dimensional dependence}. Preprint (2022), arXiv: 2209.08651.
		
		\bibitem{DoEsLo} J.~Dolbeault, M.~J.~Esteban, M.~Loss, \textit{Symmetry of extremals of functional inequalities via spectral estimates for linear operators}. J. Math. Phys. \textbf{53} (2012), 095204.
		
		\bibitem{DoEsLo2} J.~Dolbeault, M.~J.~Esteban, M.~Loss, \textit{Rigidity versus symmetry breaking via nonlinear flows on cylinders and Euclidean spaces}. Invent. Math. \textbf{206} (2016), 397--440.
		
		\bibitem{DoEsLoTa} J.~Dolbeault, M.~J.~Esteban, M.~Loss, G.~Tarantello, \textit{On the Symmetry of Extremals for the Caffarelli-Kohn-Nirenberg inequalities}. Adv. Nonlinear Stud. \textbf{9} (2009), 713--726.
		
		\bibitem{DoEsTa} J.~Dolbeault, M.~J.~Esteban, G.~Tarantello, \textit{The role of Onofri type inequalities in the symmetry properties of extremals for Caffarelli-Kohn-Nirenberg inequalities, in two space dimensions}. Ann. Sc. Norm. Super. Pisa Cl. Sci. \textbf{5(7)} (2008), 313--341.
		
		\bibitem{EnNeSp} M.~Engelstein, R.~Neumayer, L.~Spolaor, \textit{Quantitative stability for minimizing Yamabe metrics}. Trans. Amer. Math. Soc. Ser. B \textbf{9} (2022), 395--414.
		
		\bibitem{FeSc} V.~Felli, M.~Schneider, \textit{Perturbation results of the critical elliptic equations of Caffarelli-Kohn-Nirenberg type}. J. Differential Equations \textbf{191(1)} (2003), 121--142.
		
		\bibitem{Fr} R.~L.~Frank, \textit{Degenerate stability of some Sobolev inequalities}. Ann. Inst. H. Poincaré Anal. Non Linéaire \textbf{39(6)} (2022), 1459--1484. 
		
		\bibitem{Fr2} R.~L.~Frank, \textit{The sharp Sobolev inequality and its stability: an introduction}. Preprint (2023), arXiv:2304.03115.
		
		\bibitem{FrLaWe} R.~L.~Frank, A.~Laptev, T.~Weidl, \textit{Schrödinger Operators: Eigenvalues and Lieb--Thirring Inequalities, In: Cambridge Studies in Advanced Mathematics 200}. Cambridge University Press (2022).
		
		\bibitem{GrRy} I.~S.~Gradshteyn, I.~M.~Ryzhik, \textit{Table of Integrals, Series, and Products. 7th Edition}. Academic Press (2007).
		
		\bibitem{Ho} T.~Horiuchi, \textit{Best constant in weighted Sobolev inequality with weights being powers of distance from the origin}. J. Inequal. Appl. \textbf{1} (1997), 275--292.
		
		\bibitem{Ko} T.~ K\"onig, \textit{On the sharp constant in the Bianchi--Egnell stability inequality}. Bull. London Math. Soc. \textbf{55 (4)} (2023), 2070--2075.
		
		\bibitem{Ko2} T.~ K\"onig, \textit{Stability for the Sobolev inequality: existence of a minimizer}. Preprint (2022), arXiv:2211.14185.
		
		\bibitem{Lieb} E. H.~Lieb, \textit{Sharp constants in the Hardy-Littlewood-Sobolev and related inequalities}. Ann. of Math. \textbf{118(2)} (1983), 349--374.
		
		\bibitem{LiWa} C.-S.~Lin, Z.-Q.~Wang, \textit{Symmetry of extremal functions for the Caffarelli-Kohn-Nirenberg
			inequalities}. Proc. Amer. Math. Soc. \textbf{132(6)} (2004), 1685--1691. 
		
		\bibitem{Li} P.-L.~Lions, \textit{Concentration compactness principle in the calculus of variations. The locally compact case, part 2}. Ann.~Inst.~H.~Poincar\'e~Anal.~Non~Lin\'eaire \textbf{1(4)} (1984), 223--283. 
		
		\bibitem{RaSmWi} V.~R\u{a}dulescu, D.~Smets, M.~Willem, \textit{Hardy--Sobolev inequalities with remainder terms.} Topol. Methods Nonlinear Anal. \textbf{20} (2002), 145--149.
		
		\bibitem{Sc} R.~M.~Schoen, \textit{Variational theory for the total scalar curvature functional for Riemannian metrics and related topics. In: Topics in calculus of variations}. Lect. Notes Math. \textbf{1365} (1989), 120--154.
		
		\bibitem{Si} B.~Simon, \textit{Functional integration and quantum physics}. Pure Appl. Math., 86. Academic Press, (1979).
		
		\bibitem{SmWi} D.~Smets, M.~Willem, \textit{Partial symmetry and asymptotic behavior for some elliptic variational problems}. Calc. Var. Partial Differerential Equations \textbf{18} (2003), 57--75. 
		
		\bibitem{StWe} E.~M.~Stein, G.~Weiss, \textit{Introduction to Fourier Analysis on Euclidean Spaces. Sixth Printing}. Princeton University Press (1990).
		
		\bibitem{Ta} G.~Talenti, \textit{Best constant in Sobolev inequality}. Ann. Mat. Pura. Appl. \textbf{110(4)} (1976), 353--372. 
		
		\bibitem{WeWu} J.~Wei, Y.~Wu, \textit{On the stability of the Caffarelli--Kohn--Nirenberg inequality}. Math. Ann. \textbf{384} (2022), 1509--1546.
		
		\bibitem{WhWa} E.~T.~Whittaker, G.~N.~Watson, \textit{A Course on Modern Analysis. An Introduction to the General Theory of Infinite Processes and of Analytical Functions; with an Account of the Principal Transcendental Functions. 4th Edition}. Cambridge University Press (1927).
		
		
		
	\end{thebibliography}

\end{document}